\newcommand {\citeAY} [1] {\citeNP {#1}}%
\newcommand {\citeAPY}[1] {\citeN  {#1}}%
\renewcommand {\showoriginalref}[1]{}
\renewcommand {\showCODEN}[1]{}
\renewcommand {\showISSN}[1]{}
\renewcommand {\showMR}[3]{}
\newcommand\eq[1] {(\ref{#1})}
\newcommand\sect[1] {\ref{sec:#1}}
\newcommand\labsect[1] {\label{sec:#1}}
\newcommand{\bfm}[1]{\mbox{\boldmath ${#1}$}}
\newcommand{\beqa}{\begin{eqnarray}}
\newcommand{\eeqa}[1]{\label{#1}\end{eqnarray}}
\newcommand{\beq}{\begin{equation}}
\newcommand{\eeq}[1]{\label{#1}\end{equation}}
\newcommand{\bbm}{\begin{bmatrix}}
\newcommand{\ebm}{\end{bmatrix}}
\newcommand{\bpm}{\begin{pmatrix}}
\newcommand{\epm}{\end{pmatrix}}
\newcommand{\Divop}{\mathrm{div \,}}
\newcommand{\Curlop}{\mathrm{curl\,}}
\newcommand{\Real}{\mathop{\rm Re}\nolimits}
\newcommand{\Imag}{\mathop{\rm Im}\nolimits}
\newtheorem{Thm}{Theorem}
\newtheorem{Def}[Thm]{Definition}
\newtheorem{Pro}[Thm]{Proposition}
\newtheorem{Cor}[Thm]{Corollary}
\newtheorem{Rem}[Thm]{Remark}
\newcommand{\dint}{\mathrm{d}}
\def\Bpsi{{\psi}}
\def\Bphi{{ \phi}}
\newcommand{\Gve}{\varepsilon}
\newcommand{\Gm}{\mu}
\newcommand{\Gs}{\sigma}
\newcommand{\Go}{\omega}
\newcommand{\BGve}{\bfm\varepsilon}
\newcommand{\BGf}{\bfm\phi}
\newcommand{\BGk}{\bfm\kappa}
\newcommand{\BGm}{\bfm\mu}
\newcommand{\BGr}{\bfm\rho}
\newcommand{\BGy}{\bfm\psi}
\newcommand{\BGG}{\bfm\Gamma}
\newcommand{\BGY}{\bfm\Psi}
\newcommand{\CU}{{\cal U}}
\newcommand{\BCI}{{\bfm{\cal I}}}
\newcommand{\BCT}{{\bfm{\cal T}}}
\def\dd{{\rm d}}
\def\Be{{\bf e}}
\def\Bf{{\bf f}}
\def\Bg{{\bf g}}
\def\Bm{{\bf m}}
\def\Bn{{\bf n}}
\def\Bu{{\bf u}}
\def\Bv{{\bf v}}
\def\Bw{{\bf w}}
\def\Bx{{\bf x}}
\def\By{{\bf y}}
\def\Bz{{\bf z}}
\def\BA{{\bf A}}
\def\BB{{\bf B}}
\def\BC{{\bf C}}
\def\BE{{\bf E}}
\def\BF{{\bf F}}
\def\BH{{\bf H}}
\def\BI{{\bf I}}
\def\BJ{{\bf J}}
\def\BK{{\bf K}}
\def\BL{{\bf L}}
\def\BM{{\bf M}}
\def\BN{{\bf N}}
\def\BP{{\bf P}}
\def\BQ{{\bf Q}}
\def\BT{{\bf T}}
\def\BV{{\bf V}}
\def\BY{{\bf Y}}
\def\BZ{{\bf Z}}
\def \ep {\varepsilon}
\def \ba {\begin{array}}
\def \ea {\end{array}}
\def \refe #1.{(\ref{#1})}
\def \reff #1.{figure~\ref{#1}}
\def \refs #1.{section~\ref{#1}}
\def \refss #1.{subsection~\ref{#1}}
\def \refD #1.{Definition~\ref{#1}}
\def \refT #1.{Theorem~\ref{#1}}
\def \refL #1.{Lemma~\ref{#1}}
\def \refC #1.{Corollary~\ref{#1}}
\def \refP #1.{Proposition~\ref{#1}}
\def \refR #1.{Remark~\ref{#1}}
\def \refE #1.{Example~\ref{#1}}
\def \refN #1.{Notation~\ref{#1}}
\newcommand{\bbR}{{\mathbb{ R}}}
\newcommand{\bbC}{{\mathbb{C}}}
\begin{document}

\vspace{-1in}
\title{Analyticity of the Dirichlet-to-Neumann map for the time-harmonic Maxwell's equations}%
\label{chap:analyticity}
\author{Maxence Cassier$^{*}$, Aaron Welters$^{**}$ and Graeme W. Milton$^{*}$}
\date{$^{*}$\small{Department of Mathematics, University of Utah, Salt Lake City, UT 84112, USA} \\
$^{**}$\small{Department of Mathematical Sciences, Florida Institute of Technology, Melbourne, FL 32901, USA}\\
\small{emails: cassier@math.utah.edu, awelters@fit.edu, milton@math.utah.edu}}
\vspace{2ex}
%Remember to remove this when giving back to Milton
\maketitle
%******************************
\begin{abstract}
%******************************
In this chapter we derive the analyticity properties of the electromagnetic Dirichlet-to-Neu\-mann map%
\index{Dirichlet-to-Neumann map!analytic properties}
for the time-harmonic Maxwell's equations%
\index{Maxwell's equations}
for passive linear multicomponent media. Moreover, we discuss the connection of this map to Herglotz functions%
\index{Herglotz functions}
for isotropic and anisotropic multicomponent composites.
\end{abstract}
\vspace{3ex}
\noindent
\begin{mbox}
{\bf Key words:} multicomponent media, electromagnetic Dirichlet-to-Neumann map, analytic properties, Herglotz functions
\end{mbox}
\vspace{3ex}
\vskip2mm

%DellAntonio:1986:ATO
%%%%%%%%%%%%%%%%%%%%%%%%%%%%%%%%%%%%%%%%%%%%%%%%%%%%%%%%%%%%%%%%%%%%%%%%
\section{Introduction}
\setcounter{equation}{0}
%%%%%%%%%%%%%%%%%%%%%%%%%%%%%%%%%%%%%%%%%%%%%%%%%%%%%%%%%%%%%%%%%%%
In this chapter, we study the analytic properties of the electromagnetic ``Dirichlet-to-Neumann'' (DtN) map for a composite material. Using passive linear multicomponent media, we will
prove that this DtN map is an analytic function of the  dielectric permittivities and magnetic permeabilities (multiplied by the frequency $\omega$) which characterize each phase. More specifically, it belongs to a special class of
functions known as Herglotz functions. In that sense, this chapter is highly connected to the previous one by Graeme Milton since both are proving analyticity properties on the DtN map, but with different methods. In \cite[Chapter 3]{Graeme:2016:ETC}, these analyticity properties are derived
by using the theory of composite materials, whereas in this chapter they are proved via spectral theory in the usual functional framework associated with the time-harmonic Maxwell's equations. Maxwell's equations at fixed frequency $\Go$ involve the electric permittivity $\Gve(\Bx,\Go)$ (also called the dielectric constant if measured relative to the
permittivity of the vacuum) and the magnetic permeability $\Gm(\Bx,\Go)$.
The approach taken in the current chapter has the important advantage of being applicable to bodies where the moduli $\omega\Gve(\Bx,\Go)$
and $\omega\Gm(\Bx,\Go)$ are not piecewise constant but instead vary smoothly (or not) with position. In this case we establish (in Subsection \sect{subsec.ext}) the
Herglotz properties of the Dirichlet-to-Neumann map, as a function of frequency, assuming the material is passive at each point $\Bx$, i.e., that $\omega\Gve(\Bx,\Go)$
and $\omega\Gm(\Bx,\Go)$ are Herglotz functions of the frequency $\omega$.

The use of theory of Herglotz functions in electromagnetism and in the theory of composites has  many important impacts and
 consequences (Bergman \citeyearNP{Bergman:1978:DCC}, \citeyearNP{Bergman:1980:ESM}, \citeyearNP{Bergman:1982:RBC}; Milton \citeyearNP{Milton:1980:BCD}, \citeyearNP{Milton:1981:BCP}, \citeyearNP{Milton:1981:BTO}, \citeyearNP{Milton:2002:TOC}; \citeAY{Golden:1983:BEP}; \citeAY{DellAntonio:1986:ATO}; \citeAY{Bruno:1991:ECS};  Lipton \citeyearNP{Lipton:2000:OBE}, \citeyearNP{Lipton:2001:OIG}; \citeAY{Gustafsson:2010:SRP}; \citeAY{Bernland:2011:SRC}; \citeAY{Liu:2013:CPP}; \citeAY{Welters:2014:SLL}) especially in developing bounds on certain physical quantities. Based on this and the work of
\citeAPY{Golden:1985:BEP}, \citeAPY{Bergman:1986:EDC}, Milton (\citeyearNP{Milton:1987:MCEa}, \citeyearNP{Milton:1987:MCEb}) and \citeAPY{Milton:1990:RCF} on developing bounds on effective tensors of composites containing more than two phases
using analyticity of the effective tensors as a multivariable function of the moduli of the phases, we also establish that the DtN map is an analytic function of  the permittivity and permeability tensors of each phase. Another potential application of these analytic properties is to derive information about the DtN map for real frequencies by using the theory of boundary-values of Herglotz functions
(for instance, see \citeAY{Gesztesy:2000:MVH} and \citeAY{Naboko:1996:ZTB}).
Moreover, as the DtN map is usually used as data in electromagnetic inverse problems%
\index{inverse problems}
(see, for instance, \citeAY{Albanese:2006:ISP}; \citeAY{Uhlmann:2014:REO}, \citeAY{Ola:1993:IBV}), we believe these analyticity properties and the connection to the theory of Herglotz functions will have important applications in this area of research (see \cite[Chapter 5]{Graeme:2016:ETC}). The Herglotz properties might also be important to characterize the complete set of all possible Dirichlet-to-Neumann maps (either at fixed frequency or as a function of frequency) associated with multiphase bodies with frequency independent permittivity and permeability. Indeed such analyticity
properties were a key ingredient to characterize the possible dynamic response functions of multiterminal mass-spring networks (\citeAY{Vasquez:2011:CCS}). These response functions
are the discrete analogs of the Dirichlet-to-Neumann map in that problem. Additionally, analytic properties were a key ingredient in the theory of exact relations%
\index{composites!exact relations}
(\citeAY{Grabovsky:1998:EREa}; \citeAY{Grabovsky:1998:EREb}; \citeAY{Grabovsky:2000:ERE}: see also Chapter 17 in \citeAY{Milton:2002:TOC} and \citeAY{Grabovsky:2004:AGC})
satisfied by the effective tensors of composites, and for establishing links between effective tensors. These are generally nonlinear relations that are microstructure independent
and thus, besides their intrinsic interest, are useful as benchmarks for numerical methods and approximations. They become linear (\citeAY{Grabovsky:1998:EREa})
after a suitable fractional linear matrix transformation is made
(which is nonunique and involves an arbitrary unit vector $\Bn$). After any such transformation is made and once certain algebraic relations are satisfied (for all unit vectors $\Bn$)
it can be proved  that all terms in the series expansion satisfy the exact relation, and then analyticity is needed to prove the relation holds
(in the domain of analyticity) even if the series expansion does not converge (\citeAY{Grabovsky:2000:ERE}).

We split this chapter in three sections. In the first one, we consider the electromagnetic DtN map for a layered media. In this setting, the DtN map can be expressed explicitly in terms of the transfer matrix%
\index{transfer matrices}
associated with the medium. This gives a good example in which one can see these analytic properties in the context of matrix perturbation theory%
\index{matrix perturbation theory}
(\citeAY{Baumg:1985:APT}; \citeAY{Kato:1995:PTO};
\citeAY{Welters:2011:ERF}). In the second section, we restrict ourselves to bounded media but with a large class of different geometries, more precisely, Lipschitz domains. In this case, using a variational reformulation of the time-harmonic Maxwell's equations (\citeAY{Cessenat:1996:MME}; \citeAY{Kirsch:2015:MTT}; \citeAY{Monk:2003:FEM}; \citeAY{Nedelec:2001:AEE}),
we prove both the well-posedness and the analyticity of the DtN map. Also we consider bodies where the moduli $\omega\Gve(\Bx,\Go)$
and $\omega\Gm(\Bx,\Go)$ are not piecewise constant but instead vary with position, and at each point $\Bx$ are Herglotz functions of the frequency $\omega$. In this case we establish the
Herglotz properties of the Dirichlet-to-Neumann map, as a function of frequency.
In both sections, the key step to prove the multivariable analyticity is Hartogs' Theorem%
\index{Hartogs' theorem}
from complex analysis which essentially says that analyticity in each variable separately implies joint analyticity (see Theorem \ref{FThm.Hartog} below).
Concerning the connection to Herglotz functions, an energy balance equation is derived (which is essentially Poynting's Theorem%
\index{Poynting's theorem}
for complex frequencies) that allows us to prove that the imaginary part of the DtN map is positive definite,  as a consequence of the positivity of the imaginary part of the material tensors. Nevertheless, in the case of anisotropic media, the connection to Herglotz functions has to be made more precise. Indeed, we leave here the usual framework of Herglotz functions of
scalar variables since we are concerned with dielectric permittivity and magnetic permeability tensors as input variables. Thus, the purpose of the last section is to provide a rigorous definition of
Herglotz functions in this general framework, that provides an alternative to the one developed in Section 18.8 of \citeAPY{Milton:2002:TOC}, by connecting this notion to the theory of holomorphic functions on tubular domains%
\index{tubular domain}
with nonnegative imaginary part as described in \citeAY{Vladimirov:2002:MTG} (see Sections $17$--$19$). This new link is especially significant since this class of functions
(like the Herglotz functions introduced in Section 18.8 of \citeAPY{Milton:2002:TOC})
admits integral representations analogous to Herglotz functions of one complex variable (the representation in the one variable case as described in Theorem \ref{thm.repHerg} below) and are deeply connected to the theory of multivariate passive linear systems%
\index{multivariate passive linear systems}
(see Section $20$ in \citeAY{Vladimirov:2002:MTG}) with the notions of convolutions, passivity, causality, Laplace/Fourier transforms, and analyticity properties.

This chapter is essentially self-contained, and written in a rigorous mathematical style. Care has been taken to explain most technical definitions so that it should
be accessible to non-mathematicians. 

Before we proceed, let us introduce some notation, definitions and theorems used in this chapter.
We denote:
\begin{itemize}
\item the complex upper-half plane by $\bbC^{+}=\left\{ z\in \bbC \mid \operatorname{Im}z>0  \right\}$,
\item the Banach space%
\index{Banach space}
of all $m\times n$ matrices with complex entries by $M_{m,n}(\bbC)$ equipped with any norm, with the square matrices $M_{n,n}(\bbC)$ denoted by $M_{n}(\bbC)$, and we treat $\bbC^n$ as $M_{n,1}(\bbC)$ (recall that a Banach space is a complete normed vector space: unlike a Hilbert space, it does not necessarily have an inner product defined on the space, just a norm.)
\item by  $\cdot$ the operation defined for all vectors $\Bu,\Bv \in \bbC^n$ via $ \Bu \cdot \,\Bv=\Bu^{T} \Bv=u_iv_i$, where $T$ denotes the transpose. Note that there is no complex conjugation in this definition, so $\Bu\cdot\Bu$ is not generally real.
\item the open, connected, and convex subset of $M_{n}(\bbC)$ of matrices with positive definite imaginary part by $$M_{n}^{+}(\bbC)=\left\{\BM\in M_{n}(\bbC) \mid \operatorname{Im} \BM>0\right\},$$ where $\operatorname{Im}\BM=(\BM-\BM^{*})/(2i)$ with $\BM^*=\overline{\BM}^{T}$ the adjoint of $\BM$, and the inequality $\BM>0$ holds in the sense of quadratic forms.
We remark that this set is invariant by the operation: $\BM \to -\BM^{-1}$  since if $\BM \in M_{n}^{+}(\bbC)$ then $\BM$ is invertible and  $$-\operatorname{Im}(\BM^{-1})= (\BM^{-1})^{*}\,\operatorname{Im}(\BM) \, \BM^{-1}>0 $$
\item by $L(E,F)$ the Banach space of all continuous linear operators from a Banach space $E$ to a Banach space $F$ equipped with the operator norm.
\end{itemize}

\begin{Def}(Analyticity)
Let $E$ and $F$ be two complex Banach spaces and  $U$ be an open set of $E$.  A function $f:U \to F$ is said to be a analytic if it is differentiable on $\CU$.
\end{Def}

\begin{Def}\label{FDef.Herg1}(Herglotz functions)
Let $m,n,N\in \mathbb{N}$, where $\mathbb{N}$ is the set of natural numbers (positive integers), and $\mathcal{T}= (\bbC^{+})^{n}$ or $(M_{N}^{+}(\bbC))^{n}$. An analytic function $h:\mathcal{T}\to \bbC$ or $h:\mathcal{T}\to  M_m(\bbC)$ is called a Herglotz function%
\index{Herglotz functions}
(also called Pick or Nevanlinna function)%
\index{Pick functions|see{Hergotz functions}}%
\index{Nevanlinna functions|see{Herglotz functions}}
if
$$
\operatorname{Im}(h(\Bz)) \geq  0, \ \forall \Bz \in \mathcal{T}.
$$
\end{Def}
We note here that Definition \ref{FDef.Herg1} is the standard definition of a Herglotz function when $\mathcal{T}=\bbC^{+}$ (see \citeAY{Gesztesy:2000:MVH}, \citeAY{Berg:2008:SPBS}) and $\mathcal{T}=(\bbC^{+})^{n}$ (in \citeAY{Agler:2012:OMF} it is called a Pick function), but not when $\mathcal{T}=(M_{N}^{+}(\bbC))^{n}$.
Its justification in this last case is given in Section \ref{sec.anisopHerg}.

A particular and useful property of Herglotz functions defined on a scalar variable, which has been a key-tool to use analytic methods to derive bounds, is the following representation theorem.%
\index{Herglotz functions!integral representation}
\begin{Thm}\label{thm.repHerg}
A necessary and sufficient condition for a function $h:\bbC^{+}\to \bbC$ to be a Herglotz function is that there exist $\alpha,\beta\in\bbR$ with $\alpha\geq 0$ and a positive regular Borel measure $\mu$ for which   $\int_{\bbR} \
 \dd \mu(\lambda)/(1+\lambda^2)$ is finite such that
\begin{equation}\label{eq.defhergl}
h(z)=\alpha \, z+\beta + \displaystyle \int_{\bbR} \left(  \frac{1}{\lambda-z}- \frac{\lambda}{1+\lambda^2}\right)\dd \mu( \lambda), \ \mbox{ for } z \in \bbC^{+}.
\end{equation}
\end{Thm}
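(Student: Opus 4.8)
The plan is to prove the two implications separately, verifying sufficiency by a direct computation and reserving the real work for necessity, which is the substantive classical statement. For sufficiency, suppose $h$ is given by the stated integral. First I would confirm analyticity on $\bbC^{+}$ by differentiating under the integral sign: on any compact subset of $\bbC^{+}$ one has $|1/(\lambda-z)|\le 1/\operatorname{Im}z$, so the finiteness of $\int_{\bbR}\dd\mu(\lambda)/(1+\lambda^2)$ dominates the integrand and its derivative, justifying the interchange. Writing $z=x+iy$ with $y>0$ and computing
\[
\operatorname{Im}\left(\frac{1}{\lambda-z}\right)=\frac{y}{(\lambda-x)^2+y^2}> 0,
\]
while the subtracted term $\lambda/(1+\lambda^2)$ and the constant $\beta$ are real and $\operatorname{Im}(\alpha z)=\alpha y\ge 0$, shows $\operatorname{Im}h(z)\ge 0$. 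Hence the representation forces $h$ to be Herglotz.

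For necessity the idea is to transport the problem to the unit disk $\mathbb{D}$, where the Riesz--Herglotz representation for functions of positive real part is available, and then pull the result back. I would use the Cayley transform $w=\psi(z)=(z-i)/(z+i)$, which maps $\bbC^{+}$ conformally onto $\mathbb{D}$, and set $F(w)=-i\,h(\psi^{-1}(w))$. Since $\operatorname{Re}F=\operatorname{Im}h\ge 0$, the function $F$ is analytic on $\mathbb{D}$ with nonnegative real part.

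The heart of the argument is the disk representation. For each $r\in(0,1)$ the Schwarz--Poisson formula applied to the map $w\mapsto F(rw)$, which is analytic on a neighborhood of $\overline{\mathbb{D}}$, gives
\[
F(rw)=i\operatorname{Im}F(0)+\int_0^{2\pi}\frac{e^{i\theta}+w}{e^{i\theta}-w}\,\dd\sigma_r(\theta),\qquad \dd\sigma_r(\theta)=\frac{1}{2\pi}\operatorname{Re}F(re^{i\theta})\,\dd\theta,
\]
where each $\sigma_r\ge 0$ has total mass $\operatorname{Re}F(0)$ by the mean value property, independent of $r$. This uniform mass bound is precisely what lets me invoke weak-$*$ compactness (Banach--Alaoglu, or the Helly selection theorem) to extract a sequence $r_n\to 1$ with $\sigma_{r_n}\rightharpoonup\sigma$ for some positive measure $\sigma$ on $[0,2\pi)$. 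Since the kernel is continuous in $\theta$ for each fixed $w\in\mathbb{D}$ and $F(r_nw)\to F(w)$, I can pass to the limit to obtain $F(w)=ic+\int_0^{2\pi}\frac{e^{i\theta}+w}{e^{i\theta}-w}\,\dd\sigma(\theta)$ with $c=\operatorname{Im}F(0)\in\bbR$. I expect this passage to the boundary---securing the uniform bound and justifying the weak-$*$ limit---to be the main obstacle; everything after it is algebra.

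Finally I would pull the representation back through $\psi$. Substituting $w=\psi(z)$ and $e^{i\theta}=\psi(\lambda)$ for $\lambda\in\bbR$, and using $h=iF$, a direct calculation yields the kernel identity
\[
i\,\frac{e^{i\theta}+w}{e^{i\theta}-w}=\frac{1+\lambda z}{\lambda-z}=(1+\lambda^2)\left(\frac{1}{\lambda-z}-\frac{\lambda}{1+\lambda^2}\right)
\]
for finite $\lambda$, whereas the exceptional point $e^{i\theta}=1$ (that is, $z=\infty$) gives $i(1+w)/(1-w)=z$. Splitting off any atom $\sigma(\{0\})=:\alpha\ge 0$ at $\theta=0$ and pushing the remaining mass forward to $\bbR$ via $\theta\mapsto\lambda$ with $\dd\mu(\lambda)=(1+\lambda^2)\,\dd\sigma$, I obtain exactly
\[
h(z)=\alpha z+\beta+\int_{\bbR}\left(\frac{1}{\lambda-z}-\frac{\lambda}{1+\lambda^2}\right)\dd\mu(\lambda),\qquad \beta=-c\in\bbR,
\]
where the required finiteness $\int_{\bbR}\dd\mu(\lambda)/(1+\lambda^2)=\sigma\big([0,2\pi)\setminus\{0\}\big)<\infty$ holds automatically because $\sigma$ is a finite measure. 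This produces the representation and completes the necessity direction.
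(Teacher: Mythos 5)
The paper does not actually prove Theorem \ref{thm.repHerg}: it states this classical Nevanlinna--Riesz--Herglotz representation as a known fact and refers the reader to Gesztesy--Tsekanovskii for the matrix-valued extension. So there is no in-paper proof to compare against; what you have written is the standard textbook argument, and it is essentially correct. Your route --- Cayley transform to the disk, Schwarz--Poisson formula on dilates $F(rw)$, the mean-value identity giving the uniform mass bound $\sigma_r([0,2\pi))=\operatorname{Re}F(0)$, Helly/Banach--Alaoglu to extract a weak-$*$ limit, and then the pull-back with the kernel identity $i\,\frac{e^{i\theta}+w}{e^{i\theta}-w}=\frac{1+\lambda z}{\lambda-z}$ and the atom at $\theta=0$ producing the linear term $\alpha z$ --- is the classical proof, and all the computations you display check out (including the identification of the point mass at $e^{i\theta}=1$ with the term $\alpha z$ and the change of measure $\dd\mu=(1+\lambda^2)\,\dd\sigma$).

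One small imprecision worth fixing in the sufficiency direction: the bound $|1/(\lambda-z)|\le 1/\operatorname{Im}z$ together with finiteness of $\int_{\bbR}\dd\mu(\lambda)/(1+\lambda^2)$ does \emph{not} by itself dominate the integrand, because $\mu$ may be an infinite measure and $\int_{\bbR}|\lambda|\,\dd\mu(\lambda)/(1+\lambda^2)$ may diverge (e.g.\ $\mu=$ Lebesgue measure), so the two terms of the integrand are not separately integrable. You must instead combine them first,
\begin{equation*}
\frac{1}{\lambda-z}-\frac{\lambda}{1+\lambda^2}=\frac{1+\lambda z}{(\lambda-z)(1+\lambda^2)},
\end{equation*}
and observe that this quotient is bounded by $C_z/(1+\lambda^2)$ with $C_z$ locally uniform in $z$ (split into $|\lambda|\le 2|z|$, where $|\lambda-z|\ge\operatorname{Im}z$, and $|\lambda|>2|z|$, where $|\lambda-z|\ge|\lambda|/2$). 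With that correction the domination, the analyticity, and the positivity of the imaginary part all go through exactly as you describe, and the proof is complete.
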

For an extension of this representation theorem, for instance, in the case of matrix-valued Herglotz functions $h:\bbC^{+}\to M_{m}(\bbC)$, we refer to \citeAPY{Gesztesy:2000:MVH}.

\begin{Thm}(Hartogs' Theorem)\label{FThm.Hartog}%
\index{Hartogs' theorem}
If $h:U\to E$ is a function with $U$ an open subset of $\bbC^{n}$ and $E$ is a Banach space then $h$ is a multivariate analytic function (i.e., jointly analytic) if and only if it is an analytic function of each variable separately.
\end{Thm}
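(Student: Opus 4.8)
The plan is to treat the two implications separately. The forward direction (joint analyticity $\Rightarrow$ separate analyticity) is immediate: if $h$ is differentiable as a map of the $n$ complex variables jointly, then freezing all coordinates but the $j$-th and restricting to an affine complex line recovers a differentiable, hence analytic, function of the single variable $z_j$. All the content lies in the converse, which is Hartogs' theorem proper, and the first reduction I would make is from the Banach-valued setting to the scalar one by duality. For every $\ell \in E^{*}$ the composite $\ell \circ h : U \to \bbC$ is again separately analytic, since precomposing a separately analytic map with a fixed continuous linear map preserves analyticity in each variable. Granting the scalar version of the theorem, each $\ell \circ h$ is then jointly analytic, and in particular continuous; this is weak joint analyticity. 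The task thus splits into (i) the scalar Hartogs theorem, and (ii) upgrading weak joint analyticity of the $E$-valued $h$ to genuine (strong) joint analyticity.

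For step (ii) the bridge is local boundedness. Fix a closed polydisc $\overline{\Delta}=\overline{\Delta}_1\times\cdots\times\overline{\Delta}_n \subset U$. For each $\ell \in E^{*}$ the scalar function $\ell\circ h$ is continuous on the compact set $\overline{\Delta}$, so $\sup_{z\in\overline{\Delta}}|\ell(h(z))|<\infty$; thus $\{h(z): z\in\overline{\Delta}\}\subset E$ is weakly bounded, and by the uniform boundedness principle it is norm-bounded. Since $h$ is also separately (strongly) analytic, hence norm-continuous in each variable separately, Osgood's lemma now applies: the one-variable Cauchy formula, valid for strongly analytic Banach-valued functions, can be iterated one coordinate at a time, each step integrating a norm-continuous integrand over a circle, to give
\[
h(z)=\frac{1}{(2\pi i)^{n}}\oint_{\partial\Delta_1}\!\!\cdots\oint_{\partial\Delta_n}\frac{h(\zeta_1,\dots,\zeta_n)}{(\zeta_1-z_1)\cdots(\zeta_n-z_n)}\,\dd\zeta_n\cdots\dd\zeta_1 .
\]
Expanding each factor $(\zeta_j-z_j)^{-1}$ as a geometric series, which converges uniformly on the polytorus for $z$ in a slightly smaller polydisc, and interchanging sum and integral (legitimate by the norm bound on the polytorus) yields a norm-convergent multivariate power series for $h$; hence $h$ is jointly analytic.

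The genuine obstacle is step (i), the scalar theorem, whose whole point is that \emph{no} continuity or boundedness hypothesis is assumed---only separate analyticity. A standard induction on $n$ reduces matters to $n=2$; writing points as $(z,w)$ and working on a bidisc, for each fixed $z$ expand $h(z,\cdot)=\sum_{k\geq 0} a_k(z)\,w^{k}$. One checks that each Taylor coefficient $a_k$ is itself holomorphic in $z$ (via a Cauchy integral in $w$, using joint measurability of the separately continuous $h$), so that $\tfrac1k\log|a_k(z)|$ is subharmonic, while the fact that $h(z,\cdot)$ is holomorphic on a fixed disc for every $z$ gives $\limsup_k \tfrac1k\log|a_k(z)|\leq \text{const}$ pointwise. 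The crux is to convert this \emph{pointwise} control into a uniform lower bound on the radius of convergence over an open set of $z$, i.e. into \emph{local boundedness} of $h$ near the boundary; once that is secured, Osgood's lemma (as above, in the scalar case) finishes. This conversion is the hard technical heart of the theorem and is where I expect the real difficulty to lie: it requires Hartogs' lemma on the upper-semicontinuous regularization of a $\limsup$ of subharmonic functions, combined with the Baire category theorem to locate a subregion on which the $|a_k|$ are simultaneously bounded, so that subharmonicity can be propagated to yield the uniform estimate. Apart from this subharmonic-function core, every remaining step is either a formal manipulation of Cauchy integrals or a soft functional-analytic reduction (duality and uniform boundedness), so the entire weight of the argument rests on the scalar local-boundedness estimate.
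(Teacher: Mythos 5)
The paper offers no proof of this theorem: it is stated as a background result and the reader is referred to H\"ormander (Theorem 2.2.8) for the scalar case $E=\bbC$ and to Mujica (Theorem 36.1) for the Banach-valued case. Your outline is a faithful reconstruction of exactly the argument in those references: the forward implication by restriction to complex lines; the reduction of the $E$-valued case to the scalar case by composing with functionals $\ell\in E^{*}$, extracting local norm-boundedness from weak boundedness via the uniform boundedness principle, and then running the iterated Cauchy integral (Osgood's lemma); and, for the scalar case, the genuinely hard step of upgrading pointwise control of the Taylor coefficients $a_k(z)$ to locally uniform control via subharmonicity of $\tfrac1k\log|a_k(z)|$, Hartogs' lemma on upper-semicontinuous regularizations, and a Baire category argument to find an initial open set of boundedness. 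You correctly locate all of the weight of the proof in that last step and name the right tools, so there is no wrong turn; the only point I would press you on is the justification of the iterated Cauchy formula in step (ii): before expanding in geometric series you should either first establish joint continuity from separate analyticity plus local boundedness (the usual form of Osgood's lemma, via Schwarz-type estimates on differences), or else keep the integrals strictly iterated and invoke joint measurability of separately continuous functions, since a priori nothing guarantees that the integrand is continuous on the polytorus as a function of all the $\zeta_j$ together. With that caveat the proposal is sound and is the standard proof the paper implicitly relies on.
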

A proof of Hartogs' Theorem when $E=\bbC$ can be found in \citeAPY{Hormander:1990:ICA} (see Section \ 2.2, p.\ 28, Theorem 2.2.8). For the general case, we refer the reader to \citeAPY{Mujica:1986:CAB} (see Section 36, p.\ 265, Theorem 36.1).
\begin{Thm}\label{FTh.analyinvop}
Let $E$ and $F$ denote two Banach spaces and $U$ an open subset of $\bbC^{n}$. If $h:U\to L(E,F)$ is an analytic function and for each $\Bz\in U$ the value $h(\Bz)$ is an isomorphism, then the function $\Bz\to h(\Bz)^{-1}$ is analytic from $U$
into $L(F,E)$.
\end{Thm}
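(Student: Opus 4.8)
The plan is to factor the map $\Bz \mapsto h(\Bz)^{-1}$ as the composition of $h$ with the abstract inversion map on the set of invertible operators, and to prove that this inversion map is itself analytic; the conclusion then follows from the chain rule for differentiable maps between Banach spaces. First I would record that the set
$$
\Omega = \{\, T \in L(E,F) \mid T \text{ is an isomorphism} \,\}
$$
is open in $L(E,F)$ and that inversion $\iota : \Omega \to L(F,E)$, $\iota(T) = T^{-1}$, indeed lands in $L(F,E)$, since the inverse of a continuous linear bijection between Banach spaces is automatically continuous by the open mapping theorem. Both the openness of $\Omega$ and the local structure of $\iota$ come from the Neumann series: if $T_0 \in \Omega$ and $\|H\| < 1/\|T_0^{-1}\|$, then $T_0 + H = T_0\,(I + T_0^{-1}H)$ is again an isomorphism, with
$$
(T_0+H)^{-1} = \Big(\sum_{k=0}^{\infty} (-T_0^{-1}H)^{k}\Big) T_0^{-1} = T_0^{-1} - T_0^{-1} H\, T_0^{-1} + \sum_{k=2}^{\infty} (-T_0^{-1}H)^{k} T_0^{-1}.
$$

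Second, I would read off from this expansion that $\iota$ is differentiable on $\Omega$, with derivative $D\iota(T_0)\,H = -\,T_0^{-1} H\, T_0^{-1}$, since the tail is controlled by $\|T_0^{-1}\| \sum_{k\ge 2}(\|T_0^{-1}\|\,\|H\|)^{k} = O(\|H\|^2)$ as $H \to 0$. Thus $\iota$ is analytic on $\Omega$ in the sense of the paper's definition; being given locally by a norm-convergent power series in the increment $H$, it is in fact analytic in the strongest sense one could want.

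Finally, since $h : U \to L(E,F)$ is analytic with $h(U) \subset \Omega$, the composition $\iota \circ h : U \to L(F,E)$ is differentiable by the chain rule, and $(\iota \circ h)(\Bz) = h(\Bz)^{-1}$; hence $\Bz \mapsto h(\Bz)^{-1}$ is analytic on $U$, as claimed. If one prefers to avoid invoking a multivariable chain rule directly, one can instead apply Hartogs' Theorem \ref{FThm.Hartog} to reduce to the one-variable case $n=1$ and argue separately in each coordinate, where composing the scalar-variable analytic map $h$ with $\iota$ is elementary.

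I expect the only genuine work to be the differentiability of the inversion map $\iota$, that is, justifying the Neumann expansion on a neighborhood of each $T_0 \in \Omega$ and extracting its linear term; once that is in hand, the factorization and the chain rule make the rest formal.
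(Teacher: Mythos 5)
Your proof is correct, and it is more self-contained than what the paper offers: the paper does not actually prove Theorem \thm{FTh.analyinvop} but instead cites Kato (Chapter 7, Section 1) for the case $n=1$ and then invokes Hartogs' Theorem to pass to $n>1$. The mechanism underlying Kato's argument is the same Neumann-series expansion of $(T_0+H)^{-1}$ that you use, so at the level of the key estimate the two routes coincide; the difference is in the packaging. By isolating the inversion map $\iota$ on the open set $\Omega$ of isomorphisms, proving it is Fr\'echet differentiable with derivative $H\mapsto -T_0^{-1}HT_0^{-1}$, and then composing with $h$ via the chain rule, you handle all $n$ at once and never need the Hartogs reduction at all, since the paper's definition of analyticity is precisely differentiability on the open set and the chain rule applies verbatim to maps between complex Banach spaces (here $U\subset\bbC^n$ and $L(E,F)$). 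What the paper's route buys is brevity (everything is outsourced to references); what yours buys is a single uniform argument in which, as you say, the only genuine work is the $O(\|H\|^2)$ control of the Neumann tail, which you carry out correctly. Your closing remark that one could alternatively reduce to $n=1$ coordinate-by-coordinate and then apply Hartogs is exactly the paper's stated strategy, so you have in effect subsumed it.
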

For a proof of Theorem \ref{FTh.analyinvop} when $n=1$, we refer the reader to \citeAPY{Kato:1995:PTO} (see Chapter 7, Section 1, pp.\ 365--366).
The proof for an integer $n>1$ is then obtained by using Hartogs' Theorem.

The next theorem, which is a rewriting of Theorem 3.12 of \citeAPY{Kato:1995:PTO} shows that the notion of weak analyticity of a family of operators in $L(E,F)$ implies the analyticity of this family for the operator norm of $L(E,F)$. More precisely, we have the following result:
\begin{Thm}\label{Th.opanalytic}
Let $E$ and $F$ be two Banach spaces, $U$ an open subset of $\bbC$ and $h:U\to L(E,F)$.
We denote by $\left\langle \cdot , \cdot \right\rangle$ the duality product of $F$ and its dual $F^{*}$.
If the function $$h_{\phi,\psi}(z)=\left\langle h(z) \phi, \psi \right\rangle, \, \forall z\in U,$$  is analytic on $U$ for all $\phi$ in a
dense subset of $E$ and for all $\psi$ in a dense subset of $F^{*}$, then $h$ is analytic in $U$ for the operator norm of  $L(E,F)$.
\end{Thm}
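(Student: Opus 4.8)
The plan is to reduce the norm analyticity of $h$ to a uniform control of the scalar difference quotients of $z\mapsto g_{\phi,\psi}(z)=\langle h(z)\phi,\psi\rangle$, exploiting the identity $\|A\|_{L(E,F)}=\sup\{\,|\langle A\phi,\psi\rangle| : \phi\in E,\ \psi\in F^{*},\ \|\phi\|\le 1,\ \|\psi\|\le 1\,\}$ together with the completeness of $L(E,F)$. Since for fixed $A$ the form $(\phi,\psi)\mapsto\langle A\phi,\psi\rangle$ is continuous, that supremum may be restricted to the dense subsets on which the $g_{\phi,\psi}$ are assumed analytic. Two ingredients are then needed: (i) a local bound $\sup_{\zeta\in K}\|h(\zeta)\|<\infty$ on each compact $K\subset U$, and (ii) a uniform Cauchy estimate for the difference quotients of a bounded scalar analytic function. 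Granting (i), I would run (ii) to produce the norm derivative.

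For (i) the natural mechanism is the uniform boundedness principle applied twice. Each $g_{\phi,\psi}$ with $\phi$ in the dense subset $D_E$ and $\psi$ in the dense subset $D_{F^{*}}$ is analytic, hence continuous, hence bounded on any compact $K$, so $\sup_{\zeta\in K}|\langle h(\zeta)\phi,\psi\rangle|<\infty$ for all such $\phi,\psi$. \emph{Provided} these pointwise bounds hold against all of $F^{*}$ and all of $E$, applying uniform boundedness first to $\{h(\zeta)\phi\}_{\zeta\in K}\subset F\hookrightarrow F^{**}$ (for fixed $\phi$) and then to $\{h(\zeta)\}_{\zeta\in K}\subset L(E,F)$ yields $\sup_{\zeta\in K}\|h(\zeta)\|<\infty$. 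The passage from the dense subsets to the full spaces is the delicate point and is discussed at the end.

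For (ii), fix $z_0\in U$ and $r>0$ with $\overline{D}(z_0,r)\subset U$, and set $C_0=\sup_{|\zeta-z_0|=r}\|h(\zeta)\|$, finite by (i). For $z\in D(z_0,r/2)$ and small increments $w,w'$, Cauchy's integral formula gives, for every scalar $g_{\phi,\psi}$,
\[
\frac{g_{\phi,\psi}(z+w)-g_{\phi,\psi}(z)}{w}-\frac{g_{\phi,\psi}(z+w')-g_{\phi,\psi}(z)}{w'}=\frac{w-w'}{2\pi \ii}\oint_{|\zeta-z_0|=r}\frac{g_{\phi,\psi}(\zeta)\,\dd\zeta}{(\zeta-z)(\zeta-z-w)(\zeta-z-w')}.
\]
Estimating the kernel with $|\zeta-z|\ge r/2$ and $|\zeta-z-w|,|\zeta-z-w'|\ge r/4$, and using $|g_{\phi,\psi}(\zeta)|\le C_0\|\phi\|\,\|\psi\|$ on the circle, I obtain a bound $C(r)\,C_0\,|w-w'|\,\|\phi\|\,\|\psi\|$, valid for $\phi\in D_E,\psi\in D_{F^{*}}$ and then for all $\phi,\psi$ by density. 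Writing $\Delta_w(z)=(h(z+w)-h(z))/w$ and taking the supremum over the unit balls gives $\|\Delta_w(z)-\Delta_{w'}(z)\|_{L(E,F)}\le C(r)\,C_0\,|w-w'|$. Hence $\{\Delta_w(z)\}$ is Cauchy in operator norm as $w\to0$, and by completeness of $L(E,F)$ it converges to some $h'(z)\in L(E,F)$. This is complex differentiability of $h$ in operator norm on $D(z_0,r/2)$, and since $z_0$ is arbitrary, the analyticity of $h$ on $U$.

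I expect the \textbf{main obstacle} to be step (i): the uniform boundedness principle needs the pointwise bounds $\sup_{\zeta\in K}|\langle h(\zeta)\phi,\psi\rangle|<\infty$ to hold for \emph{all} $\phi\in E$ and $\psi\in F^{*}$, whereas the hypothesis only supplies analyticity, hence local boundedness of the scalars, for $\phi,\psi$ in the dense subsets $D_E,D_{F^{*}}$. Propagating these bounds to the full spaces is subtle, since the naive approximation $\langle h(\zeta)\phi,\psi-\psi_k\rangle\to0$ already presupposes a bound on $\|h(\zeta)\phi\|$; this is precisely the local boundedness assumed outright in Kato's Theorem~3.12. One clean partial remedy is to note that $\zeta\mapsto\|h(\zeta)\|$ is a supremum of the continuous functions $|g_{\phi,\psi}|$ over the unit balls of $D_E,D_{F^{*}}$, hence lower semicontinuous and finite-valued, so a Baire category argument on $U$ yields a dense open set on which $\|h(\cdot)\|$ is locally bounded; upgrading this to local boundedness throughout $U$ is the remaining, and genuinely delicate, part of the argument.
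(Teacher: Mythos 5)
You have correctly reconstructed the substance of the result, and your step (ii) is exactly the second half of the argument behind it: the uniform third-order Cauchy estimate for the difference quotients, transferred from the dense test sets to the whole unit balls by continuity of $(\phi,\psi)\mapsto\langle A\phi,\psi\rangle$ for fixed $A$, followed by completeness of $L(E,F)$. The paper, however, offers no proof at all of this statement: it is presented as ``a rewriting of Theorem 3.12 of Kato'' and used as a black box. Comparing your attempt with Kato's actual Theorem 3.12 is instructive, because Kato's theorem carries an explicit hypothesis that the paper's restatement has silently dropped, namely that $\|h(\zeta)\|$ is locally bounded on $U$. With that hypothesis your step (i) is vacuous and your proof is complete and correct.

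Without it, the obstacle you single out is not a defect of your argument but a genuine gap in the statement as written: weak analyticity tested only against dense subsets $D_E\subset E$, $D_{F^*}\subset F^*$ does \emph{not} imply local boundedness of $\|h(\cdot)\|$, and the conclusion can fail. (Arendt and Nikolski, \emph{Vector-valued holomorphic functions revisited}, Math.\ Z.\ 234 (2000), construct, already in the case $E=\bbC$, a function $f$ on the disk with $\langle f(\cdot),\psi\rangle$ holomorphic for every $\psi$ in a dense subspace of $F^*$ but $f$ not locally bounded, hence not holomorphic; lower semicontinuity of $\|h(\cdot)\|$ and Baire category give local boundedness only on a dense open subset, as you note, and that cannot in general be upgraded.) Your double application of the uniform boundedness principle does close step (i) in the special case where the dense subsets are all of $E$ and $F^*$, since then analyticity of each $g_{\phi,\psi}$ gives the required pointwise bounds on compacta; this is in fact the only case in which the theorem is invoked in this chapter (e.g.\ in Proposition \ref{Fprop.invAZ} the weak analyticity is verified for \emph{all} $\BGf,\Bpsi\in H_0(\Curlop,\Omega)$), so the applications are unaffected. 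The honest fix is to restore Kato's hypothesis of local boundedness of $\|h(\cdot)\|$ (or to require the test sets to be the full spaces); with either amendment your proof stands as written.
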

The following is a theorem for taking the derivative under the integral of a function which depends analytically on a complex parameter (see \citeAY{Mattner:2001:CDI}). It introduces the notion of a measure space that we briefly recall here. A measure space $(\Omega,\mathcal{F},\mu)$ is roughly speaking a triple composed of a set $\Omega$, a collection $\mathcal{F}$ of subsets of $\Omega$ that one wants to measure ($\mathcal{F}$ is called a $\Gs-$algebra) and a measure $\mu$ defined on $\mathcal{F}$.
\begin{Thm}\label{Th.derivsum}
Let $(\Omega,\mathcal{F},\mu)$ be a measure space, let $U$ be an open set of $\bbC$ and  $f:\Omega \times U \to \bbC$ be a function subject to the following assumptions:
\begin{itemize}
\item $f(\cdot,z)$ is $\mathcal{F}$ measurable for all $z\in U$ and $f(\Bx, \cdot )$ is analytic for almost every $\Bx$ in $\Omega$,
\item $\int_{\Omega}| f(\Bx,\cdot)| \, \dint \mu (\Bx)$ is locally bounded, that is, for every $z_0 \in U$ there exists a $\delta>0$ such that
$$
\sup_{z\in U \mid |z-z_0|\leq\delta } \int_{\Omega}| f(\Bx,z)| \, \dint \mu (\Bx) < \infty,
$$
\end{itemize}
then the function $F:U\to \bbC$ defined by
$$
F(z)=\int_{\Omega} f(\Bx,z) \, \dint  \mu(\Bx),
$$
is analytic in $U$ and  one can take derivatives under the integral sign:
$$
F^{(k)}(z)=\int_{\Omega} \frac{\partial^{k} f(\Bx,z)}{\partial z^{k}} \, \dint \mu(\Bx),\  \forall k \in \mathbb{N}.
$$
\end{Thm}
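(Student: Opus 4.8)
The plan is to deduce analyticity of $F$ from \emph{Morera's theorem}, using \emph{Fubini's theorem} to interchange a contour integral with the $\mu$-integral and \emph{Cauchy's theorem} to annihilate the resulting inner integral. The whole argument is local, so I fix $z_0\in U$ once and for all and work in a small disk about it. The crux is to upgrade the hypothesis that $\int_\Omega |f(\Bx,\cdot)|\,\dint\mu(\Bx)$ is locally bounded into a genuine $\mu$-integrable \emph{pointwise} dominating function; this is exactly what legitimizes both the continuity of $F$ and the interchanges of integration.

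First I would build this dominating function. Note that the two hypotheses make $f$ a Carath\'eodory function ($\mathcal{F}$-measurable in $\Bx$, continuous in $z$), hence jointly measurable, so every integral below makes sense and Tonelli's/Fubini's theorems apply. Choose $\delta>0$ so small that $\overline{D(z_0,2\delta)}\subset U$ and $C:=\sup_{|z-z_0|\le 2\delta}\int_\Omega|f(\Bx,z)|\,\dint\mu(\Bx)<\infty$, which is possible by local boundedness. Since $f(\Bx,\cdot)$ is analytic for almost every $\Bx$, the modulus $z\mapsto|f(\Bx,z)|$ is subharmonic there, so the sub-mean-value inequality over the disk $D(z,\delta)\subset D(z_0,2\delta)$ gives, for every $z\in\overline{D(z_0,\delta)}$,
$$
|f(\Bx,z)|\le \frac{1}{\pi\delta^2}\int_{D(z_0,2\delta)}|f(\Bx,w)|\,\dint A(w)=:g(\Bx),
$$
where $\dint A$ is planar Lebesgue measure. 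By Tonelli's theorem,
$$
\int_\Omega g(\Bx)\,\dint\mu(\Bx)=\frac{1}{\pi\delta^2}\int_{D(z_0,2\delta)}\Big(\int_\Omega|f(\Bx,w)|\,\dint\mu(\Bx)\Big)\dint A(w)\le 4C<\infty,
$$
so $g\in L^1(\mu)$ dominates $|f(\Bx,z)|$ uniformly for $z\in\overline{D(z_0,\delta)}$.

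With $g$ in hand the rest is routine. Continuity of $F$ on $D(z_0,\delta)$ follows from the dominated convergence theorem, since $f(\Bx,\cdot)$ is continuous and bounded by $g$. For analyticity, let $\Delta$ be any closed triangle contained in $D(z_0,\delta)$; the bound $|f(\Bx,z)|\le g(\Bx)$ along $\partial\Delta$ makes $(\Bx,z)\mapsto f(\Bx,z)$ integrable on $\Omega\times\partial\Delta$, so Fubini's theorem yields
$$
\oint_{\partial\Delta}F(z)\,\dint z=\int_\Omega\Big(\oint_{\partial\Delta}f(\Bx,z)\,\dint z\Big)\dint\mu(\Bx)=0,
$$
the inner integral vanishing for almost every $\Bx$ by Cauchy's theorem. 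Morera's theorem then gives analyticity of $F$ on $D(z_0,\delta)$, and since $z_0\in U$ was arbitrary, on all of $U$.

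Finally, to differentiate under the integral sign I would apply the Cauchy integral formula for derivatives on a circle $\gamma=\partial D(z_0,r)\subset D(z_0,\delta)$ and interchange once more (again justified by $g$):
$$
F^{(k)}(z_0)=\frac{k!}{2\pi i}\oint_\gamma\frac{F(z)}{(z-z_0)^{k+1}}\,\dint z=\int_\Omega\frac{k!}{2\pi i}\oint_\gamma\frac{f(\Bx,z)}{(z-z_0)^{k+1}}\,\dint z\,\dint\mu(\Bx)=\int_\Omega\frac{\partial^k f}{\partial z^k}(\Bx,z_0)\,\dint\mu(\Bx).
$$
I expect the main obstacle to be precisely the construction of the dominating function $g$ in the first step: the hypotheses control only the $\mu$-integral of $|f|$, not $|f|$ itself, and it is the subharmonicity of the modulus of an analytic function, combined with Tonelli's theorem to trade the pointwise disk-average for the controlled integral, that bridges this gap. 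Once the $L^1$ domination is established, the continuity, Morera/Fubini, and Cauchy-formula differentiation steps are all standard.
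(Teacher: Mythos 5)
Your proposal is correct in substance, but note that the paper offers no proof of Theorem \ref{Th.derivsum} at all: the result is quoted from the reference of Mattner cited immediately before the statement, so there is no internal argument to compare against. Your route --- using subharmonicity of $|f(\Bx,\cdot)|$ and the area sub-mean-value inequality, traded against the hypothesis via Tonelli to manufacture a genuine $L^1(\mu)$ dominating function $g$, then Morera/Fubini for analyticity and the Cauchy integral formula for the derivatives --- is essentially the standard argument behind that reference, and your identification of the dominating function as the crux is exactly right. Two technical points deserve a sentence in a polished write-up. First, your appeals to Tonelli on $\Omega\times D(z_0,2\delta)$ and to Fubini on $\Omega\times\partial\Delta$ require $\sigma$-finiteness, which the theorem does not assume of $(\Omega,\mathcal{F},\mu)$; this is repaired by noting that each $f(\cdot,z)$ is integrable, so $\{\Bx \mid f(\Bx,z)\neq 0\}$ is $\sigma$-finite, and the union of these sets over a countable dense set of $z$ in $D(z_0,2\delta)$ is a $\sigma$-finite set outside of which $f(\Bx,\cdot)\equiv 0$ for almost every $\Bx$ by continuity, so all integrals may be restricted there. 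Second, the Carath\'eodory joint-measurability argument should be applied after discarding the null set where $f(\Bx,\cdot)$ fails to be analytic. Neither point disturbs the architecture of the proof; note also that your final interchange simultaneously establishes the integrability of $\partial_z^k f(\cdot,z_0)$ (since it is bounded by $k!\,r^{-k}g$), which is needed for the displayed formula to make sense.
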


%%%%%%%%%%%%%%%%%%%%%%%%%%%%%%%%%%%%%%%%%%%%%%%%%%%%%%%%%%%%%%%%%%%%%%%%
\section{Analyticity of the DtN map for layered media}
\setcounter{equation}{0}
\subsection{Formulation of the problem}\label{SecFormProblemLayered}
%%%%%%%%%%%%%%%%%%%%%%%%%%%%%%%%%%%%%%%%%%%%%%%%%%%%%%%%%

We consider passive linear two-component layered media (material 1 with moduli $\BGve_1, \BGm_1$; material 2 with moduli $\BGve_2,\BGm_2$) with layers normal to the $z$-axis. The geometry of this problem, as illustrated in \ref{Ffig:layered}, is as follows:
First, a layered medium%
\index{layered medium}
in the region $\Omega=\Omega_1\cup\Omega_2=[-d,d]$ consisting of a two-phase material lies between $z=-d$ and $z=d$. A homogeneous passive linear material lies between $-d_2\leq z \leq d_2$ (denote this ``inner'' region by $\Omega_2=[-d_2,d_2]$) with permittivity and permeability $\BGve_2,\BGm_2$. Another homogeneous passive linear material lies between $-d\leq z<-d_2$, i.e., the region $\Omega_{1,-}=[-d,-d_2)$, and $d_2<z\leq d$, i.e., the region $\Omega_{1,+}=(d_2,d]$ (denote ``outer'' region by $\Omega_1=\Omega_{1,-}\cup\Omega_{1,+}$) with permittivity and permeability  $\BGve_1,\BGm_1$. The unit outward pointing normal vectors to the boundary surfaces of these regions are $\textbf{n}\in \{\textbf{e}_3,-\textbf{e}_3\}$, where $\textbf{e}_3=\begin{bmatrix}0 & 0 & 1\end{bmatrix}^{T}$.

The dielectric permittivity $\BGve$ and magnetic permeability $\BGm$ are $3\times 3$ matrices that depend on the frequency $\omega$ and the spatial variable $z$ only (i.e., spatially homogeneous in each layer) which are defined by
\begin{align}
\BGve=\BGve(\omega,z)=\chi_{\Omega_1}(z)\BGve_1(\omega)+\chi_{\Omega_2}(z)\BGve_2(\omega),\;\;z\in[-d,d],\;\omega\in\bbC^{+},\label{DefDielecMagnMatrices1}\\
\BGm=\BGm(\omega,z)=\chi_{\Omega_1}(z)\BGm_1(\omega)+\chi_{\Omega_2}(z)\BGm_2(\omega),\;\;z\in[-d,d],\;\omega\in\bbC^{+}.\label{DefDielecMagnMatrices2}
\end{align}
Here $\chi_{\Omega_j}$ denotes the indicator function of the region $\Omega_j$, for $j=1,2$. Moreover, they have the passivity properties%
\index{passivity}
%Remember to remove this when giving back to Milton
[see, for example, \cite[Section 1.6]{Graeme:2016:ETC}]
\begin{gather}
\operatorname{Im}(\omega\BGve(\omega,z))>0,\;\;\operatorname{Im}(\omega\BGm(\omega,z))>0,\;\textnormal{for } \operatorname{Im}\omega>0, \label{FPassiveLayeredMedia}
\end{gather}
and $\BGve, \BGm$ are analytic functions of $\omega$ in the complex upper-half plane for each fixed $z$, i.e.,
\begin{align}\label{FDielecMagnHerglotz}
\omega\BGve_j(\omega),\omega\BGm_j(\omega):\bbC^{+}\rightarrow M_3^{+}(\bbC) \textnormal{ are Herglotz functions, for } j=1,2.%
\index{Herglotz functions}
\end{align}

The time-harmonic Maxwell's equations%
\index{Maxwell's equations}
in Gaussian units without sources are
\begin{align}\label{FMaxwellEqsTimeHar}
\Curlop\textbf{E}=\frac{i\omega}{c}\textbf{B},\;\;\Curlop\textbf{H}=-\frac{i\omega}{c}\textbf{D},\;\;\textbf{D} = \BGve\textbf{E},\;\;\textbf{B} = \BGm\textbf{H},
\end{align}
where $c$ denotes the speed of light in a vacuum.

\begin{figure}[t]
\centering
\includegraphics[scale=1.6]{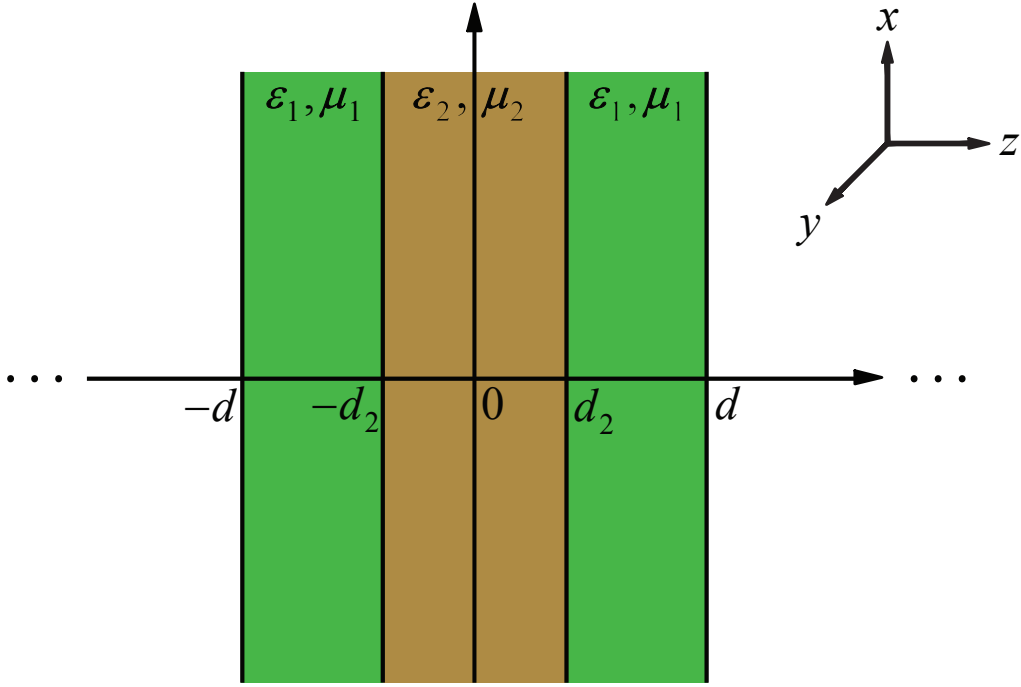}
\caption[The two-component layered medium in a sandwich configuration.]{A plane-parallel, two-component layered medium $\Omega$ consisting of two phases, $\BGve_1, \BGm_1$ and $\BGve_2,\BGm_2$, of linear passive materials with layers normal to the $z$-axis. The core containing the homogeneous material $2$ (with permittivity $\BGve_2$ and permeability $\BGm_2$) is sandwiched between the shell containing the homogeneous material $1$ (with permittivity $\BGve_1$ and permeability $\BGm_1$). Moreover, the system is symmetric about the $xy$-plane.}
\label{Ffig:layered}
\end{figure}

Let us now introduce some classical functional spaces associated to the study of Maxwell's equations (\ref{FMaxwellEqsTimeHar}) in layered media:
\begin{itemize}
\item For a bounded interval $I\subseteq \bbR$, we denote by $L^1(I)$,%
\index{Banach space!L@$L^1(I)$}
the Lebesgue space of integrable functions on $I$. It is a Banach space with norm
\begin{align}
||f||_1=\int_I|f(z)|dz,\;f\in L^1(I).
\end{align}
\item For a bounded interval $I\subseteq \bbR$, we denote by $AC(I)$,%
\index{Banach space!AC@$AC(I)$}
the Banach space of all absolutely continuous functions equipped with the norm
\begin{align}
||f||_{1,1}=\int_I|f(z)|dz+\int_I|f^{\prime}(z)| dz,\;f\in AC(I).
\end{align}
Recall, that any $f\in AC(I)$ is continuous on $I$ into $\bbC$, differentiable almost everywhere on $I$ (i.e., except on a set of Lebesgue measure zero), and is given in terms of its derivative $f^{\prime}=\frac{df}{dz}$ (which is integrable on $I$) by
\begin{align}
f(z)=f(z_0)+\int_{z_0}^{z}f^{\prime}(u)du,\;z_0,z\in I.
\end{align}
\item Denote the Banach space of all $m\times n$ matrices with entries in the Banach space $E$ with norm $||\cdot||$, where $(E,||\cdot||)\in \{(L^1(I),||\cdot||_1),(AC(I),||\cdot||_{1,1}),(\bbC,|\cdot|)\}$, by $M_{m,n}(E)$ and equipped with norm
\begin{align}
||\BM||=\left(\sum_{i=1}^m\sum_{j=1}^n||M_{ij}||^2\right)^{\frac{1}{2}},\;\BM=[M_{ij}]\in M_{m,n}(E),
\end{align}
with $M_{n,n}(E)$ denoted by $M_{n}(E)$, and we treat $E^n$ as $M_{n,1}(E)$.
\item Similar to $AC(I)$, any $\BM\in M_{m,n}(AC(I))$ is continuous on $I$, differentiable almost everywhere on $I$, and in terms of its derivative $\BM^{\prime}=\frac{d\BM}{dz}=[M_{ij}^{\prime}]$ is given by
\begin{align}
\BM(z)=\BM(z_0)+\int_{z_0}^{z}\BM^{\prime}(u)du=\left[M_{ij}(z_0)+\int_{z_0}^{z}M_{ij}^{\prime}(u)du\right],\;z_0,z\in I.
\end{align}
\item Denote the standard inner product%
\index{inner product}
on $\bbC^n$ by $(\cdot,\cdot):\bbC^n\times\bbC^n\rightarrow\bbC$, where
\begin{align}
(\psi_1,\psi_2)=\psi_1^{T}\overline{\psi_2},\;\;\psi_1,\psi_2\in\bbC^n.
\end{align}
\end{itemize}

Now, because of the translation invariance of the layered media in the $x,y$ coordinates, solutions of equation (\ref{FMaxwellEqsTimeHar}) are sought in the form

\begin{equation}\label{FTangentialBlochWaves}
\begin{bmatrix}
\textbf{E}\\
\textbf{H}
\end{bmatrix}
=
\begin{bmatrix}
\textbf{E}(z)\\
\textbf{H}(z)
\end{bmatrix}
e^{i(k_1x+k_2y)},\;x,y\in \bbR,\;z\in[-d,d],\;\BGk=(k_1,k_2)\in\bbC^2,\;\omega\in\bbC^+,
\end{equation}
in which $\BGk$ is the tangential wavevector. Maxwell's equations%
\index{Maxwell's equations!layered media}
(\ref{FMaxwellEqsTimeHar}) for this type of solution can be reduced [see the appendix
in \citeAPY{Shipman:2013:RES} and also \citeAPY{Berreman:1972:OSA} for more details] to an ordinary linear differential equation (ODE) for the vector of
tangential electric and magnetic field components $\BGy$, where
\begin{align}
\BGy(z)&=\begin{bmatrix} E_1(z) & E_2(z) & H_1(z) & H_2(z) \end{bmatrix}^{T}, \label{FTangentialComponents}\\
-i\BJ\frac{d\BGy}{dz}&=\BA(z)\BGy(z),\;\;\BGy\in (AC([-d,d]))^4,\label{FMaxwellODEs}
\end{align}
in which
\begin{align}
\BJ&=
\begin{bmatrix}
0 & \BGr\\
\BGr^* & 0
\end{bmatrix},\;\;
\BGr=\begin{bmatrix}
0 & 1\\
-1 & 0
\end{bmatrix},\;\;
\BJ^*=\BJ^{-1}=\BJ,\label{FJRhoMatrix}\\
\BA&=\BA(z)=\BA(z, \BGk, \omega \BGve_1(\omega),\omega \BGve_2(\omega), \omega \BGm_1(\omega),\omega \BGm_2(\omega)),\;z\in[-d,d],\;\BGk\in\bbC^2,\;\omega\in\bbC^+,\label{FAMatrix}.
\end{align}
Here $\BA=\BA(z)$ is a piecewise constant function of $z$ into $M_4(\bbC)$ (for fixed $\BGk$, $\omega$) with the following explicit representation in terms of the entries of the matrices $\BGve=[\ep_{ij}]$, $\BGm=[\mu_{ij}]$ in (\ref{DefDielecMagnMatrices1}), (\ref{DefDielecMagnMatrices2}):
\begin{eqnarray}\label{FAMatrixExplicit1}
\BA=\BV_{\parallel\parallel}-\BV_{\parallel\bot}\left(  \BV_{\bot\bot}\right)
^{-1}\BV_{\bot\parallel},
\end{eqnarray}
where
\begin{eqnarray}
&\BV_{\bot\bot}=
\frac{1}{c}\begin{bmatrix}
\omega\ep_{33} & 0\\
0 & \omega\mu_{33}
\end{bmatrix}
,\label{FAMatrixExplicit2}\\
&\BV_{\parallel\parallel}=
\frac{1}{c}\begin{bmatrix}
\omega\ep_{11} & \omega\ep_{12} & 0 & 0\\
\omega\ep_{21} & \omega\ep_{22} & 0 & 0\\
0 & 0 & \omega\mu_{11} & \omega\mu_{12}\\
0 & 0 & \omega\mu_{21} & \omega\mu_{22}
\end{bmatrix}
,\label{FAMatrixExplicit3}\\
&\BV_{\parallel\bot}=\frac{1}{c}\left[
\begin{array}
[c]{cc}%
\omega\ep_{13} & 0\\
\omega\ep_{23} & 0\\
0 & \omega\mu_{13}\\
0 & \omega\mu_{23}
\end{array}
\right]  +\left[
\begin{array}
[c]{cc}
0 & k_{2}\\
0 & -k_{1}\\
-k_{2} & 0\\
k_{1} & 0
\end{array}
\right]  ,\label{FAMatrixExplicit4}\\
&\BV_{\bot\parallel}=\frac{1}{c}\left[
\begin{array}
[c]{cccc}
\omega\ep_{31} & \omega\ep_{32} & 0 & 0\\
0 & 0 & \omega\mu_{31} & \omega\mu_{32}
\end{array}
\right]  +\left[
\begin{array}
[c]{cccc}
0 & 0 & -k_{2} & k_{1}\\
k_{2} & -k_{1} & 0 & 0
\end{array}
\right].\label{FAMatrixExplicit5}
\end{eqnarray}
From these matrices the normal electric and magnetic field components $\BGf$ are given in terms of their tangential components by
\begin{align}
\BGf&=\begin{bmatrix} E_3 & H_3 \end{bmatrix}^{T}=-(\BV_{\bot\bot})^{-1}\BV_{\bot\parallel}\BGy.\label{FNormalComponents}
\end{align}
The fact that the matrix $\BV_{\bot\bot}(z,\omega)$ is invertible follows immediately from the fact that the passivity
properties%
\index{passivity}
(\ref{FPassiveLayeredMedia}) imply
\begin{align}
\operatorname{Im}(\BV_{\bot\bot}(z,\omega))>0.
\end{align}

We will now prove in the next proposition [using the methods developed in the appendix of \citeAPY{Shipman:2013:RES} and in the Ph.D. thesis of \citeAPY{Welters:2011:TSL}], some fundamental properties associated to the ODE (\ref{FMaxwellODEs}). In particular, we will show that the solution of the initial-valued problem for the ODE (\ref{FMaxwellODEs}) depends analytically on the phase moduli.
\begin{Pro}\label{FPropositionOnTransferMatrix}
For each $z_0\in [-d,d]$ (and for fixed $\BGk,\omega$), the initial-value problem for the ODE (\ref{FMaxwellODEs}), i.e.,
\begin{align}
-i\BJ\frac{d\BGy}{dz}&=\BA(z)\BGy(z),\;\;\BGy(z_0)=\BGy_0,\label{ODEsIVP}
\end{align}
has a unique solution $\BGy$ in $(AC([-d,d]))^4$ for each $\BGy_0\in\mathbb{C}^4$ which is given by
\begin{align}
\BGy(z)=\BT(z_0,z)\BGy_0,\;\;z\in[-d,d],
\end{align}
where the $4\times 4$ matrix $\BT(z_0,z)$ is called the \textit{transfer matrix}.%
\index{transfer matrices}
This transfer matrix $\BT$ has the properties
\begin{align}\label{FTransferMatrixProp}
\BT(z_0,z)=\BT(z_1,z)\BT(z_0,z_1),\;\;\BT(z_0,z_1)^{-1}=\BT(z_1,z_0),\;\;\BT(z_0,z_0)=\BI,
\end{align}
for all $z_0, z_1, z\in[-d,d]$. Furthermore, the map
\begin{align}
\BT&=\BT(z_0,z)=\BT(z_0,z,\BGk, \omega)\notag\\
&=\BT(z_0,z, \BGk, \omega \BGve_1(\omega),\omega \BGve_2(\omega), \omega \BGm_1(\omega),\omega \BGm_2(\omega)),\;z_0,z\in[-d,d],\;\BGk\in\bbC^2,\;\omega\in\bbC^+,
\end{align}
belongs to $M_4(AC([-d,d]))$ as a function of $z$ (for fixed $z_0,\BGk,\omega$) and it is an analytic function as a map of $(\BGk,\omega)$ into $M_4(\bbC)$ (for fixed $z_0,z$).  More generally, the map
\begin{align}
\BZ\mapsto \BT(z_0,z, \BGk, \omega \BGve_1,\omega \BGve_2, \omega \BGm_1,\omega \BGm_2)
\end{align}
is analytic as a function of $\BZ=(\omega \BGve_1,\omega \BGve_2, \omega \BGm_1,\omega \BGm_2)\in (M_{3}^{+}(\bbC))^{4}$ into $M_4(\bbC)$ (for fixed $z_0,z,\BGk$).
\end{Pro}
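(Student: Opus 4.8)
The plan is to reduce the first-order system to standard form, solve it explicitly using the piecewise-constant structure of $\BA$, and then read off analyticity from the properties of the matrix exponential. First I would exploit the algebraic identity $\BJ^2=\BI$ (which follows from $\BJ^{-1}=\BJ$ in (\ref{FJRhoMatrix})): multiplying (\ref{ODEsIVP}) on the left by $i\BJ$ and using $1/(-i)=i$ turns it into the standard linear system
\[
\frac{d\BGy}{dz}=i\BJ\BA(z)\BGy(z),\qquad \BGy(z_0)=\BGy_0 .
\]
Since $\BA(\cdot)$ is piecewise constant it lies in $L^\infty([-d,d])\subseteq L^1([-d,d])$, so the Carath\'eodory/Picard--Lindel\"of theory for linear systems with integrable coefficients applies: the equivalent integral equation $\BGy(z)=\BGy_0+\int_{z_0}^{z} i\BJ\BA(u)\BGy(u)\,du$ has a unique solution, obtained as the uniformly convergent limit of Picard iterates, and because the right-hand side is the integral of an integrable function the solution is automatically in $(AC([-d,d]))^4$. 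Defining $\BT(z_0,z)$ to be the matrix whose columns are the solutions associated with the standard basis initial data yields $\BGy(z)=\BT(z_0,z)\BGy_0$, and the cocycle relations (\ref{FTransferMatrixProp}) follow from uniqueness, since both $\BT(z_1,z)\BT(z_0,z_1)$ and $\BT(z_0,z)$ solve the same matrix initial-value problem issued from $z_0$ and hence coincide (the remaining identities are immediate). That $\BT(z_0,\cdot)\in M_4(AC([-d,d]))$ is then clear, each entry being absolutely continuous.

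The decisive simplification is that, because $\BA$ takes only the two constant values $\BA_1$ (on the outer region $\Omega_1$) and $\BA_2$ (on the inner region $\Omega_2$), the propagator is an explicit finite product of matrix exponentials. For instance, when $z_0\in\Omega_{1,-}$ and $z\in\Omega_{1,+}$ the cocycle relation across the interfaces $\pm d_2$ gives
\[
\BT(z_0,z)=e^{\,i\BJ\BA_1(z-d_2)}\,e^{\,2id_2\,\BJ\BA_2}\,e^{\,i\BJ\BA_1(-d_2-z_0)},
\]
and for every other configuration of $z_0,z$ one obtains an analogous product of at most three such factors. This reduces every analyticity claim to the analyticity of the exponents $i\BJ\BA_j\cdot(\text{layer length})$ in the parameters, because the matrix exponential $\exp\colon M_4(\bbC)\to M_4(\bbC)$ is entire (its defining power series converges on all of $M_4(\bbC)$) and finite products of analytic matrix-valued maps are analytic.

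It therefore remains to show that $\BA_1$ and $\BA_2$ depend analytically on the parameters. Here the key point is the Schur-complement formula (\ref{FAMatrixExplicit1}): the blocks $\BV_{\parallel\parallel},\BV_{\parallel\bot},\BV_{\bot\parallel},\BV_{\bot\bot}$ in (\ref{FAMatrixExplicit2})--(\ref{FAMatrixExplicit5}) are affine in the entries of $\BGk$ and of $\omega\BGve_j,\omega\BGm_j$, hence trivially analytic, and the only nonpolynomial operation is the inversion of $\BV_{\bot\bot}$. Since $\operatorname{Im}(\BV_{\bot\bot})>0$ on the relevant domain (a diagonal entry of a matrix with positive-definite imaginary part has positive imaginary part, hence is nonzero), $\BV_{\bot\bot}$ is invertible throughout, and Theorem~\ref{FTh.analyinvop} yields the analyticity of $(\BV_{\bot\bot})^{-1}$, and thus of $\BA_j$, jointly in $(\BGk,\omega)\in\bbC^2\times\bbC^{+}$ (using that $\omega\BGve_j(\omega),\omega\BGm_j(\omega)$ are analytic by (\ref{FDielecMagnHerglotz})) and likewise jointly in $\BZ\in(M_3^{+}(\bbC))^4$. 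Composing with the entire exponential and forming the finite product then delivers the asserted analyticity of $\BT$ in $(\BGk,\omega)$ and in $\BZ$.

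The main obstacle is precisely this last step: certifying \emph{joint} (multivariable) analyticity of $\BA_j$ in the matrix variable $\BZ$, rather than merely separate analyticity in each entry. This is handled cleanly by applying Theorem~\ref{FTh.analyinvop} directly in several variables to the inversion $\BV_{\bot\bot}\mapsto(\BV_{\bot\bot})^{-1}$; alternatively, one verifies analyticity in each scalar entry of $\BZ$ separately and invokes Hartogs' Theorem (Theorem~\ref{FThm.Hartog}) to upgrade to joint analyticity. Everything else is either standard linear ODE theory or a direct consequence of the entireness of the exponential and the closure of analytic maps under finite products and composition.
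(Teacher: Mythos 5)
Your proof is correct, but the analyticity argument takes a genuinely different route from the paper's. The paper never uses the piecewise-constant structure inside the proof: it shows that $(\BGk,\omega)\mapsto\BA$ and $(\BGk,\BZ)\mapsto\BA$ are analytic into $M_4(L^1(I))$, recasts the matrix initial-value problem as the integral equation $\BCT[i\BJ^{-1}\BA,z_0]\BGY=\BI$ with $\BCT[\BM,z_0]=\mathbf{1}-\BCI[\BM,z_0]$ an isomorphism of $M_4(AC(I))$, and then applies Theorem~\ref{FTh.analyinvop} to the representation $\BT(z_0,\cdot)=\BCT[i\BJ^{-1}\BA,z_0]^{-1}(\BI)$ to get analyticity of $\BT$ as an $M_4(AC(I))$-valued map, finishing with the continuous embedding $AC(I)\hookrightarrow C(I)$ to obtain the pointwise statements. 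You instead solve the system explicitly: since $\BA$ takes only the two values $\BA_1,\BA_2$, the propagator is a finite product of matrix exponentials (essentially the formula the paper relegates to the Remark following the proposition, equations \eqref{FTransferMatrixSimpleExpRepr1a}--\eqref{FTransferMatrixSimpleExpRepr1b}, and note your ordering of the factors is the consistent one given the cocycle convention $\BT(z_0,z)=\BT(z_1,z)\BT(z_0,z_1)$), and analyticity then follows from entireness of $\exp$, analyticity of the Schur complement \eqref{FAMatrixExplicit1} via invertibility of $\BV_{\bot\bot}$, and closure of analytic maps under composition and finite products. Your route is more elementary and self-contained; what you give up is generality: the operator-theoretic argument works verbatim for any $\BA\in M_4(L^1(I))$ depending analytically on parameters (e.g.\ continuously varying or many-layer media), and it yields the stronger conclusion that $\BT(z_0,\cdot)$ is analytic as a map into the Banach space $M_4(AC(I))$, i.e.\ uniformly in $z$, rather than only for each fixed $z$. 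The existence/uniqueness, transfer-matrix construction, and cocycle identities in your write-up coincide with the paper's treatment.
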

\begin{proof}
First, it follows from Hartogs' Theorem%
\index{Hartogs' theorem}
(see Theorem \ref{FThm.Hartog}), the hypotheses (\ref{DefDielecMagnMatrices1}), (\ref{DefDielecMagnMatrices2}), (\ref{FDielecMagnHerglotz}), the formulas (\ref{FAMatrix})--(\ref{FAMatrixExplicit5}), and Theorem \ref{FTh.analyinvop} that
\begin{align}
(\BGk,\omega)\mapsto \BA(\cdot, \BGk, \omega \BGve_1(\omega),\omega \BGve_2(\omega), \omega \BGm_1(\omega),\omega \BGm_2(\omega))
\end{align}
is analytic as a function into $M_4(L^1(I))$, where $I=[-d,d]$, from $\bbC^2\times \bbC^{+}$. And, more generally, it follows from these theorems, hypotheses, and formulas that the map
\begin{align}
(\BGk,\BZ)\mapsto \BA(\cdot, \BGk, \BZ)
\end{align}
is analytic as a function of $(\BGk,\BZ)\in \bbC^2\times(M_{3}^{+}(\bbC))^{4}$ into $M_4(L^1(I))$, where $\BZ$ is the variable $\BZ=(\omega \BGve_1,\omega \BGve_2, \omega \BGm_1,\omega \BGm_2)$.

In particular, for either fixed variables $(\BGk,\omega)\in\bbC^2\times\bbC^{+}$ or $(\BGk,\BZ)\in \bbC^2\times(M_{3}^{+}(\bbC))^{4}$,
we have $\BA=\BA(z)$ from (\ref{FAMatrix}) is in $M_4(L^1(I))$. Fix a $z_0\in I$. Then by the theory of linear ordinary differential equations [see, for instance,
Theorem 1.2.1 in Chapter 1 of \citeAPY{Zettl:2005:SLT}], the initial-value problem (\ref{ODEsIVP}) has a unique solution $\BGy$ in $(AC(I))^4$
for each $\BGy_0\in\mathbb{C}^4$. Denote the standard orthonormal basis vectors of $\bbR^4$ by $\Bw_j$, for $j=1,2,3,4$. Let $\BGy_j\in (AC(I))^4$
denote the unique solution of the ODE (\ref{FMaxwellODEs}) satisfying $\BGy_j(z_0)=\Bw_j$, for $j=1,2,3,4$. Now
let $\BT(z_0,z)=[\BGy_1(z)|\BGy_2(z)|\BGy_3(z)|\BGy_4(z)]\in M_4(\bbC)$ denote the $4\times 4$ matrix whose columns
are $\BT(z_0,z)\Bw_j=\BGy_j(z)$ for $j=1,2,3,4$ and $z\in I$. This matrix $\BT(z_0,z)$ is known in the
electrodynamics of layered media as the transfer matrix.%
\index{transfer matrices}

Now it follows immediately from the uniqueness of the solution to the initial-value problem (\ref{ODEsIVP}) 
and the definition of the transfer matrix $\BT(z_0,z)$,
that $\BT=\BT(z_0,z)$ as a function of $z\in I$ belongs to $M_4(AC(I))$, it has the properties (\ref{FTransferMatrixProp}), and it is the
unique matrix-valued function in $M_4(AC(I))$ satisfying: if $\BGy_0\in\bbC^4$ then $\BGy(z)=\BT(z_0,z)\BGy_0$ for all $z\in I$ is an $(AC(I))^4$
solution to the initial-value problem (\ref{ODEsIVP}). From this uniqueness property of the transfer matrix $\BT(z_0,z)$,
it follows that $\BT(z_0,z)$ is the unique solution to the initial-value problem:
\begin{align}\label{FTransferMatrixODEs}
\BGY^{\prime}(z)=i\BJ^{-1}\BA(z)\BGY(z),\;\BGY(z_0)=\BI,\;\BGY\in M_4(AC(I)),
\end{align}
where $\BI\in M_4(\bbC)$ is the identity matrix.
 
Now we wish to derive an explicit representation for $\BT(z_0,z)$ in terms of $\BJ$ and $\BA$. To do this we first introduce some results from the
integral operator approach to the theory of linear ODEs. For fixed $\BM\in M_4(L^1(I))$, define the linear map $\BCI[\BM,z_0]:M_4(AC(I))\rightarrow M_4(AC(I))$
by
\begin{align}
(\BCI[\BM,z_0]\BN)(z)=\int_{z_0}^{z}\BM(u)\BN(u)du,\;\BN\in M_4(AC(I)),\;z\in I.
\end{align}
It follows that $\BCI[\BM,z_0]$ is a continuous linear operator on the Banach space $M_4(AC(I))$, i.e., it belongs to $L(M_4(AC(I)),M_4(AC(I)))$.
Next, define the linear map $\BCT[\BM,z_0]:M_4(AC(I))\rightarrow M_4(AC(I))$ by
\begin{align}
\BCT[\BM,z_0]=\mathbf{1}-\BCI[\BM,z_0],
\end{align}
where $\mathbf{1}\in L(M_4(AC(I)),M_4(AC(I)))$ denotes the identity operator on $M_4(AC(I))$. Then it follows that $\BCT[\BM,z_0]\in L(M_4(AC(I)),M_4(AC(I)))$
and, moreover, $\BCT[\BM,z_0]$ is invertible with $\BCT[\BM,z_0]^{-1}\in L(M_4(AC(I)),M_4(AC(I)))$, i.e., $\BCT[\BM,z_0]$ is an isomorphism.
The fact that $\BCT[\BM,z_0]$ is invertible follows immediately from the existence and uniqueness of the solution $\BY\in M_4(AC(I))$
for each $\BC\in M_4(\bbC)$, $\BF\in M_4(L^1(I))$ to the inhomogeneous initial-value problem [see, for instance,
Theorem 1.2.1 in Chapter 1 of \citeAPY{Zettl:2005:SLT}]:
\begin{align}
\BY^{\prime}(z)=\BM(z)\BY(z)+\BF(z),\;\BY(z_0)=\BC.
\end{align}
In other words, $\BY$ is the unique solution in $M_4(AC(I))$ to the integral equation
\begin{align}
\BCT[\BM,z_0]\BY=\BCI[\BM,z_0]\BF+\BC.
\end{align}
Hence, the solution is given explicitly by
\begin{align}
\BY=\BCT[\BM,z_0]^{-1}(\BCI[\BM,z_0]\BF+\BC).
\end{align}
In particular, it follows from this representation and the fact that the transfer matrix $\BT(z_0,z)$ is the unique solution to the initial-value
problem (\ref{FTransferMatrixODEs}) that with $\BF=\mathbf{0}$, $\BC=\BI$ in the notation above,
\begin{align}\label{FTransferMatrixExplicitRep}
\BT(z_0,\cdot)=\BCT[i\BJ^{-1}\BA,z_0]^{-1}(\BI),
\end{align}
where $\BA=\BA(z)$ as you will recall belongs to $M_4(L^1(I))$ as a function of $z\in I$ (ignoring its dependence on the other variables)
and hence so does $i\BJ^{-1}\BA$.

Now since $i\BJ^{-1}\BA$ is an analytic function of either of the variables $(\BGk,\omega)$ or $(\BGk,\BZ)$ into $M_4(L^1(I))$ as a function of $z\in I$,
for fixed $z_0$, then it follows immediately from this, the representation (\ref{FTransferMatrixExplicitRep}), and
Theorem \ref{FTh.analyinvop} that $(\BGk,\omega)\mapsto \BT(z_0,z, \BGk,\omega)$ and $(\BGk,\BZ)\mapsto \BT(z_0,z, \BGk,\BZ)$ are
analytic functions into $M_4(AC(I))$ as a function of $z\in I$, for fixed $z_0$.  Finally, the proof of the rest of this proposition
now follows immediately from these facts and the fact that the Banach space $AC(I)$ can be continuously embedded into the Banach space $C(I)$ of
continuous functions from $I$ into $\bbC$ equipped with the sup norm $||f||_{\infty}=\sup_{z\in I}|f(z)|$, that is, the
identity map $\iota:AC(I)\rightarrow C(I)$ between these two Banach spaces [i.e., $\iota(f)=f$ for $f\in AC(I)$] is a continuous (and hence bounded)
linear map under their respective norms [i.e., $\iota\in L(AC(I),C(I))$].
\end{proof}

\begin{Rem}
Using Proposition \ref{FPropositionOnTransferMatrix} and due to the simplicity of the layered media considered we can derive a simple explicit representation
of the transfer matrix $\BT(z_0,z)$ for all $z_0,z\in [-d,d]$.  First, the transfer matrix $\BT(-d,z)$, $z\in[-d,d]$ takes on the simple form
\begin{align}\label{FTransferMatrixSimpleExpRepr1a}
\BT(-d,z)=\left\{
\begin{array}{cc}
e^{i\BJ\BA_{1}(z+d)}, & -d\leq z\leq -d_{2}, \\
e^{i\BJ\BA_{1}(d-d_{2})}e^{i\BJ\BA_{2}(z+d_{2})}, & -d_{2}\leq z\leq d_{2}, \\
e^{i\BJ\BA_{1}(d-d_{2})}e^{i\BJ\BA_{2}(2d_{2})}e^{i\BJ\BA_{1}(z-d_{2})}, & d_{2}\leq z\leq
d,
\end{array}
\right.
\end{align}
where $\BA_1$ and $\BA_2$ are the matrices (\ref{FAMatrix}) for a $z$-independent homogeneous medium filled with only material 1 (with permittivity and permeability $\BGve_1$ and $\BGm_1$) and with only material 2 (with permittivity and permeability $\BGve_2$ and $\BGm_2$), respectively (see \ref{Ffig:layered}).

Therefore, in terms of this explicit form for $\BT(-d,z)$, it follows from (\ref{FTransferMatrixProp}) that the
transfer matrix $\BT(z_0,z)$, $z_0,z\in [-d,d]$ is given explicitly in terms of (\ref{FTransferMatrixSimpleExpRepr1a}) by
\begin{align}\label{FTransferMatrixSimpleExpRepr1b}
\BT(z_0,z)=\BT(-d,z)\BT(z_0,-d)=\BT(-d,z)\BT(-d,z_0)^{-1}.
\end{align}
\end{Rem}

%%%%%%%%%%%%%%%%%%%%%%%%%%%%%%%%%%%%%%%%%%%%%%%%%%%%%%%%%%%%%%%%%%%
\subsection{Electromagnetic Dirichlet-to-Neumann Map}\label{subsection2.1}
%%%%%%%%%%%%%%%%%%%%%%%%%%%%%%%%%%%%%%%%%%%%%%%%%%%%%%%%%%%%%555
Now every solution to Maxwell's equations (\ref{FMaxwellEqsTimeHar}) of the form (\ref{FTangentialBlochWaves}) has in terms of its tangential components (\ref{FTangentialComponents}) a corresponding solution of the ODE (\ref{FMaxwellODEs}) with normal components given by (\ref{FNormalComponents}). And conversely, every solution of the ODE (\ref{FMaxwellODEs}) gives the tangential components of a unique solution to equations (\ref{FMaxwellEqsTimeHar}) of the form (\ref{FTangentialBlochWaves}) with normal components expressed
in terms of its tangential components by (\ref{FNormalComponents}). We use this correspondence to now define the electromagnetic ``Dirichlet-to-Neumann''%
\index{Dirichlet-to-Neumann map!electromagnetism}%
\index{DtN map|see{Dirichlet-to-Neumann map}}
(DtN) map in terms of the transfer matrix%
\index{transfer matrices}
$\BT$ whose properties are described in Proposition \ref{FPropositionOnTransferMatrix}.

The DtN map is a function
\begin{gather}
\Lambda=\Lambda(z_0,z_1)=\Lambda(z_0,z_1,\BGk,\omega  \BGve_1(\omega),\omega \BGve_2(\omega),\omega \BGm_1(\omega),\omega \BGm_2(\omega)),\\
 z_0,z_1\in[-d,d],\;z_0<z_1, \BGk\in \mathbb{C}^2,\;\omega\in\bbC^{+},\notag
\end{gather}
which can be defined as the block operator matrix
\begin{gather}
\Lambda(z_0,z_1)\begin{bmatrix}
\textbf{E}\times\textbf{n}|_{z=z_1}\\
\textbf{E}\times\textbf{n}|_{z=z_0}
\end{bmatrix}
=
\begin{bmatrix}
i\textbf{n}\times\textbf{H}\times\textbf{n}|_{z=z_1}\\
i\textbf{n}\times\textbf{H}\times\textbf{n}|_{z=z_0}
\end{bmatrix},
\end{gather}
where $\BE,\BH$ denote a solution of the time-harmonic Maxwell's equations (\ref{FMaxwellEqsTimeHar}) of the form (\ref{FTangentialBlochWaves}), i.e., a function of the form (\ref{FTangentialBlochWaves}) whose tangential components $\BGy$ with the form (\ref{FTangentialComponents}) satisfy the ODE (\ref{FMaxwellODEs}) and whose normal components are given in terms of these tangential components $\BGy$ by (\ref{FNormalComponents}).

A more explicit definition of this DtN map can be given as follows. First, on $\mathbb{C}^3$, with respect to the standard orthonormal basis vectors, we have the matrix representations
\begin{align}\label{FDefe3cross}
\textbf{e}_3\times = \begin{bmatrix}
0 & -1 & 0\\
1 & 0 & 0\\
0 & 0 & 0
\end{bmatrix},\;\;-\textbf{e}_3\times\textbf{e}_3\times = \begin{bmatrix}
1 & 0 & 0\\
0 & 1 & 0\\
0 & 0 & 0
\end{bmatrix},
\end{align}
and this allows us to write $\textbf{E}\times\textbf{n}=-\textbf{n}\times\textbf{E}$ and $\textbf{n}\times\textbf{H}\times\textbf{n}=-\textbf{n}\times\textbf{n}\times\textbf{H}$ as matrix multiplication so that we can write $\Lambda$ as a $6\times 6$ matrix which can be written in the $2\times 2$ block matrix form as
\begin{align}\label{FPreDefLambdaMatrixBlocks}
\Lambda = \begin{bmatrix}
\Lambda_{11} & \Lambda_{12}\\
\Lambda_{21} & \Lambda_{22}
\end{bmatrix}.
\end{align}
We now want to get an explicit expression of this block form. Thus, we define the projections
\begin{align}\label{FDefProjsForDtNMap}
\BP_{\mathfrak{t}}=\begin{bmatrix}
1 & 0  \\
0 & 1 \\
0 & 0 \\
\end{bmatrix},\;\BQ_{\mathfrak{t},1}=\begin{bmatrix}
1 & 0 & 0 & 0 \\
0 & 1 & 0 & 0 \\
\end{bmatrix},\;\;
\BQ_{\mathfrak{t},2}=\begin{bmatrix}
0 & 0 & 1 & 0 \\
0 & 0 & 0 & 1 \\
\end{bmatrix}.
\end{align}
It follows from this notation that
\begin{align*}
\textbf{E}\times\textbf{e}_3&=-e^{i(k_1x+k_2y)}\textbf{e}_3\times \BP_{\mathfrak{t}}\left[\BQ_{\mathfrak{t},1}\BGy(z)\right],\;\;\textbf{n}\times\textbf{H}\times\textbf{n}=e^{i(k_1x+k_2y)}\BP_{\mathfrak{t}}\left[\BQ_{\mathfrak{t},2}\BGy(z)\right].
\end{align*}
Hence, we have
\begin{gather*}
\begin{bmatrix}
i\textbf{n}\times\textbf{H}\times\textbf{n}|_{z=z_1}\\
i\textbf{n}\times\textbf{H}\times\textbf{n}|_{z=z_0}
\end{bmatrix}
=
ie^{i(k_1x+k_2y)}
\begin{bmatrix}
\BP_{\mathfrak{t}} & 0\\
0 & \BP_{\mathfrak{t}}
\end{bmatrix}
\begin{bmatrix}
\BQ_{\mathfrak{t},2}\psi(z_1)\\
\BQ_{\mathfrak{t},2}\psi(z_0)
\end{bmatrix}\\
=
ie^{i(k_1x+k_2y)}
\begin{bmatrix}
\BP_{\mathfrak{t}} & 0\\
0 & \BP_{\mathfrak{t}}
\end{bmatrix}
\BGG(z_0,z_1)\begin{bmatrix}
\BQ_{\mathfrak{t},1}\psi(z_1)\\
\BQ_{\mathfrak{t},1}\psi(z_0)
\end{bmatrix}\\
=
i\begin{bmatrix}
\BP_{\mathfrak{t}} & 0\\
0 & \BP_{\mathfrak{t}}
\end{bmatrix}
\BGG(z_0,z_1)
\begin{bmatrix}
\BP_{\mathfrak{t}} & 0\\
0 & \BP_{\mathfrak{t}}
\end{bmatrix}^{T}
\begin{bmatrix}
\textbf{n}\times\textbf{E}\times\textbf{n}|_{z=z_1}\\
\textbf{n}\times\textbf{E}\times\textbf{n}|_{z=z_0}
\end{bmatrix}\\
=
i\begin{bmatrix}
\BP_{\mathfrak{t}} & 0\\
0 & \BP_{\mathfrak{t}}
\end{bmatrix}
\BGG(z_0,z_1)
\begin{bmatrix}
\BP_{\mathfrak{t}} & 0\\
0 & \BP_{\mathfrak{t}}
\end{bmatrix}^{T}
\begin{bmatrix}
\textbf{e}_3\times & 0\\
0 & -\textbf{e}_3\times\\
\end{bmatrix}
\begin{bmatrix}
\textbf{E}\times\textbf{n}|_{z=z_1}\\
\textbf{E}\times\textbf{n}|_{z=z_0}
\end{bmatrix},
\end{gather*}
where we have used the fact that since
\begin{gather*}
\begin{bmatrix}
\Bu_1\\
\Bv_1
\end{bmatrix}
=
\begin{bmatrix}
\BQ_{\mathfrak{t},1}\BGy(z_1)\\
\BQ_{\mathfrak{t},2}\BGy(z_1)
\end{bmatrix},\;\;
\begin{bmatrix}
\Bu_0\\
\Bv_0
\end{bmatrix}
=
\begin{bmatrix}
\BQ_{\mathfrak{t},1}\BGy(z_0)\\
\BQ_{\mathfrak{t},2}\BGy(z_0)
\end{bmatrix},\;\;\BT(z_0,z_1)\begin{bmatrix}
\Bu_0\\
\Bv_0
\end{bmatrix}
=
\begin{bmatrix}
\Bu_1\\
\Bv_1
\end{bmatrix},
\end{gather*}
then by Proposition \ref{FProbDefinesGamma} (given later in Section \sect{Fmain}) we must have
\begin{gather*}
\BGG(z_0,z_1)\begin{bmatrix}
\BQ_{\mathfrak{t},1}\BGy(z_1)\\
\BQ_{\mathfrak{t},1}\BGy(z_0)
\end{bmatrix}
=
\begin{bmatrix}
\BQ_{\mathfrak{t},2}\BGy(z_1)\\
\BQ_{\mathfrak{t},2}\BGy(z_0)
\end{bmatrix},
\end{gather*}
where $\BGG(z_0,z_1)$ is defined in (\ref{FGammaMatrix}) [which is well-defined provided the matrix $\BT_{12}(z_0,z_1)$ in the block decomposition of $\BT(z_0,z_1)$ in (\ref{DefTransferMatrixBlockDecomp}) is invertible]. Therefore, the DtN map $\Lambda(z_0,z_1)$ can be defined explicitly as follows.

\begin{Def}[Electromagnetic Dirichlet-to-Neumann map]\label{FDefDtNMap}%
\index{Dirichlet-to-Neumann map!electromagnetism}
The electromagnetic DtN map $\Lambda(z_0,z_1)$ is defined to be the $6\times 6$ matrix (\ref{FPreDefLambdaMatrixBlocks}) defined in terms of the $4\times 4$ matrix $\BGG(z_0,z_1)$ in (\ref{FGammaMatrix}) and the $3\times 2$ matrix $\BP_{\mathfrak{t}}$ in (\ref{FDefProjsForDtNMap}) by
\begin{gather}
\Lambda(z_0,z_1)=
i\begin{bmatrix}\label{FGammaToLambda}
\BP_{\mathfrak{t}} & 0\\
0 & \BP_{\mathfrak{t}}
\end{bmatrix}
\BGG(z_0,z_1)
\begin{bmatrix}
\BP_{\mathfrak{t}} & 0\\
0 & \BP_{\mathfrak{t}}
\end{bmatrix}^{T}
\begin{bmatrix}
\mathbf{e}_3\times & 0\\
0 & -\mathbf{e}_3\times\\
\end{bmatrix},
\end{gather}
and in the $2\times 2$ block matrix form its entries are the $3\times 3$ matrices
\begin{align}\label{FGammaToLambdaBlockEntries}
\Lambda_{11}(z_0,z_1)&=i\BP_{\mathfrak{t}}\BGG_{11}(z_0,z_1)\BP_{\mathfrak{t}}^{T}\mathbf{e}_3\times ,\\
\Lambda_{12}(z_0,z_1)&= -i\BP_{\mathfrak{t}}\BGG_{12}(z_0,z_1)\BP_{\mathfrak{t}}^{T}\mathbf{e}_3\times ,\\
\Lambda_{21}(z_0,z_1)&=i\BP_{\mathfrak{t}}\BGG_{21}(z_0,z_1)\BP_{\mathfrak{t}}^{T}\mathbf{e}_3\times ,\\
\Lambda_{22}(z_0,z_1)&=-i\BP_{\mathfrak{t}}\BGG_{22}(z_0,z_1)\BP_{\mathfrak{t}}^{T}\mathbf{e}_3\times,
\end{align}
where $\mathbf{e}_3\times $ is the $3\times 3$ matrix in (\ref{FDefe3cross}).
\end{Def}

Now for any $z_0,z_1\in [-d,d]$, $z_0<z_1$, we want to know whether the DtN map $\Lambda(z_0,z_1)$ is well-defined or not. The next theorem addresses this.
\begin{Thm}\label{FThm:LayeredDtnMapWellDef}
If $\,\operatorname{Im}\omega>0$ and $\BGk\in \mathbb{R}^2$ then for any $3\times 3$ matrix-valued Herglotz functions%
\index{Herglotz functions!matrix-valued}
$\omega \BGve_j(\omega)$, $\omega \BGm_j(\omega)$, $j=1,2$ with range in $M_3^+(\mathbb{C})$, the electromagnetic DtN map $\Lambda(z_0,z_1,\BGk,\omega  \BGve_1(\omega),\omega \BGve_2(\omega),\omega \BGm_1(\omega),\omega \BGm_2(\omega))$ is well-defined.
\end{Thm}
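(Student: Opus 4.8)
The plan is to reduce the theorem to a single invertibility statement and then settle it with an energy (Poynting-type) argument. As remarked just before the theorem, $\BGG(z_0,z_1)$ — and hence $\Lambda(z_0,z_1)$ through Definition \ref{FDefDtNMap} — is well-defined provided the off-diagonal block $\BT_{12}(z_0,z_1)$ in the decomposition (\ref{DefTransferMatrixBlockDecomp}) of the transfer matrix is invertible. So the whole statement comes down to proving that $\BT_{12}(z_0,z_1)$ is invertible under the stated hypotheses $\operatorname{Im}\omega>0$ and $\BGk\in\mathbb{R}^{2}$.

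First I would record the conserved-flux identity for the ODE (\ref{FMaxwellODEs}). Writing the equation as $\BGy'=i\BJ\BA\BGy$ (using $\BJ^{-1}=\BJ=\BJ^{*}$ from (\ref{FJRhoMatrix})) and differentiating the Hermitian form $\BGy^{*}\BJ\BGy$ gives
\begin{align}
\frac{d}{dz}\bigl(\BGy(z)^{*}\BJ\BGy(z)\bigr)=-2\,\BGy(z)^{*}\operatorname{Im}(\BA(z))\,\BGy(z),
\end{align}
which is the discrete analogue of Poynting's theorem. Writing $\BGy=[\,\Be\ \ \Bh\,]^{T}$ with $\Be=\BQ_{\mathfrak{t},1}\BGy$ (tangential $\BE$) and $\Bh=\BQ_{\mathfrak{t},2}\BGy$ (tangential $\BH$), one checks from (\ref{FJRhoMatrix}) that $\BGy^{*}\BJ\BGy=\Be^{*}\BGr\Bh+\Bh^{*}\BGr^{*}\Be$, which vanishes whenever $\Be=0$.

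The key step is the sign: I claim $\operatorname{Im}(\BA(z))>0$ on each layer, and the cleanest route avoids any Hermiticity assumption on the material tensors. Assemble the full $6\times6$ matrix $\BV=\begin{bmatrix}\BV_{\parallel\parallel}&\BV_{\parallel\bot}\\\BV_{\bot\parallel}&\BV_{\bot\bot}\end{bmatrix}$ from (\ref{FAMatrixExplicit2})--(\ref{FAMatrixExplicit5}); its $\BGk$-dependent part is Hermitian precisely because $\BGk\in\mathbb{R}^{2}$ (the two off-diagonal $\BGk$-blocks are transposes of one another), while its material part is a permutation of $\frac{1}{c}\operatorname{diag}(\omega\BGve,\omega\BGm)$, whose imaginary part is positive definite by the passivity (\ref{FPassiveLayeredMedia}). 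Hence $\operatorname{Im}(\BV)>0$, i.e.\ $\BV\in M_{6}^{+}(\bbC)$. Now $\BA$ is the Schur complement of $\BV_{\bot\bot}$ in $\BV$, so by the block-inverse formula $\BA^{-1}$ is the tangential ($\parallel\parallel$) block of $\BV^{-1}$. Applying twice the invariance of $M_{n}^{+}(\bbC)$ under $\BM\mapsto-\BM^{-1}$ recorded in the introduction: $\BV\in M_{6}^{+}(\bbC)$ gives $-\BV^{-1}\in M_{6}^{+}(\bbC)$; its $\parallel\parallel$ principal submatrix $-\BA^{-1}$ then lies in $M_{4}^{+}(\bbC)$ (a principal submatrix inherits a positive-definite imaginary part); and applying the invariance once more yields $-(-\BA^{-1})^{-1}=\BA\in M_{4}^{+}(\bbC)$, i.e.\ $\operatorname{Im}(\BA)>0$. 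I expect this Schur-complement step to be the main obstacle, since the usual ``Schur complement of a matrix with positive imaginary part'' argument needs $\BV_{\bot\parallel}=\BV_{\parallel\bot}^{*}$, which fails for non-Hermitian $\BGve,\BGm$; routing through the inversion-invariance sidesteps this.

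Finally I would argue by contradiction. If $\BT_{12}(z_0,z_1)$ were singular, pick $\Bh_{0}\neq0$ with $\BT_{12}(z_0,z_1)\Bh_{0}=0$ and let $\BGy$ be the (nonzero) solution of (\ref{FMaxwellODEs}) from Proposition \ref{FPropositionOnTransferMatrix} with $\BQ_{\mathfrak{t},1}\BGy(z_0)=0$ and $\BQ_{\mathfrak{t},2}\BGy(z_0)=\Bh_{0}$. The block relation gives $\BQ_{\mathfrak{t},1}\BGy(z_1)=\BT_{12}(z_0,z_1)\Bh_{0}=0$, so the tangential electric field vanishes at both $z_0$ and $z_1$; by the second paragraph both boundary values of $\BGy^{*}\BJ\BGy$ are therefore zero. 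Integrating the flux identity over $[z_0,z_1]$ then forces $\int_{z_0}^{z_1}\BGy^{*}\operatorname{Im}(\BA)\BGy\,dz=0$, and since $\operatorname{Im}(\BA)$ is piecewise constant and positive definite this gives $\BGy\equiv0$ almost everywhere, hence everywhere by continuity of the $AC$ solution — contradicting $\BGy(z_0)\neq0$. Thus $\BT_{12}(z_0,z_1)$ is invertible and $\Lambda(z_0,z_1)$ is well-defined.
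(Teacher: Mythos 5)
Your proof is correct, and it reaches the invertibility of $\BT_{12}(z_0,z_1)$ by a genuinely different route than the paper. The paper first proves the global positive-definiteness $\BJ-\BT(z_0,z_1)^*\BJ\BT(z_0,z_1)>0$ via the physical Poynting theorem for the full fields $\BE,\BH$ (Theorem \ref{FThmPassivityImpliesDtNMapDefined}, where the dissipation is written as $\int(\BH,\operatorname{Im}[\omega\BGm]\BH)+(\BE,\operatorname{Im}[\omega\BGve]\BE)\,dz$ with the normal components reconstructed from \eqref{FNormalComponents}), and then invokes the purely algebraic Proposition \ref{FProp1stKeyDtNMapDefined}: the block form \eqref{FJminusTastJTMatrix} forces $\operatorname{Re}(\BT_{12}^*\BGr\BT_{22})>0$, and a matrix with positive definite real part is invertible. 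You instead stay entirely at the level of the reduced $4\times4$ ODE: you establish the pointwise fact $\operatorname{Im}\BA(z)>0$ — which the paper never states — by recognizing $\BA$ as the Schur complement of $\BV_{\bot\bot}$ in a $6\times6$ matrix $\BV$ with positive definite imaginary part and routing through the invariance of $M_n^+(\bbC)$ under $\BM\mapsto-\BM^{-1}$ together with the principal-submatrix property; your flux identity $\frac{d}{dz}(\BGy^*\BJ\BGy)=-2\,\BGy^*\operatorname{Im}(\BA)\BGy$ then kills a putative null vector of $\BT_{12}$ directly. Both arguments are energy-dissipation arguments at heart, and your $\BGk\in\bbR^2$ hypothesis enters in the same place (Hermiticity of the wavevector contribution). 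What the paper's route buys is more information that it reuses later: invertibility of all four blocks $\BT_{ij}$ and the quantitative inequality $\BJ-\BT^*\BJ\BT>0$, which feeds directly into the proof that $\operatorname{Im}\Lambda(z_0,z_1)>0$ in the following theorem. What your route buys is self-containedness — no need to reconstruct the normal components — plus the independently useful fact $\BA\in M_4^+(\bbC)$, proved by an argument that, as you note, does not require $\BV_{\bot\parallel}=\BV_{\parallel\bot}^*$ and so survives non-symmetric (e.g., gyrotropic) material tensors.
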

\begin{proof}
Let $\omega \BGve_j(\omega)$, $\omega \BGm_j(\omega)$, $j=1,2$ be any $3\times 3$ matrix-valued Herglotz functions with range in $M_3^+(\mathbb{C})$. Choose any values $\omega\in\bbC$ and $\BGk$ with $\Imag{\omega}>0$ and $\BGk\in\bbR^2$. Consider the time-harmonic Maxwell's equations (\ref{FMaxwellEqsTimeHar}) for the plane parallel layered media in \ref{Ffig:layered} at the frequency $\omega$ for solutions of the form (\ref{FTangentialBlochWaves}) with tangential wavevector $\BGk$, where the dielectric permittivity $\BGve$ and magnetic permeability $\BGm$ are defined in (\ref{DefDielecMagnMatrices1}) and (\ref{DefDielecMagnMatrices2}).

For $z_0,z_1\in[-d,d]$ with $z_0<z_1$, the transfer matrix%
\index{transfer matrices}
(defined in Section \ref{SecFormProblemLayered}) of the layered media with tensors $\BGve(z,\omega)$, $\BGm(z,\omega)$ is $\BT(z_0,z_1,\BGk,\omega  \BGve_1(\omega),\omega \BGve_2(\omega),\omega \BGm_1(\omega),\omega \BGm_2(\omega))$. For simplicity we will suppress the dependency on the other parameters and denote this transfer matrix by $\BT(z_0,z_1)$. It now follows from the passivity property (\ref{FPassiveLayeredMedia}) and Theorem \ref{FThmPassivityImpliesDtNMapDefined}, given below, that the matrix $\BJ-\BT(z_0,z_1)^*\BJ\BT(z_0,z_1)$ is positive definite. By Proposition \ref{FProp1stKeyDtNMapDefined}, given below, it follows that the $2\times 2$ matrices $\BT_{ij}(z_0,z_1)$, $1\leq i,j\leq 2$, that make up the blocks for the transfer matrix $\BT(z_0,z_1)$ in the $2\times 2$ block form in (\ref{FTijMatrices}), are invertible. It follows from this that the matrix $\BGG(z_0,z_1)$ defined in (\ref{FGammaMatrix}) terms of these $2\times 2$ matrices is well-defined. And therefore it follows from the fact that $\BGG(z_0,z_1)$ is well-defined that the electromagnetic DtN map $\Lambda(z_0,z_1)=\Lambda(z_0,z_1,\BGk,\omega  \BGve_1(\omega),\omega \BGve_2(\omega),\omega \BGm_1(\omega),\omega \BGm_2(\omega))$, as given in Definition \ref{FDefDtNMap}, is well-defined. This completes the proof.
\end{proof}

The main result of this section on the analytic properties of the DtN map is the following:
\begin{Thm}
For any $\BGk\in \mathbb{R}^2$ and any $3\times 3$ matrix-valued Herglotz functions%
\index{Herglotz functions!matrix-valued}
$\omega \BGve_j(\omega)$, $\omega \BGm_j(\omega)$, $j=1,2$ with range in $M_3^+(\mathbb{C})$, the function
\begin{align}
\omega\mapsto \Lambda(z_0,z_1,\BGk,\omega \BGve_1(\omega),\omega \BGve_2(\omega),\omega \BGm_1(\omega),\omega \BGm_2(\omega))
\end{align}
is analytic from $\mathbb{C}^+$ into $M_6^+(\mathbb{C})$ and, in particular, it is a matrix-valued Herglotz function. More generally, it is a 
Herglotz function in the variable $\BZ=(\omega \BGve_1,\omega \BGve_2, \omega \BGm_1,\omega \BGm_2)\in (M_3^+(\mathbb{C}))^4$ 
(see Definition \ref{FDef.Herg1}).
\end{Thm}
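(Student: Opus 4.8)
The plan is to separate the statement into its two assertions---analyticity on the one hand, and positive-definiteness of the imaginary part (which upgrades analyticity to the Herglotz property) on the other---and to reduce each to facts already recorded for the transfer matrix.

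For analyticity I would argue that $\Lambda$ is obtained from $\BT$ by a finite chain of analyticity-preserving operations. Proposition \ref{FPropositionOnTransferMatrix} already gives that $\BT(z_0,z_1,\BGk,\BZ)$ is analytic in $\BZ=(\omega\BGve_1,\omega\BGve_2,\omega\BGm_1,\omega\BGm_2)\in(M_3^+(\bbC))^4$ into $M_4(\bbC)$; composing with the Herglotz (hence analytic) maps $\omega\mapsto\omega\BGve_j(\omega),\omega\BGm_j(\omega)$ then yields that $\omega\mapsto\BT(z_0,z_1)$ is analytic on $\bbC^+$. Extraction of the blocks $\BT_{ij}(z_0,z_1)$ is linear, hence analytic; by Theorem \ref{FThm:LayeredDtnMapWellDef} (via Proposition \ref{FProp1stKeyDtNMapDefined}) these blocks are invertible on the relevant domain, so Theorem \ref{FTh.analyinvop} makes their inverses analytic, and therefore the matrix $\BGG(z_0,z_1)$ assembled from them in \eqref{FGammaMatrix} is analytic. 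Since $\Lambda$ is, by Definition \ref{FDefDtNMap}, a fixed two-sided product of $\BGG$ with the constant matrices $\BP_{\mathfrak{t}}$ and $\Be_3\times$, it inherits analyticity. Joint analyticity in the vector variable $\BZ$ then follows by verifying separate analyticity in each of the four matrix slots and invoking Hartogs' Theorem (Theorem \ref{FThm.Hartog}).

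For the Herglotz property I would establish $\operatorname{Im}\Lambda>0$ from an energy balance, reducing it to the transfer-matrix inequality $\BJ-\BT(z_0,z_1)^*\BJ\,\BT(z_0,z_1)>0$ furnished by Theorem \ref{FThmPassivityImpliesDtNMapDefined}. The pivotal computation is that, for a solution $\BGy=[E_1,E_2,H_1,H_2]^T$ of \eqref{FMaxwellODEs}, the quadratic form $\BGy^*\BJ\BGy$ equals twice the normal (i.e.\ $z$-)component of the time-averaged Poynting vector, namely $\BGy^*\BJ\BGy=2\operatorname{Re}(E_1\overline{H_2}-E_2\overline{H_1})$; this is a one-line check from the explicit form \eqref{FJRhoMatrix} of $\BJ$. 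Writing the Dirichlet data $\BGx$ as the stack of $\BE\times\Bn$ at $z_1$ and $z_0$ and the Neumann data as $\Lambda\BGx$, an unwinding of Definition \ref{FDefDtNMap} (using $\BE\times\Bn=\BE_t\times\Bn$, $\Bn\times\BH\times\Bn=\BH_t$, and the matrix forms \eqref{FDefe3cross}) gives $\operatorname{Im}(\BGx^*\Lambda\BGx)=\tfrac12\big[\BGy(z_0)^*\BJ\BGy(z_0)-\BGy(z_1)^*\BJ\BGy(z_1)\big]=\tfrac12\,\BGy(z_0)^*\big(\BJ-\BT(z_0,z_1)^*\BJ\,\BT(z_0,z_1)\big)\BGy(z_0)$, where the last equality uses $\BGy(z_1)=\BT(z_0,z_1)\BGy(z_0)$. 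The cited inequality then makes this positive, so $\BGx^*\operatorname{Im}(\Lambda)\BGx>0$. Combined with the analyticity above, this shows $\Lambda$ is a matrix-valued Herglotz function of $\omega$, and, since the same identity and inequality hold throughout $(M_3^+(\bbC))^4$, a Herglotz function in $\BZ$ in the sense of Definition \ref{FDef.Herg1} (using Hartogs once more for the joint analyticity).

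I expect the main obstacle to be the bookkeeping in this last identification: one must faithfully match the $6\times6$ block assembly of $\Lambda$ (through $\BGG$, the inclusions $\BP_{\mathfrak{t}}$, and the factors $\pm\Be_3\times$) to the Poynting form $\BGy^*\BJ\BGy$, tracking both the opposite outward normals $\pm\Be_3$ at $z_1$ and $z_0$ and the factor $i$ built into the Neumann data. A related point requiring care is that $\Lambda$ annihilates the normal ($\Be_3$) directions, so strict positivity of $\operatorname{Im}\Lambda$ is genuinely a statement on the four-dimensional tangential subspace that carries the physical boundary data; I would make this restriction explicit rather than assert definiteness on all of $\bbC^6$.
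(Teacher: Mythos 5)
Your proposal follows essentially the same route as the paper's proof: the positivity of $\operatorname{Im}\Lambda$ is obtained from the identical energy-balance identity reducing to $\BJ-\BT(z_0,z_1)^*\BJ\BT(z_0,z_1)>0$ via Theorem \ref{FThmPassivityImpliesDtNMapDefined}, and analyticity is obtained by the same chain $\BT\to\BGG\to\Lambda$ followed by Hartogs' Theorem for joint analyticity in $\BZ$ (the paper cites Theorem \ref{Th.opanalytic} where you cite Theorem \ref{FTh.analyinvop} for the inverted blocks, an immaterial difference in finite dimensions). Your closing observation that $\Lambda$ annihilates the normal directions, so that strict definiteness of $\operatorname{Im}\Lambda$ holds only on the four-dimensional tangential subspace carrying the boundary data, is a legitimate refinement of the paper's statement that $\Lambda$ maps into $M_6^+(\mathbb{C})$.
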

\begin{proof}
Fix any $3\times 3$ matrix-valued Herglotz functions
$\omega \BGve_j(\omega)$, $\omega \BGm_j(\omega)$, $j=1,2$ with range in $M_3^+(\mathbb{C})$. Then for any electromagnetic field $\mathbf{E}$\textbf{, }$\mathbf{B}$\textbf{\ }with
tangential components $\BGy$ with $\operatorname{Im}\omega>0$ and tangential wavevector $\BGk\in\mathbb{R}^2$ we have, by Theorem \ref{FThmPassivityImpliesDtNMapDefined} and its proof, that
\begin{gather*}
\left(
\begin{bmatrix}
\mathbf{E}\times\mathbf{n}|_{z=z_{1}} \\
\mathbf{E}\times\mathbf{n}|_{z=z_{0}}
\end{bmatrix}
,\operatorname{Im}\left[\Lambda \left( z_{0},z_{1}\right)\right]
\begin{bmatrix}
\mathbf{E}\times\mathbf{n}|_{z=z_{1}} \\
\mathbf{E}\times\mathbf{n}|_{z=z_{0}}
\end{bmatrix}
\right)=\operatorname{Re}\left(
\begin{bmatrix}
\mathbf{E}\times\mathbf{n}|_{z=z_{1}} \\
\mathbf{E}\times\mathbf{n}|_{z=z_{0}}
\end{bmatrix}
,
\begin{bmatrix}
\mathbf{n}\times\mathbf{H}\times\mathbf{n}|_{z=z_{1}} \\
\mathbf{n}\times\mathbf{H}\times\mathbf{n}|_{z=z_{0}}
\end{bmatrix}
\right)\\
=\operatorname{Re}\left\{ \left( \mathbf{E}\times\mathbf{n}|_{z=z_{1}}, \mathbf{n}\times\mathbf{H}\times\mathbf{n}|_{z=z_{1}}\right) +\left( \mathbf{E}\times\mathbf{n}|_{z=z_{0}}, \mathbf{n}\times\mathbf{H}\times\mathbf{n}|_{z=z_{0}}\right) \right\}  \\
=-\frac{1}{2}\left( \BGy \left( z_{1}\right) ,\BJ\BGy \left( z_{1}\right)
\right) +\frac{1}{2}\left( \BGy \left( z_{0}\right) ,\BJ\BGy \left(
z_{0}\right) \right)
\\
=\frac{1}{c}\int\limits_{z_{0}}^{z_{1}}\left( \mathbf{H},\operatorname{Im}\left[\omega \BGm \left(
z,\omega \right)\right] \mathbf{H}\right) +\left( \mathbf{E},\operatorname{Im}\left[\omega \BGve \left( z,\omega \right)\right] \mathbf{E}\right) dz\geq 0,
\end{gather*}
with equality if and only if $\BGy\equiv 0$. It now follows from this and Theorem \ref{FThmPassivityImpliesDtNMapDefined}, which tells us that $\BJ-\BT(z_0,z_1)^*\BJ\BT(z_0,z_1)$ is positive definite, that we must have $\operatorname{Im}{\Lambda(z_0,z_1)}>0$.

We will now prove that the function $\omega\mapsto \Lambda(z_0,z_1,\BGk,\omega \BGve_1(\omega),\omega \BGve_2(\omega),\omega \BGm_1(\omega),\omega \BGm_2(\omega))$ is analytic from $\mathbb{C}^+$ into $M_6^+(\mathbb{C})$. By Proposition \ref{FPropositionOnTransferMatrix} we know that the map $$\omega\mapsto \BT(z_0,z_1,\BGk,\omega \BGve_1(\omega),\omega \BGve_2(\omega),\omega \BGm_1(\omega),\omega \BGm_2(\omega))$$ is an analytic function into $M_4(\mathbb{C})$. This implies by (\ref{FGammaMatrix}), (\ref{FGammaMatrixExplicit}) and Theorem \ref{Th.opanalytic} that $$\omega\mapsto \BGG(z_0,z_1,\BGk,\omega \BGve_1(\omega),\omega \BGve_2(\omega),\omega \BGm_1(\omega),\omega \BGm_2(\omega))$$ is an analytic function into $M_4(\mathbb{C})$ and so by (\ref{FGammaToLambda}) it follows that $$\omega\mapsto \Lambda(z_0,z_1,\BGk,\omega \BGve_1(\omega),\omega \BGve_2(\omega),\omega \BGm_1(\omega),\omega \BGm_2(\omega))$$ is an analytic function into $M_6^+(\mathbb{C})$.

Now we introduce the variable $\BZ=(\omega \BGve_1, \omega \BGve_2, \omega \BGm_1, \omega \BGm_2)\in (M_3^+(\mathbb{C}))^4$. Here $M_3^+(\mathbb{C})$ is an open, connected, and convex subset of $M_3(\mathbb{C})$ as a Banach space%
\index{Banach space}
in any normed topology (as all norms on a finite-dimensional vector space are equivalent) and hence so is $(M_3^+(\mathbb{C}))^4$ as a subset of $(M_3(\mathbb{C}))^4$. Our goal is to prove that the function $\BZ\mapsto \Lambda(z_0,z_1,\BGk,\BZ)$ is analytic. Now as $(M_3(\mathbb{C}))^4$ equipped with any norm is a Banach space and is isomorphic to the Banach space $\mathbb{C}^{36}$ (by mapping the components of the $4$-tuple and their matrix entries to a $36$-tuple) equipped with standard inner product on $\mathbb{C}^{36}$. Thus, by Theorem \ref{FThm.Hartog} (Hartogs' Theorem)%
\index{Hartogs' theorem}
it suffices to prove that for each component $Z_j$ of $\BZ$ as an element of $\mathbb{C}^{36}$, the function $Z_j\mapsto \Lambda(z_0,z_1,\BGk,\BZ)$ is analytic for all other components of $\BZ\in (M_3^+(\mathbb{C}))^4$ fixed. But this proof follows exactly as we did for proving $\omega\mapsto \Lambda(z_0,z_1,\BGk,\omega \BGve_1(\omega),\omega \BGve_2(\omega),\omega \BGm_1(\omega),\omega \BGm_2(\omega))$ is an analytic function into $M_6^+(\mathbb{C})$. Therefore, $\BZ\mapsto \Lambda(z_0,z_1,\BGk,\BZ)$ is analytic. This completes the proof.
\end{proof}
%%%%%%%%%%%%%%%%%%%%%%%%%%%%%%%%%%%%%%%%%%%%%%%%%%%%%%%%%%
\subsection{Auxiliary results}
\labsect{Fmain}
%%%%%%%%%%%%%%%%%%%%%%%%%%%%%%%%%%%%%%%%%%%%%%%%%%%%%%%%%%%%%5
In this section we will derive some auxiliary results that are used in the preceding subsection. First, we write the transfer matrix $\BT(z_0,z_1)$ in the $2\times 2$ block matrix form
\begin{align}\label{FTijMatrices}
\BT=\begin{bmatrix}
\BT_{11} & \BT_{12}\\
\BT_{21} & \BT_{22}
\end{bmatrix}
\end{align}
with respect to the decomposition $\mathbb{C}^4=\mathbb{C}^2\oplus\mathbb{C}^2$. We next define the $4\times 4$ matrix $\BGG(z_0,z_1)$ in the $2\times 2$ block matrix form by
\begin{gather}
\BGG(z_0,z_1) = \begin{bmatrix}
\BGG_{11}(z_0,z_1) & \BGG_{12}(z_0,z_1)\\
\BGG_{21}(z_0,z_1) & \BGG_{22}(z_0,z_1)
\end{bmatrix}\label{FGammaMatrix}\\
=
\begin{bmatrix}
\BT_{22}(z_0,z_1)\BT_{12}(z_0,z_1)^{-1} & \BT_{21}(z_0,z_1)-\BT_{22}(z_0,z_1)\BT_{12}(z_0,z_1)^{-1}\BT_{11}(z_0,z_1)\\
\BT_{12}(z_0,z_1)^{-1} & -\BT_{12}(z_0,z_1)^{-1}\BT_{11}(z_0,z_1)
\end{bmatrix}, \label{FGammaMatrixExplicit}
\end{gather}
provided $\BT_{12}(z_0,z_1)$ is invertible.

Let us now give an overview of the purpose of the results in this section. Using the next proposition, Proposition \ref{FProbDefinesGamma}, we 
are able to give an explicit formula for the DtN map $\Lambda(z_0,z_1)$ in terms of the transfer matrix $\BT(z_0,z_1)$ using the 
matrix $\BGG(z_0,z_1)$, the latter of which requires the invertibility of the matrix $\BT_{12}(z_0,z_1)$. The proposition which follows after this one, i.e., Proposition \ref{FProp1stKeyDtNMapDefined}, then tells us that the matrix $\BT_{12}(z_0,z_1)$ is invertible, provided the matrix $\BJ-\BT(z_0,z_1)^*\BJ\BT(z_0,z_1)$ is positive definite. And, finally, Theorem \ref{FThmPassivityImpliesDtNMapDefined} tells us that this matrix is positive definite (due to passivity).%
\index{passivity}

\begin{Pro}\label{FProbDefinesGamma}
If $\,\BT_{12}(z_0,z_1)$ is invertible then for any $\Bu_0,\Bu_1\in\mathbb{C}^2$ there exist unique $\Bv_0,\Bv_1\in \mathbb{C}^2$ satisfying
\begin{align}\label{DefTransferMatrixBlockDecomp}
\BT(z_0,z_1)\begin{bmatrix}
\Bu_0 \\
\Bv_0
\end{bmatrix}
=
\begin{bmatrix}
\Bu_1 \\
\Bv_1
\end{bmatrix}.
\end{align}
These unique vectors $\Bv_0,\Bv_1$ are given explicitly in terms of the vectors $\Bu_0,\Bu_1$ by the formula
\begin{align}
\begin{bmatrix}
\Bv_1 \\
\Bv_0
\end{bmatrix}
=\BGG(z_0,z_1)\begin{bmatrix}
\Bu_1 \\
\Bu_0
\end{bmatrix}.
\end{align}
\end{Pro}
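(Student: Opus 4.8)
The plan is to read (\ref{DefTransferMatrixBlockDecomp}) purely as a block-partitioned linear system for the unknown pair $(\Bv_0,\Bv_1)$ and to solve it by block elimination; the only structural input needed is the hypothesis that $\BT_{12}(z_0,z_1)$ is invertible. Suppressing the arguments $(z_0,z_1)$ and expanding the matrix product in (\ref{DefTransferMatrixBlockDecomp}) using the block form (\ref{FTijMatrices}) relative to the decomposition $\mathbb{C}^4=\mathbb{C}^2\oplus\mathbb{C}^2$, I would first record the two coupled vector equations
\begin{align*}
\BT_{11}\Bu_0+\BT_{12}\Bv_0&=\Bu_1,\\
\BT_{21}\Bu_0+\BT_{22}\Bv_0&=\Bv_1,
\end{align*}
in which $\Bu_0,\Bu_1\in\mathbb{C}^2$ are prescribed while $\Bv_0,\Bv_1\in\mathbb{C}^2$ are to be determined.

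Next I would solve the first equation for $\Bv_0$: since $\BT_{12}$ is invertible by hypothesis, it has the unique solution $\Bv_0=\BT_{12}^{-1}\Bu_1-\BT_{12}^{-1}\BT_{11}\Bu_0$, which settles both the existence and the uniqueness of $\Bv_0$ for each given $(\Bu_0,\Bu_1)$. Substituting this back into the second equation determines $\Bv_1$ uniquely as $\Bv_1=\BT_{22}\BT_{12}^{-1}\Bu_1+(\BT_{21}-\BT_{22}\BT_{12}^{-1}\BT_{11})\Bu_0$. Comparing the coefficient blocks appearing here with the explicit entries of $\BGG(z_0,z_1)$ in (\ref{FGammaMatrixExplicit}), I would read off that $\Bv_1=\BGG_{11}\Bu_1+\BGG_{12}\Bu_0$ and $\Bv_0=\BGG_{21}\Bu_1+\BGG_{22}\Bu_0$, where $\BGG_{11}=\BT_{22}\BT_{12}^{-1}$, $\BGG_{12}=\BT_{21}-\BT_{22}\BT_{12}^{-1}\BT_{11}$, $\BGG_{21}=\BT_{12}^{-1}$, and $\BGG_{22}=-\BT_{12}^{-1}\BT_{11}$ as in (\ref{FGammaMatrix}). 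This is precisely the stated identity $\begin{bmatrix}\Bv_1\\\Bv_0\end{bmatrix}=\BGG(z_0,z_1)\begin{bmatrix}\Bu_1\\\Bu_0\end{bmatrix}$.

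I do not anticipate any genuine obstacle: the statement is a formula verification by block Gaussian elimination, so the real work is careful bookkeeping rather than any substantive argument. The one point demanding attention is the reversed pairing of the variables, namely that $\BGG$ acts on $(\Bu_1,\Bu_0)$ and outputs $(\Bv_1,\Bv_0)$, whereas the native block decomposition of $\BT$ in (\ref{DefTransferMatrixBlockDecomp}) pairs $(\Bu_0,\Bv_0)$ on the input side and $(\Bu_1,\Bv_1)$ on the output side; I would therefore permute the indices consistently when matching coefficients. It is also worth flagging that invertibility of $\BT_{12}$ is used exactly once, to invert the $(1,2)$ block, and that this hypothesis is in fact both necessary and sufficient for the solution map $(\Bu_0,\Bu_1)\mapsto(\Bv_0,\Bv_1)$ to be well-defined and single-valued.
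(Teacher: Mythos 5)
Your proposal is correct and follows essentially the same route as the paper's proof: both expand $\BT$ in its $2\times 2$ block form, use the invertibility of $\BT_{12}$ to solve for $\Bv_0$ and then $\Bv_1$ in terms of $\Bu_0,\Bu_1$, and identify the resulting coefficient blocks with the entries of $\BGG(z_0,z_1)$. The paper merely phrases the elimination as a chain of equivalent matrix equations rather than direct substitution, which is a cosmetic difference.
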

\begin{proof}
Assume $\BT_{12}(z_0,z_1)$ is invertible. Let $\Bu_0,\Bu_1\in\mathbb{C}^2$. Then we have
\begin{align*}
\begin{bmatrix}
\Bu_1 \\
\Bv_1
\end{bmatrix}
=\BT(z_0,z_1)\begin{bmatrix}
\Bu_0 \\
\Bv_0
\end{bmatrix}
=\begin{bmatrix}
\BT_{11}(z_0,z_1)\Bu_0+\BT_{12}(z_0,z_1)\Bv_0  \\
\BT_{21}(z_0,z_1)\Bu_0+\BT_{22}(z_0,z_1)\Bv_0
\end{bmatrix}
\end{align*}
if and only if
\begin{align*}
\begin{bmatrix}
0 & \BI \\
\BI & 0
\end{bmatrix}
\begin{bmatrix}
\BI & -\BT_{22}(z_0,z_1) \\
0 & \BT_{12}(z_0,z_1)
\end{bmatrix}
\begin{bmatrix}
\Bv_1\\
\Bv_0
\end{bmatrix}
=
\begin{bmatrix}
\BI & -\BT_{11}(z_0,z_1) \\
0 & \BT_{21}(z_0,z_1)
\end{bmatrix}
\begin{bmatrix}
\Bu_1 \\
\Bu_0
\end{bmatrix},
\end{align*}
and this holds if and only if
\begin{gather*}
\begin{bmatrix}
\Bv_1 \\
\Bv_0
\end{bmatrix}
=
\begin{bmatrix}
\BI & \BT_{22}(z_0,z_1)\BT_{12}(z_0,z_1)^{-1} \\
0 & \BT_{12}(z_0,z_1)^{-1}
\end{bmatrix}
\begin{bmatrix}
0 & \BI \\
\BI & 0
\end{bmatrix}
\begin{bmatrix}
\BI & -\BT_{11}(z_0,z_1) \\
0 & \BT_{21}(z_0,z_1)
\end{bmatrix}
\begin{bmatrix}
\Bu_1 \\
\Bu_0
\end{bmatrix}\\
=
\begin{bmatrix}
\BT_{22}(z_0,z_1)\BT_{12}(z_0,z_1)^{-1} & \BT_{21}(z_0,z_1)-\BT_{22}(z_0,z_1)\BT_{12}(z_0,z_1)^{-1}\BT_{11}(z_0,z_1)\\
\BT_{12}(z_0,z_1)^{-1} & -\BT_{12}(z_0,z_1)^{-1}\BT_{11}(z_0,z_1)
\end{bmatrix}
\begin{bmatrix}
\Bu_1 \\
\Bu_0
\end{bmatrix}.
\end{gather*}
The proof of this proposition follows immediately from these equivalent statements.
\end{proof}

\begin{Pro}\label{FProp1stKeyDtNMapDefined}
The matrix $\BJ-\BT^*\BJ\BT$ [dropping dependency on $(z_0,z_1)$ for simplicity] has the block form
\begin{equation}\label{FJminusTastJTMatrix}
\BJ-\BT^{\ast }\BJ\BT=
\begin{bmatrix}
2\operatorname{Re}\left( \BT_{11}^{\ast }\BGr \BT_{21}\right)  & \BGr -\left(
\BT_{21}^{\ast }\BGr ^{\ast }\BT_{12}+\BT_{11}^{\ast }\BGr \BT_{22}\right)  \\
\left[ \BGr -\left(\BT_{21}^{\ast }\BGr ^{\ast }\BT_{12}+\BT_{11}^{\ast }\BGr
\BT_{22}\right)\right] ^{\ast } & 2\operatorname{Re}\left( \BT_{12}^{\ast }\BGr \BT_{22}\right)
\end{bmatrix},
\end{equation}
where $\operatorname{Re}(\BM)=\frac{1}{2}(\BM+\BM^*)$ denotes the real part of a square matrix $\BM$. In particular, if $\BJ-\BT^{\ast }\BJ\BT>0$ then $\operatorname{Re}\left( \BT_{11}^{\ast }\BGr \BT_{21}\right)>0$, $\operatorname{Re}\left( \BT_{12}^{\ast }\BGr \BT_{22}\right)>0$, and $\BT_{ij}$ is invertible for $1\leq i,j\leq 2$.
\end{Pro}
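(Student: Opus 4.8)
The statement has two independent parts, and the plan is to treat them in sequence. First I would establish the block identity \eqref{FJminusTastJTMatrix} by a direct block multiplication, and then I would read off the positivity and invertibility consequences from it using only elementary facts about positive-definite Hermitian matrices.

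For the block identity, I would compute $\BT^{*}\BJ\BT$ using the decomposition \eqref{FTijMatrices} and the form of $\BJ$ in \eqref{FJRhoMatrix}. Multiplying out gives
\[
\BJ\BT=\begin{bmatrix}\BGr\BT_{21} & \BGr\BT_{22}\\ \BGr^{*}\BT_{11} & \BGr^{*}\BT_{12}\end{bmatrix},
\]
so that the $(1,1)$ block of $\BT^{*}\BJ\BT$ is $\BT_{11}^{*}\BGr\BT_{21}+\BT_{21}^{*}\BGr^{*}\BT_{11}$ and the $(2,2)$ block is $\BT_{12}^{*}\BGr\BT_{22}+\BT_{22}^{*}\BGr^{*}\BT_{12}$. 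The key observation is that each of these is twice the real part of a single product, by the adjoint identity $(\BT_{11}^{*}\BGr\BT_{21})^{*}=\BT_{21}^{*}\BGr^{*}\BT_{11}$ and its analogue for the other block, while the two off-diagonal blocks are $\BT_{11}^{*}\BGr\BT_{22}+\BT_{21}^{*}\BGr^{*}\BT_{12}$ and its adjoint. Subtracting from $\BJ$ and keeping track of the signs then yields exactly \eqref{FJminusTastJTMatrix}; the self-adjointness of $\BJ-\BT^{*}\BJ\BT$ (immediate from $\BJ^{*}=\BJ$) serves as a consistency check that the two off-diagonal blocks are genuinely adjoint to one another.

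For the consequences, suppose $\BJ-\BT^{*}\BJ\BT>0$. Restricting the associated quadratic form to the two coordinate subspaces $\mathbb{C}^{2}\oplus 0$ and $0\oplus\mathbb{C}^{2}$ shows that each diagonal block is itself positive definite, which by \eqref{FJminusTastJTMatrix} yields the asserted sign conditions on $\operatorname{Re}(\BT_{11}^{*}\BGr\BT_{21})$ and $\operatorname{Re}(\BT_{12}^{*}\BGr\BT_{22})$. To upgrade these to invertibility of the individual blocks I would use the elementary fact that a matrix $\BM$ with definite real part is invertible: if $\BM v=0$ then $v^{*}\operatorname{Re}(\BM)v=\operatorname{Re}(v^{*}\BM v)=0$, forcing $v=0$. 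Applying this to $\BM=\BT_{11}^{*}\BGr\BT_{21}$ and $\BM=\BT_{12}^{*}\BGr\BT_{22}$ shows both $2\times 2$ products are invertible; since these are products of square matrices and $\BGr$ is invertible (indeed $\BGr^{2}=-\BI$, whence $\BGr^{-1}=-\BGr=\BGr^{*}$), every factor $\BT_{11},\BT_{21},\BT_{12},\BT_{22}$ is invertible, which is the last claim.

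I expect no serious obstacle here: the computation is routine block algebra and the deductions are standard. The only steps requiring care are the adjoint bookkeeping that recognizes each diagonal block as twice a real part (including correct sign tracking upon subtraction from $\BJ$), and the small lemma that a definite real part forces invertibility, which is precisely the hinge that converts positivity of the diagonal blocks into invertibility of all four $\BT_{ij}$.
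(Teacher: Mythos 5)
Your proof is correct and follows essentially the same route as the paper's: direct block multiplication to obtain the form of $\BJ-\BT^{*}\BJ\BT$, positivity of the two diagonal blocks by restricting the quadratic form to the coordinate subspaces $\mathbb{C}^{2}\oplus 0$ and $0\oplus\mathbb{C}^{2}$, and the lemma that a square matrix with definite real part is invertible, applied to the products $\BT_{11}^{*}\BGr\BT_{21}$ and $\BT_{12}^{*}\BGr\BT_{22}$ (with the final factorization step using invertibility of $\BGr$, which the paper leaves implicit). The only caveat is that a careful sign-track of your own computation gives $-2\operatorname{Re}\left(\BT_{11}^{*}\BGr\BT_{21}\right)=2\operatorname{Re}\left(\BT_{21}^{*}\BGr\BT_{11}\right)$ for the $(1,1)$ block rather than ``exactly'' the diagonal entries as printed in \eqref{FJminusTastJTMatrix}, but since definiteness of either sign yields the same invertibility conclusions, this does not affect the argument.
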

\begin{proof}
The block representation (\ref{FJminusTastJTMatrix}) follows immediately from the block representations (\ref{FJRhoMatrix}), (\ref{FTijMatrices}) by block multiplication. Suppose $\BJ-\BT^{\ast }\BJ\BT>0$. Then it follows immediately from the block representation (\ref{FJminusTastJTMatrix}) that $\operatorname{Re}\left( \BT_{11}^{\ast }\BGr \BT_{21}\right)>0$, $\operatorname{Re}\left( \BT_{12}^{\ast }\BGr \BT_{22}\right)>0$. Now it is a well-known fact from linear algebra that if $\operatorname{Re}\BM>0$ for a square matrix $\BM$ then $\BM$ is invertible. From this it immediately follows that $\BT_{ij}$ is invertible for $1\leq i,j\leq 2$. This completes the proof.
\end{proof}

Now we define the indefinite inner product $\left[\cdot,\cdot\right]:\mathbb{C}^4\times\mathbb{C}^4\rightarrow \mathbb{C}$ in terms of the standard inner product $\left(\cdot,\cdot\right):\mathbb{C}^4\times\mathbb{C}^4\rightarrow \mathbb{C}$ by
\begin{align}
\left[\BGy_1,\BGy_2 \right] = \frac{c}{16\pi}\left(\BJ\BGy_1,\BGy_2\right),\;\;\BGy_1,\BGy_2\in\mathbb{C}^4.
\end{align}
We also define the complex Poynting vector%
\index{Poynting vector}
$\mathbf{S}$ for functions of the form (\ref{FTangentialBlochWaves}) to be
\begin{gather*}
\mathbf{S}=\frac{c}{8\pi}\mathbf{E\times}\overline{\mathbf{H}}=e^{-2(\operatorname{Im}(k_1)x+\operatorname{Im}(k_2)y)}\mathbf{S}\left( z\right),\;\;
\mathbf{S}\left(z\right)=\frac{c}{%
8\pi }\mathbf{E}\left( z\right) \mathbf{\times }\overline{\mathbf{H}\left(
z\right)}
\end{gather*}
The energy conservation law for Maxwell's equations (\ref{FMaxwellEqsTimeHar}) for functions of the form (\ref{FTangentialBlochWaves}) is now described by the next theorem.
\begin{Thm}\label{FThmPassivityImpliesDtNMapDefined}
Assume $\operatorname{Im}\omega>0$ and $\BGk\in \mathbb{R}^2$. Then for any $z_0, z_1\in [-d,d]$, $z_0<z_1$ and any solution $\BGy$ of the ODE (\ref{FMaxwellODEs}) with $\begin{bmatrix} \bf{E} & \bf{H} \end{bmatrix}^{T}$ the corresponding solution of Maxwell's equations (\ref{FMaxwellEqsTimeHar}) of the form (\ref{FTangentialBlochWaves}) whose tangential components (\ref{FTangentialComponents}) are $\BGy$, we have
\begin{gather}
[\BGy(z_0),\BGy(z_0)]-[\BGy(z_1),\BGy(z_1)]=-\int\limits_{z_{0}}^{z_{1}}\partial _{z}\left[ \operatorname{Re}\mathbf{S}\left(z\right) \cdot e_{3}\right] dz
=
-\int\limits_{z_{0}}^{z_{1}}\nabla \cdot
\operatorname{Re}\left( \mathbf{S}\right) dz \label{FFluxFormToPoynting1}\\
=
\frac{1}{8\pi }\int\limits_{z_{0}}^{z_{1}}\left( \mathbf{H,}\operatorname{Im}\left[ \omega \BGm \left( z,\omega \right) \right] \mathbf{H}\right)
+\left( \mathbf{E,}\operatorname{Im}\left[ \omega \BGve \left( z,\omega
\right) \right] \mathbf{E}\right)dz\geq 0, \label{FFluxFormToPoynting2}
\end{gather}
with equality if and only if $\BGy\equiv 0$. In particular, this implies
\begin{eqnarray}
\BJ-\BT(z_0,z_1,\BGk, \omega)^*\BJ\BT(z_0,z_1, \BGk, \omega)>0.\label{FFluxFormToPoynting3}
\end{eqnarray}
\end{Thm}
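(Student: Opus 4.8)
The plan is to prove the chain (\ref{FFluxFormToPoynting1})--(\ref{FFluxFormToPoynting2}) by a complex Poynting's theorem and then extract (\ref{FFluxFormToPoynting3}) by letting $\BGy(z_0)$ range over all of $\mathbb{C}^4$ in $\BGy(z_1)=\BT(z_0,z_1)\BGy(z_0)$. I would begin with the pointwise identity $[\BGy(z),\BGy(z)]=\operatorname{Re}(\mathbf{S}(z)\cdot\mathbf{e}_3)$: writing $\BGy=[E_1\ E_2\ H_1\ H_2]^{T}$ and expanding with the block form (\ref{FJRhoMatrix}) gives $(\BJ\BGy,\BGy)=2\operatorname{Re}(E_1\overline{H_2}-E_2\overline{H_1})$, while $(\mathbf{E}\times\overline{\mathbf{H}})\cdot\mathbf{e}_3=E_1\overline{H_2}-E_2\overline{H_1}$, so the normalizations $[\cdot,\cdot]=\tfrac{c}{16\pi}(\BJ\cdot,\cdot)$ and $\mathbf{S}=\tfrac{c}{8\pi}\mathbf{E}\times\overline{\mathbf{H}}$ match. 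Since $\BGk\in\mathbb{R}^2$, the factor $e^{i(k_1x+k_2y)}$ cancels against its conjugate in $\mathbf{E}\times\overline{\mathbf{H}}$, so $\mathbf{S}$ is a function of $z$ alone and $\nabla\cdot\operatorname{Re}(\mathbf{S})=\partial_z(\operatorname{Re}(\mathbf{S})\cdot\mathbf{e}_3)$; the two equalities in (\ref{FFluxFormToPoynting1}) are then just this remark together with the fundamental theorem of calculus applied to $z\mapsto[\BGy(z),\BGy(z)]$.

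The heart of the argument is the energy identity (\ref{FFluxFormToPoynting2}). I would apply the vector identity $\nabla\cdot(\mathbf{E}\times\overline{\mathbf{H}})=\overline{\mathbf{H}}\cdot(\nabla\times\mathbf{E})-\mathbf{E}\cdot(\nabla\times\overline{\mathbf{H}})$ and substitute the full Maxwell's equations (\ref{FMaxwellEqsTimeHar}) --- legitimate because $\BGy$ is by hypothesis the tangential trace of a genuine solution $[\mathbf{E}\ \mathbf{H}]^{T}$ --- giving $\nabla\times\mathbf{E}=\tfrac{i\omega}{c}\BGm\mathbf{H}$ and, since $\nabla$ has real coefficients and $\BGk$ is real, $\nabla\times\overline{\mathbf{H}}=\overline{\nabla\times\mathbf{H}}=\tfrac{i\overline{\omega}}{c}\,\overline{\BGve\mathbf{E}}$. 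Thus $8\pi\,\nabla\cdot\mathbf{S}=i\omega\,\mathbf{H}^{*}\BGm\mathbf{H}-i\overline{\omega}\,\overline{\mathbf{E}^{*}\BGve\mathbf{E}}$ for the full three-component fields. Taking real parts and invoking $\operatorname{Im}\BM=(\BM-\BM^{*})/(2i)$ collapses each term, e.g.\ $\operatorname{Re}(i\omega\,\mathbf{H}^{*}\BGm\mathbf{H})=-\mathbf{H}^{*}\operatorname{Im}(\omega\BGm)\mathbf{H}$, and similarly for the $\mathbf{E}$ contribution, so that $-\nabla\cdot\operatorname{Re}(\mathbf{S})=\tfrac{1}{8\pi}[(\mathbf{H},\operatorname{Im}[\omega\BGm]\mathbf{H})+(\mathbf{E},\operatorname{Im}[\omega\BGve]\mathbf{E})]$, where I use that $\operatorname{Im}[\omega\BGm]$ and $\operatorname{Im}[\omega\BGve]$ are Hermitian so that the matrix quadratic form coincides with the inner product $(\cdot,\cdot)$. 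Integrating over $[z_0,z_1]$ then yields (\ref{FFluxFormToPoynting2}).

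The passivity hypothesis (\ref{FPassiveLayeredMedia}) makes the integrand nonnegative, so the integral is $\geq 0$; because $\operatorname{Im}[\omega\BGve]$ and $\operatorname{Im}[\omega\BGm]$ are strictly positive definite, it vanishes iff $\mathbf{E}(z)=\mathbf{H}(z)=0$ throughout $[z_0,z_1]$, which by (\ref{FNormalComponents}) (the normal components are slaved to $\BGy$) is equivalent to $\BGy\equiv 0$, and then to $\BGy\equiv 0$ everywhere by uniqueness for the ODE. For (\ref{FFluxFormToPoynting3}) I would set $\BGy_0:=\BGy(z_0)$, an arbitrary vector of $\mathbb{C}^4$, and use $\BGy(z_1)=\BT(z_0,z_1)\BGy_0$ with $\BJ^{*}=\BJ$ and the adjoint relation $(\BA\mathbf{u},\mathbf{v})=(\mathbf{u},\BA^{*}\mathbf{v})$ to obtain
\[
[\BGy(z_0),\BGy(z_0)]-[\BGy(z_1),\BGy(z_1)]=\tfrac{c}{16\pi}\big((\BJ-\BT^{*}\BJ\BT)\BGy_0,\BGy_0\big),
\]
and conclude from the nonnegativity, with equality only at $\BGy_0=0$, that $\BJ-\BT^{*}\BJ\BT>0$.

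I expect the difficulty to be bookkeeping rather than conceptual. The two bilinear forms must be kept apart --- the conjugation-free dot product $\cdot$ versus the Hermitian inner product $(\cdot,\cdot)$ --- and one must conjugate the $\omega$- and $z$-dependent tensors correctly inside $\nabla\times\overline{\mathbf{H}}$. The essential use of $\BGk\in\mathbb{R}^2$ is the cancellation of the phase factors; for complex $\BGk$ one instead picks up the weight $e^{-2(\operatorname{Im}(k_1)x+\operatorname{Im}(k_2)y)}$ appearing in the definition of $\mathbf{S}$ and loses the clean reduction to a $z$-derivative. The sign tracking in $\operatorname{Re}(i\omega\,\mathbf{H}^{*}\BGm\mathbf{H})=-\mathbf{H}^{*}\operatorname{Im}(\omega\BGm)\mathbf{H}$ and the identification $[\BGy,\BGy]=\operatorname{Re}(\mathbf{S}\cdot\mathbf{e}_3)$ are the two places where an error would propagate, so those computations merit the most care.
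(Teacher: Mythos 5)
Your proposal is correct and follows essentially the same route as the paper: reduce the left-hand side to $\tfrac{c}{16\pi}(\BGy,\BJ\BGy)$ via $\operatorname{Re}\mathbf{S}(z)\cdot\mathbf{e}_3$, invoke the complex Poynting theorem for the energy identity, use strict passivity for the sign and the equality case, and then let $\BGy(z_0)$ range over $\bbC^4$ with $\BGy(z_1)=\BT(z_0,z_1)\BGy(z_0)$ to read off $\BJ-\BT^*\BJ\BT>0$. The only difference is that you carry out the Poynting computation explicitly (correctly, including the conjugation of $\omega\BGve$ inside $\nabla\times\overline{\mathbf{H}}$ and the role of $\BGk\in\bbR^2$), whereas the paper omits it with a citation to Jackson.
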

\begin{proof}
The equalities in (\ref{FFluxFormToPoynting1}) follow immediately from the equalities
\begin{align*}
\operatorname{Re}\mathbf{S}\left( z\right) \cdot e_{3} =-\frac{1%
}{2}\left(
\begin{bmatrix}
\mathbf{E}\left( z\right)  \\
\mathbf{H}\left( z\right)
\end{bmatrix}%
,%
\begin{bmatrix}
0 & \mathbf{e}_{3}\times  \\
-\mathbf{e}_{3}\times  & 0%
\end{bmatrix}%
\begin{bmatrix}
\mathbf{E}\left( z\right)  \\
\mathbf{H}\left( z\right)
\end{bmatrix}%
\right)
=\frac{1}{2}\left( \BGy \left( z\right) ,\BJ\BGy \left( z\right) \right).
\end{align*}
The proof of the last term in
(\ref{FFluxFormToPoynting1}) being equal to (\ref{FFluxFormToPoynting2}) is proved in almost the exact same way as the proof of Poynting's Theorem for time-harmonic fields [see Section 6.8
in \citeAPY{Jackson:1999:CE} and also Section V.A of \citeAPY{Welters:2014:SLL}] and so will be omitted. The inequality in (\ref{FFluxFormToPoynting2}) follows from passivity (\ref{FPassiveLayeredMedia}) and necessary and sufficient conditions for equality follow immediately from this. These facts imply immediately the inequality in (\ref{FFluxFormToPoynting3}). This completes the proof.
\end{proof}

%%%%%%%%%%%%%%%%%%%%%%%%%%%%%%%%%%%%%%%%%%%%%%%%%%%%%%%%%%%
\section{Analyticity of the DtN map for bounded media}\label{Sec.boundedmedia}
 \setcounter{equation}{0}
\subsection{Formulation of the problem}
%%%%%%%%%%%%%%%%%%%%%%%%%%%%%%%%%%%%%%%%%%%%%%%%%%%%%%%%%%%%%55
For the sake of simplicity, we consider here an electromagnetic medium (see \ref{fig.mediumbw} for an example) composed of two isotropic homogeneous materials which fills an open connected bounded Lipschitz domain $\Omega \subset \bbR^3$ (we refer to the Section 5.1 of \citeAY{Kirsch:2015:MTT} for the definition of Lipschitz bounded domains%
\index{Lipschitz bounded domains}
which includes domains with nonsmooth boundary as polyhedra).
However, our result could be easily extended to a medium composed of a finite number of anisotropic homogeneous materials, this is discussed in the last section. Thus, the dielectric permittivity $\ep$ and the magnetic permeability $\mu$ which characterized this medium are supposed to be piecewise constant functions which take respectively the complex values $\ep_1$ and $\mu_1$ in the first material, and $\ep_2$ and $\mu_2$ in the second one.
Moreover, we assume that both materials are passive, thus these functions have to satisfy (see \citeAY{Milton:2002:TOC}; \citeAY{Welters:2014:SLL}; \citeAY{Bernland:2011:SRC};
\citeAY{Gustafsson:2010:SRP}):
\begin{equation}\label{Feq.pass}
\operatorname{Im}(\omega \ep)>0 \, \mbox{ and } \, \operatorname{Im}(\omega \mu)>0 \mbox{ for } \, \operatorname{Im}\omega>0,
\end{equation}
where $\omega$ denotes the complex frequency. \noindent

The time-harmonic Maxwell's equations%
\index{Maxwell's equations}
(in Gaussian units) which link the electric and magnetic fields $\BE$ and $\BH$ in $\Omega$ are given by:
\begin{equation*}\label{FTM}
(\mathcal{P})\left\{ \begin{array}{ll}
\Curlop \BE-i  \omega \mu \, c^{-1} \BH= 0& \mbox{in }  \Omega, \\[10pt]
\Curlop \BH +i  \omega \ep \, c^{-1}  \BE= 0& \mbox{in }  \Omega, \\[10pt]
\BE \times \Bn=\Bf &  \mbox{on }  \partial \Omega.
\end{array} \right.
\end{equation*}
%%%%%%%%%%%%%%%%%%%%%%%%%%%%%%%%%%%%%%%%%%%%%%%%%%%%%%%%%%%%%%%%%%%%%%%%%
where $\Bn$ denotes here the outward normal vector on the boundary of $\Omega$: $\partial\Omega$, $c$ the speed of light in the vacuum and $\Bf$ the tangential electric field $\BE$ on $\partial \Omega$.\\

\begin{figure}[t]
\centering
\includegraphics[width=0.69\textwidth]{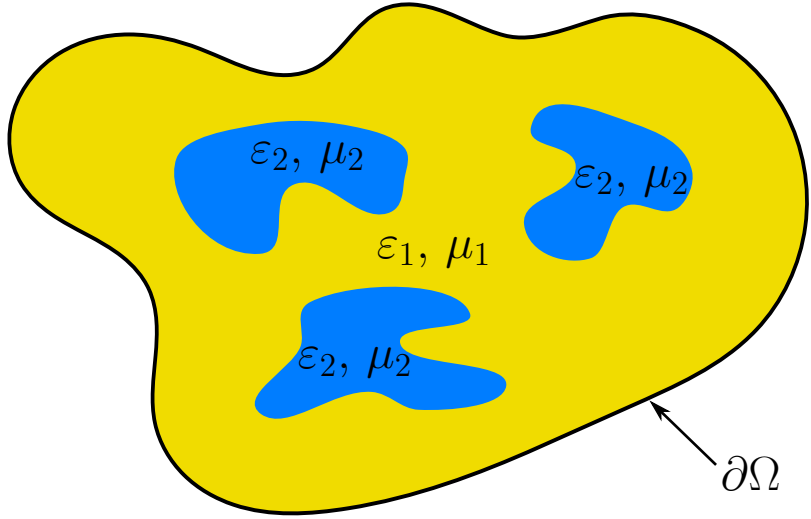}
\caption{Example of the body $\Omega$.}
\label{fig.mediumbw}
\end{figure}

\noindent Let us first introduce some classical functional spaces associated to the study of Maxwell's equations:
\begin{itemize}
\item $L^2(\Omega)$ which is a Hilbert space endowed with the inner product:
$$
(\Bf,\Bg)_{L^2(\Omega)}=\int_{\Omega} \Bf(x)\cdot  \overline{\Bg(x)} \, \dint \Bx,
$$
\item $H(\Curlop,\Omega)=\left\{ \Bu\in L^2(\Omega) \,\mid \,\Curlop \Bu \in L^2(\Omega)\right\}$,
\item $H_{0}(\Curlop,\Omega)=\left\{ \Bu\in H(\Curlop,\Omega)\, \mid \, \Bu \times \Bn=0 \mbox{ on } \partial \Omega \right\}$,
\item $H^{-\frac{1}{2}}(\Divop, \partial\Omega)=\left\{ (\Bu \times \Bn)_{\partial \Omega} \,  \mid \, \Bu \in H(\Curlop,\Omega) \right\}$,
\item $H^{-\frac{1}{2}}(\Curlop, \partial\Omega)=\left\{ \Bn\times (\Bu\times \Bn)_{\partial \Omega} \,  \mid \,\Bu \in H(\Curlop,\Omega)\right\}. $
\end{itemize}
Here $H(\Curlop,\Omega)$ and $H_{0}(\Curlop,\Omega)$ are Hilbert spaces endowed with the norm $\left\|\cdot\right\|_{H(\Curlop,\Omega)}$ defined by
\begin{equation*}
\left\|\Bu\right\|_{H(\Curlop,\Omega)}^2=\left\|\Bu\right\|_{L^2(\Omega)}^2+\left\|\Curlop \Bu\right\|_{L^2(\Omega)}^2.
\end{equation*}
Concerning the functional framework associated with the spaces of tangential traces and tangential trace components $H^{-\frac{1}{2}}(\Divop,\partial\Omega)$ and $H^{-\frac{1}{2}}(\Curlop, \partial\Omega)$, we refer to the Section 5.1 of \citeAPY{Kirsch:2015:MTT}. These spaces are respectively Banach spaces for the norms $\| \cdot \|_{H^{-\frac{1}{2}}(\Curlop,\partial\Omega)}$ and $\| \cdot \|_{H^{-\frac{1}{2}}(\Divop,\partial\Omega)}$ introduced in the Definition 5.23 of \citeAPY{Kirsch:2015:MTT} and are linked by the duality relation: $(H^{-\frac{1}{2}}(\Divop,\partial\Omega))^*=H^{-\frac{1}{2}}(\Curlop,\partial\Omega)$. Moreover, their duality product $\left\langle \cdot , \cdot \right\rangle$ (see Theorem 5.26 of \citeAPY{Kirsch:2015:MTT}) satisfies the following Green's identity:
\begin{equation}\label{Feq.duality}
\int_{\Omega} \Bu\cdot  \Curlop \Bv - \Bv \cdot  \Curlop \Bu \, \dint \Bx=\left\langle\Bn\times(\Bv\times \Bn)  , \Bu\times \Bn \right\rangle, \ \forall \Bu, \Bv \in H(\Curlop,\Omega).
\end{equation}

Here we look for solutions $(\BE, \BH)\in H(\Curlop,\Omega)^2$ of the problem $(\mathcal{P})$ for data $\Bf \in H^{-\frac{1}{2}}(\Divop,\partial\Omega)$.
%%%%%%%%%%%%%%%%%%%%%%%%%%%%%%%%%%%%%%%%%%%%%%%%%%%%%%%%%%%
\subsection{The Dirichlet-to-Neumann map}
%%%%%%%%%%%%%%%%%%%%%%%%%%%%%%%%%%%%%%%%%%%%%%%%%%%%%%%%%%%%%
We introduce the variable $\BZ=(\omega \ep_1, \omega \ep_2, \omega \mu_1, \omega \mu_2)\in (\bbC^{+})^{4}$.
The electromagnetic Dirichlet-to-Neumann map%
\index{Dirichlet-to-Neumann map!electromagnetism}
$\Lambda_{\BZ}:H^{-\frac{1}{2}}(\Divop,\partial\Omega) \to  H^{-\frac{1}{2}}(\Curlop,\partial\Omega)$ associated to the problem $(\mathcal{P})$ is defined as the linear operator:
\begin{equation}\label{Feq.DtN}
\Lambda_{\BZ}\, \Bf=i\,  \Bn \times (\BH \times \Bn)_{\partial \Omega},  \quad \forall  \Bf \in H^{-\frac{1}{2}}(\Divop,\partial\Omega).
\end{equation}
\begin{Rem}
This definition of the DtN map (\ref{Feq.DtN}) is slightly different from the one introduced in \citeAPY{Albanese:2006:ISP}, \citeAPY{Ola:1993:IBV} and \citeAPY{Uhlmann:2014:REO}.
Here, the rotated tangential electric field $\Bf= \BE \times \Bn$ is mapped (up to a constant) to the tangential component of the magnetic field $\Bn \times (\BH \times \Bn)=(\BI-\Bn\Bn^{T})\BH$
and not to the rotated tangential magnetic field $\BH \times \Bn$. This definition is closer to the one used in \citeAPY{Chaulet:2014:ESP} to construct generalized impedance boundary conditions%
\index{electromagnetism!generalized impedance boundary conditions}
for electromagnetic scattering problems.
\end{Rem}
We want to prove the following theorem:
\begin{Thm}\label{FTh.DtN}
The DtN map $\Lambda_{\BZ}$ is well-defined, is a continuous linear operator with respect to the datum $\Bf$ and is an analytic function of $\BZ$ in the open set $(\bbC^{+})^{4}$. Moreover, the operator $\Lambda_{\BZ}$ satisfies
\begin{equation}\label{Feq.pos}
\operatorname{Im }\left\langle\Lambda_{\BZ}\, \Bf, \overline{\Bf} \right\rangle > 0, \ \forall \Bf \in  H^{-\frac{1}{2}}(\Divop,\partial\Omega)-\{0\},
\end{equation}
and as an immediate consequence, the function
\begin{equation}\label{Feq.herg} h_{\Bf}(\BZ)=   \left\langle  \Lambda_{\BZ} \,\Bf, \overline{\Bf} \right\rangle \mbox{ defined on } (\bbC^{+})^{4} \mbox{ for all $\Bf\in H^{-\frac{1}{2}}(\Divop,\partial\Omega)$}\end{equation}
is a Herglotz function%
\index{Herglotz functions}
of $\BZ$ (see Definition \ref{FDef.Herg1}). %and is homogeneous of degree one (see Definition \ref{FDEF.Hom}).
\end{Thm}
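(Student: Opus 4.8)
The plan is to eliminate the magnetic field, recast $(\mathcal{P})$ as a coercive variational problem for $\BE$ alone, and then read off well-posedness, continuity, analyticity and positivity in turn. First I would substitute $\BH=\tfrac{c}{i}(\omega\mu)^{-1}\Curlop\BE$ (from the first field equation) into the second; after dividing by $\omega\neq 0$ this gives the single second-order equation $\Curlop((\omega\mu)^{-1}\Curlop\BE)-c^{-2}(\omega\ep)\BE=0$ in $\Omega$ with $\BE\times\Bn=\Bf$. The crucial observation is that this equation, and hence $\BH$, depends on the data only through $\BZ=(\omega\ep_1,\omega\ep_2,\omega\mu_1,\omega\mu_2)$, so the whole construction is naturally a function of $\BZ$. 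Lifting $\Bf$ to some $\BZ$-independent $\BE_0\in H(\Curlop,\Omega)$ with $\BE_0\times\Bn=\Bf$ and writing $\BE=\BE_0+\Bu$ with $\Bu\in H_0(\Curlop,\Omega)$, the weak form is $a_{\BZ}(\Bu,\Bv)=\ell(\Bv)$ for all $\Bv\in H_0(\Curlop,\Omega)$, where
$$a_{\BZ}(\Bu,\Bv)=\int_\Omega\big[(\omega\mu)^{-1}\Curlop\Bu\cdot\Curlop\overline{\Bv}-c^{-2}(\omega\ep)\,\Bu\cdot\overline{\Bv}\big]\,\dint\Bx.$$

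Next I would establish coercivity of $a_{\BZ}$ directly from passivity, which here replaces the usual Fredholm/compactness analysis. For a scalar $w\in\bbC^{+}$ one has $\operatorname{Im}(w^{-1})=-\operatorname{Im}(w)/|w|^2<0$; since (\ref{Feq.pass}) gives $\operatorname{Im}(\omega\mu)>0$ and $\operatorname{Im}(\omega\ep)>0$, the quantity $-\operatorname{Im}a_{\BZ}(\Bu,\Bu)=\int_\Omega[-\operatorname{Im}((\omega\mu)^{-1})|\Curlop\Bu|^2+c^{-2}\operatorname{Im}(\omega\ep)|\Bu|^2]\,\dint\Bx$ has both coefficients bounded below by positive constants (the moduli take only finitely many values). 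Hence $|a_{\BZ}(\Bu,\Bu)|\geq-\operatorname{Im}a_{\BZ}(\Bu,\Bu)\geq C\|\Bu\|_{H(\Curlop,\Omega)}^2$ with $C>0$, so the Lax--Milgram theorem yields a unique $\Bu$ depending boundedly on $\Bf$. Recovering $\BH=\tfrac{c}{i}(\omega\mu)^{-1}\Curlop\BE$, which lies in $H(\Curlop,\Omega)$ since $\Curlop\BH=-ic^{-1}(\omega\ep)\BE\in L^2(\Omega)$, and taking its tangential trace shows that $\Lambda_{\BZ}\Bf=i\,\Bn\times(\BH\times\Bn)$ is well-defined and continuous from $H^{-\frac12}(\Divop,\partial\Omega)$ into $H^{-\frac12}(\Curlop,\partial\Omega)$.

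For analyticity in $\BZ$, I would note that $a_{\BZ}$ is linear in each $\omega\ep_j$ and depends on each $\omega\mu_j$ only through $(\omega\mu_j)^{-1}$, analytic on $\bbC^{+}$; thus the operator $A_{\BZ}\in L(H_0(\Curlop,\Omega),H_0(\Curlop,\Omega))$ associated with $a_{\BZ}$ through the Riesz map is analytic in each component of $\BZ$ separately. Coercivity makes $A_{\BZ}$ an isomorphism, so Theorem \ref{FTh.analyinvop} gives analyticity of $A_{\BZ}^{-1}$ in each variable, and this passes to the solution map and to $\Lambda_{\BZ}$. Joint analyticity then follows from Hartogs' Theorem (Theorem \ref{FThm.Hartog}) with $E=L(H^{-\frac12}(\Divop,\partial\Omega),H^{-\frac12}(\Curlop,\partial\Omega))$; alternatively one may verify weak analyticity of $\BZ\mapsto\langle\Lambda_{\BZ}\Bf,\psi\rangle$ on dense sets and invoke Theorem \ref{Th.opanalytic}.

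Finally, the positivity (\ref{Feq.pos}) is an energy balance, exactly parallel to the layered case. Applying Green's identity (\ref{Feq.duality}) to the pair $(\overline{\BE},\BH)$ gives $\langle\Bn\times(\BH\times\Bn),\overline{\BE}\times\Bn\rangle=\int_\Omega(\overline{\BE}\cdot\Curlop\BH-\BH\cdot\Curlop\overline{\BE})\,\dint\Bx$; substituting $\Curlop\BH=-ic^{-1}(\omega\ep)\BE$ and $\Curlop\overline{\BE}=-ic^{-1}\overline{\omega\mu}\,\overline{\BH}$ and using $\overline{\BE}\cdot\BE=|\BE|^2$, one finds
$$\langle\Lambda_{\BZ}\Bf,\overline{\Bf}\rangle=\frac{1}{c}\int_\Omega\big[(\omega\ep)|\BE|^2-\overline{\omega\mu}\,|\BH|^2\big]\,\dint\Bx,$$
whose imaginary part is $\tfrac1c\int_\Omega[\operatorname{Im}(\omega\ep)|\BE|^2+\operatorname{Im}(\omega\mu)|\BH|^2]\,\dint\Bx\geq0$ by (\ref{Feq.pass}), with equality only if $\BE\equiv\BH\equiv0$, i.e. $\Bf=\BE\times\Bn=0$. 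This establishes (\ref{Feq.pos}), and together with the analyticity above it shows that $h_{\Bf}(\BZ)=\langle\Lambda_{\BZ}\Bf,\overline{\Bf}\rangle$ is Herglotz in the sense of Definition \ref{FDef.Herg1} with $\mathcal{T}=(\bbC^{+})^4$. The main obstacle is not a deep estimate but the bookkeeping of the trace framework of \citeAPY{Kirsch:2015:MTT}: one must justify the lifting of $\Bf$, confirm that $\BH$ genuinely belongs to $H(\Curlop,\Omega)$ so that its tangential trace is legitimate, and check that the duality pairing in the energy identity is the one defining $\Lambda_{\BZ}$; the coercivity itself is easy precisely because the complex frequency together with passivity removes the usual Maxwell resonance difficulty.
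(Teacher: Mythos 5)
Your proposal is correct and follows essentially the same route as the paper's proof: lifting of the boundary datum, a coercive sesquilinear form whose coercivity comes directly from passivity via Lax--Milgram, analyticity of the inverse operator in each variable combined with Hartogs' theorem, and positivity from the Green's identity/Poynting energy balance. The only differences are an immaterial rescaling of the sesquilinear form by $-c^{-2}$ and phrasing the operator $A_{\BZ}$ via the Riesz map rather than the duality pairing.
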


\begin{Rem}
A similar theorem is obtained in the previous chapter of this book for a DtN map defined as the operator which maps the tangential electric field $\Bn \times (\BE \times \Bn)$ to $i\, \Bn \times \BH$ on $\partial \Omega$ . But for a regular boundary $\partial \Omega$ (for example $C^{1,1}$), this other definition of the DtN map can be rewritten as $-Q  \Lambda_{\BZ} \, Q$ with the isomorphism  $Q: H^{-\frac{1}{2}}(\Curlop,\partial\Omega)\to H^{-\frac{1}{2}}(\Divop,\partial\Omega) $ defined by
$
Q(\Bg)=- \Bn \times \Bg$.  Thus, one can show in the same way that the function $$h_{\Bg} (\BZ)=\left\langle \overline{\Bg}  , -Q  \Lambda_{\BZ}\, Q\,\Bg \right\rangle, \, \forall \Bg\in H^{-\frac{1}{2}}(\Curlop,\partial\Omega) $$ is a Herglotz function on $(\bbC^{+})^{4}$.
But, as it is mentioned in Remark 1, p.\ 30 and Corollary 2, p.\ 38 of \citeAPY{Cessenat:1996:MME}, the isomorphism $Q$ may not be well-defined if the function $\Bn$ is not regular enough.
That is why, in order to make this connection, we assume that the boundary $\partial \Omega$ is slightly more regular than Lipschitz continuous.
\end{Rem}
%%%%%%%%%%%%%%%%%%%%%%%%%%%%%%%%%%%%%%%%%%%%%%%%%%%%%%%%%%%%%%%%%%%
\subsection{Proof of the Theorem \ref{FTh.DtN}}\label{sec.proof}
%%%%%%%%%%%%%%%%%%%%%%%%%%%%%%%%%%%%%%%%%%%%%%%%%%%%%%%%%

We will first prove that the linear operator $T_{\BZ}:H^{-\frac{1}{2}}(\Divop,\partial\Omega)\to H(\Curlop,\Omega)^2 $ which associates the data $\Bf$ to the solution  $(\BE, \BH)\in H(\Curlop,\Omega)^2$ of $(\mathcal{P})$ is well-defined, continuous and analytic in $\BZ$ in $(\bbC^{+})^{4}$. In other words that the problem $(\mathcal{P})$ admits a unique solution $(\BE, \BH)$ which depends continuously on the data $\Bf$ and analytically on $\BZ$. The approach we follow is standard, it uses a variational reformulation of the time-harmonic Maxwell's equations $(\mathcal{P})$ (see \citeAY{Cessenat:1996:MME};
\citeAY{Kirsch:2015:MTT}; \citeAY{Monk:2003:FEM}; \citeAY{Nedelec:2001:AEE}).

The first step is to introduce a lifting of the boundary data%
\index{lifting of boundary data}
$\Bf$. As $\Bf\in H^{-\frac{1}{2}}(\Divop,\partial\Omega)$, there exists (see Theorem 5.24 of \citeAY{Kirsch:2015:MTT}) a continuous lifting operator $R: H^{-\frac{1}{2}}(\Divop,\partial\Omega)\to H(\Curlop,\Omega)$ such that
\begin{equation}\label{eq.oplift}
R(\Bf)=\BE_0,
\end{equation}
that is a field $\BE_0\in H(\Curlop, \Omega)$ which depends continuously on $\Bf$ such that $\BE_0 \times \Bn=\Bf$ on $\partial \Omega$.
Thus, the field $\tilde{\BE}=\BE-\BE_0$ satisfies the following problem with homogeneous boundary condition:
\begin{equation*}\label{FTM1}
(\tilde{\mathcal{P}})\left\{ \begin{array}{ll}
\Curlop \tilde{\BE}-i  \omega \mu \,  c^{-1}\BH= -\Curlop \BE_0& \mbox{in }  \Omega, \\[10pt]
\Curlop \BH +i  \omega \ep \, c^{-1} \tilde{\BE}= -i  \omega \ep \,  c^{-1}  \BE_0& \mbox{in }  \Omega, \\[10pt]
\tilde{\BE}\times \Bn=0 &  \mbox{on }  \partial \Omega.
\end{array} \right.
\end{equation*}

Now multiplying the second Maxwell's equation%
\index{Maxwell's equations}
of $(\tilde{\mathcal{P}})$ by a test function $\BGy \in H_{0}(\Curlop,\Omega)$, integrating by parts and then eliminating the unknown $\BH$ by using  the first Maxwell's equation, we get the following variational formula%
\index{variational formula}%
\index{Maxwell's equations!variational formula}
for the electrical field $\tilde{\BE}$:
\begin{equation}\label{Feq.var}
\int_{\Omega} - c^2 \, (\mu \omega)^{-1} \, \Curlop  \tilde{\BE} \cdot  \overline{\Curlop  \BGy} + \omega \ep \,  \tilde{\BE} \cdot \overline{ \BGy }\, \dint \Bx= \int_{\Omega} c^2 \, (\mu \omega)^{-1}\Curlop  \BE_0 \cdot \overline{ \Curlop  \BGy} -\omega \ep  \,   \BE_0 \cdot \overline{ \BGy} \,\dint \Bx,
\end{equation}
satisfied by all $\BGy \in H_{0}(\Curlop,\Omega)$.
The variational formula (\ref{Feq.var}) and the problem $(\mathcal{P})$ are equivalent.
\begin{Pro}\label{FPro.eq}
$\tilde{\BE}\in H_{0}(\Curlop,\Omega)$ is a solution of the variational formulation (\ref{Feq.var}) if and only if $\big(\BE=\tilde{\BE} + \BE_0, \BH=c \,(i\mu \omega)^{-1}\Curlop (\tilde{\BE} + \BE_0 ) \big )\in H(\Curlop,\Omega)^2$ satisfy the problem $(\mathcal{P})$.
\end{Pro}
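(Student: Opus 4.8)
The plan is to prove the equivalence by establishing the two implications separately; this is the standard passage between the weak (variational) and strong (PDE) formulations, and in both directions the engine is Green's identity (\ref{Feq.duality}) together with the fact that $\tilde{\BE}$ and $\BE_0$ recombine into $\BE$. Throughout I keep in mind that $\ep$ and $\mu$ are scalar and that $\operatorname{Im}\omega>0$ forces $\omega\mu,\omega\ep\neq 0$, so the coefficients $(\mu\omega)^{-1}$, $(i\mu\omega)^{-1}$ and $\omega\ep$ are well-defined, nonzero, and commute with everything. A small but pervasive bookkeeping point is that (\ref{Feq.duality}) is bilinear while (\ref{Feq.var}) is sesquilinear; I handle this by always feeding the conjugate field $\overline{\BGy}$ (which again lies in $H_{0}(\Curlop,\Omega)$) into (\ref{Feq.duality}) and using $\Curlop\overline{\BGy}=\overline{\Curlop\BGy}$.

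\textbf{Forward direction.} Suppose $\tilde{\BE}\in H_{0}(\Curlop,\Omega)$ solves (\ref{Feq.var}), set $\BE=\tilde{\BE}+\BE_0$ and \emph{define} $\BH=c(i\mu\omega)^{-1}\Curlop\BE$. Then $\BE\in H(\Curlop,\Omega)$; the first equation of $(\mathcal{P})$ holds by the very definition of $\BH$; and the boundary condition $\BE\times\Bn=\tilde{\BE}\times\Bn+\BE_0\times\Bn=\Bf$ holds because $\tilde{\BE}\in H_{0}(\Curlop,\Omega)$ and $\BE_0\times\Bn=\Bf$. Moving the right-hand side of (\ref{Feq.var}) to the left and recombining $\tilde{\BE}+\BE_0=\BE$ collapses the identity to
\[
\int_{\Omega} -c^2(\mu\omega)^{-1}\,\Curlop\BE\cdot\overline{\Curlop\BGy} + \omega\ep\,\BE\cdot\overline{\BGy}\,\dint\Bx = 0, \quad \forall\,\BGy\in H_{0}(\Curlop,\Omega).
\]
Multiplying by the nonzero constant $i/c$ and substituting $\BH=c(i\mu\omega)^{-1}\Curlop\BE$ turns the first term into $\BH\cdot\overline{\Curlop\BGy}$, giving $\int_\Omega\BH\cdot\overline{\Curlop\BGy}+i\omega\ep c^{-1}\BE\cdot\overline{\BGy}\,\dint\Bx=0$. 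Testing with $\BGy=\overline{\BGvf}$ for arbitrary $\BGvf\in C_c^\infty(\Omega)^3$ identifies the distributional curl of $\BH$ as $\Curlop\BH=-i\omega\ep c^{-1}\BE\in L^2(\Omega)$. This is the crux: it simultaneously shows $\BH\in H(\Curlop,\Omega)$ (the regularity not available a priori) and yields the second equation of $(\mathcal{P})$.

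\textbf{Reverse direction.} Suppose $(\BE,\BH)$ of the stated form satisfy $(\mathcal{P})$. First $\tilde{\BE}=\BE-\BE_0\in H_{0}(\Curlop,\Omega)$, since $\tilde{\BE}\times\Bn=\Bf-\Bf=0$. Dotting the second equation of $(\mathcal{P})$ with $\overline{\BGy}$, $\BGy\in H_{0}(\Curlop,\Omega)$, integrating, and integrating the $\Curlop\BH$ term by parts through (\ref{Feq.duality}) applied to $\Bu=\BH$, $\Bv=\overline{\BGy}$ (the boundary term vanishes because $\overline{\BGy}\times\Bn=0$) gives $\int_\Omega \BH\cdot\overline{\Curlop\BGy}+i\omega\ep c^{-1}\BE\cdot\overline{\BGy}\,\dint\Bx=0$; here $\BH\in H(\Curlop,\Omega)$ holds by hypothesis, so no regularity issue arises. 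Substituting $\BH=c(i\mu\omega)^{-1}\Curlop\BE$, multiplying by the nonzero constant $-ic$, and splitting $\BE=\tilde{\BE}+\BE_0$ (sending the $\BE_0$ terms to the right) reproduces (\ref{Feq.var}) verbatim.

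The main obstacle is the forward-direction regularity gain: having only defined $\BH$ as a multiple of $\Curlop\BE\in L^2(\Omega)$, one must upgrade it to $H(\Curlop,\Omega)$, and this is exactly what the variational identity delivers when tested against compactly supported fields. Everything else is the reversible algebra above, the only care being the consistent treatment of complex conjugation when passing between the sesquilinear form (\ref{Feq.var}) and the bilinear Green identity (\ref{Feq.duality}).
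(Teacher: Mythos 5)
Your proof is correct and is exactly the standard weak--strong equivalence argument that the paper itself does not write out but delegates to Lemma 4.29 of Kirsch--Hettlich: the forward direction recovers the second Maxwell equation (and the a priori missing regularity $\Curlop\BH\in L^2(\Omega)$) by testing against smooth compactly supported fields, and the reverse direction is Green's identity with vanishing boundary term. The algebra with the constants $c$, $i/c$, $-ic$ and the handling of conjugation between the sesquilinear form and the bilinear duality pairing all check out, so this fills in precisely the details the paper omits.
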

\begin{proof}
This proof is standard. For more details, we refer to the demonstration of the lemma 4.29 in \citeAPY{Kirsch:2015:MTT}.
\end{proof}
For all $\BZ \in (\bbC^+)^4$, we introduce  the sesquilinear form:
$$
a_{\BZ}(\BGf, \BGy)=\int_{\Omega} - c^2(\mu \omega)^{-1} \, \Curlop  \BGf \cdot \overline{\Curlop  \BGy} + \omega \ep \,  \BGf \cdot \overline{\BGy} \, \dint \Bx,
$$
defined on $H_{0}(\Curlop,\Omega)^2$.
One easily proves by using the Cauchy--Schwarz inequality%
\index{Cauchy--Schwarz inequality}
that:
\begin{equation}\label{Feq.bilin}
|a_{\BZ}(\BGf, \BGy)|\leq \max\big( c^2\left\| (\omega\mu)^{-1}\right\|_{\infty},\left\| \omega \ep\right\|_{\infty}\big)\, \left\|\BGf \right\|_{H(\Curlop,\Omega)} \,\left\|\BGy \right\|_{H(\Curlop,\Omega)},
\end{equation}
where $\left\|\cdot\right\|_{\infty}$ denotes the $L^{\infty}$ norm.
Thus, $a_{\BZ}$ is continuous and as such it allows us to define a continuous linear operator $A_{\BZ}\in L(H_0(\Curlop,\Omega), H_0(\Curlop,\Omega)^{*})$ by
\begin{equation}\label{Feq.defopAZ} \left\langle  A_{\BZ} \BGf,\BGy\right\rangle_{ H_0(\Curlop,\Omega)}=a_{\BZ}(\BGf, \overline{\BGy}), \ \forall  \BGf, \BGy \in H_0(\Curlop,\Omega),
\end{equation}
where $\left\langle \cdot , \cdot\right\rangle_{ H_0(\Curlop,\Omega)}$ stands for the duality product between $H_0(\Curlop,\Omega) $ and its dual $H_0(\Curlop,\Omega) ^{*}$.
We now introduce the antilinear form $l_{\BZ}(\BE_0)(\cdot)$:
$$
l_{\BZ}(\BE_0)(\BGy)=\int_{\Omega} c^2 \, (\mu \omega)^{-1}\Curlop  \BE_0 \cdot \overline{ \Curlop  \BGy} -\omega \ep \, \BE_0 \cdot  \overline{ \BGy} \,\dint \Bx, \, \forall \BGy \in H_0(\Curlop,\Omega).
$$
In the same way as (\ref{Feq.bilin}), one can easily check:
$$
|l_{\BZ}(\BE_0)(\BGy) | \leq \max\big(c^2\,\left\| (\omega\mu)^{-1}\right\|_{\infty},\left\| \omega \ep\right\|_{\infty}\big)\, \left\| \BE_0  \right\|_{H(\Curlop,\Omega)} \,\left\|  \BGy \right\|_{H(\Curlop,\Omega)}.
$$
Hence, the linear operator $L_{\BZ}:H(\Curlop,\Omega)\to H_0(\Curlop,\Omega)^{*}$ defined by
\begin{equation}\label{Feq.lin}
\left\langle  L_{\BZ} \BE_0,\BGy\right\rangle_{ H_0(\Curlop,\Omega)}=l_{\BZ}(\BE_0)( \overline{\BGy}), \ \forall  \BE_0 \in H(\Curlop,\Omega)\mbox{ and }\forall \, \BGy \in H_0(\Curlop,\Omega),
\end{equation}
is well-defined and continuous.
Thus, we deduce from the relations (\ref{Feq.bilin}) and (\ref{Feq.lin}) that the variational formula  (\ref{Feq.var}) is equivalent to solve the following infinite dimensional linear system
\begin{equation}\label{Feq.syst}
A_{\BZ}\, \tilde{\BE} = L_{\BZ}\, \BE_0.
\end{equation}
\begin{Pro}\label{Fprop.invAZ}
If $\BZ\in (\bbC^{+})^{4}$, then the operator $A_{\BZ}$ is an isomorphism from $H_0(\Curlop,\Omega)$ to $H_0(\Curlop,\Omega)^{*}$ and its inverse $A_{\BZ}^{-1}$ depends analytically on $\BZ$ in $(\bbC^{+})^{4}$.
\end{Pro}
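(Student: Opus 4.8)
The plan is to prove the isomorphism property by showing that $a_{\BZ}$ is coercive, and then to deduce analyticity of $A_{\BZ}^{-1}$ from Theorem \ref{FTh.analyinvop}. The crucial point is that, although $a_{\BZ}$ is strongly indefinite, passivity makes its imaginary part control the full $H(\Curlop,\Omega)$ norm.

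First I would compute the form on the diagonal: for $\BGf\in H_0(\Curlop,\Omega)$,
\[
a_{\BZ}(\BGf,\BGf)=\int_{\Omega}-c^2(\omega\mu)^{-1}\,|\Curlop\BGf|^2+\omega\ep\,|\BGf|^2\,\dint\Bx ,
\]
where both $|\Curlop\BGf|^2$ and $|\BGf|^2$ are nonnegative. The passivity assumption (\ref{Feq.pass}) gives $\operatorname{Im}(\omega\ep)>0$, and since $\operatorname{Im}(\omega\mu)>0$ yields $\operatorname{Im}\big((\omega\mu)^{-1}\big)=-\operatorname{Im}(\omega\mu)/|\omega\mu|^2<0$, we also obtain $\operatorname{Im}\big(-c^2(\omega\mu)^{-1}\big)>0$. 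As $\omega\ep$ and $\omega\mu$ are piecewise constant, taking only the finitely many values appearing in $\BZ$, these two imaginary parts are bounded below on $\Omega$ by some $\gamma=\gamma(\BZ)>0$; hence
\[
\operatorname{Im}\big(a_{\BZ}(\BGf,\BGf)\big)\ \geq\ \gamma\,\|\BGf\|_{H(\Curlop,\Omega)}^2 ,\qquad\forall\,\BGf\in H_0(\Curlop,\Omega).
\]

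The second step converts this into the classical setting. The rotated form $-i\,a_{\BZ}$ is still bounded, by the continuity estimate (\ref{Feq.bilin}), and satisfies $\operatorname{Re}\big(-i\,a_{\BZ}(\BGf,\BGf)\big)=\operatorname{Im}\big(a_{\BZ}(\BGf,\BGf)\big)\geq\gamma\,\|\BGf\|_{H(\Curlop,\Omega)}^2$, so it is coercive in the usual sense on the Hilbert space $H_0(\Curlop,\Omega)$. By the Lax--Milgram theorem the operator it induces into $H_0(\Curlop,\Omega)^*$ is an isomorphism, and therefore so is $A_{\BZ}$ defined in (\ref{Feq.defopAZ}). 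For analyticity I would first note that $\BZ\mapsto A_{\BZ}$ is analytic. Writing the coefficients on the two material regions $\Omega_1,\Omega_2$ as $\omega\ep=\chi_{\Omega_1}Z_1+\chi_{\Omega_2}Z_2$ and $(\omega\mu)^{-1}=\chi_{\Omega_1}Z_3^{-1}+\chi_{\Omega_2}Z_4^{-1}$, one has
\[
A_{\BZ}=Z_1B_1+Z_2B_2-c^2Z_3^{-1}C_1-c^2Z_4^{-1}C_2 ,
\]
with fixed operators $B_1,B_2,C_1,C_2\in L\big(H_0(\Curlop,\Omega),H_0(\Curlop,\Omega)^*\big)$ arising from the constant sesquilinear forms on each region. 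Each scalar coefficient is analytic on $(\bbC^+)^4$ — the $Z_j$ are linear, and $Z_3^{-1},Z_4^{-1}$ have no poles because their arguments lie in $\bbC^+$ and are never zero — so $A_{\BZ}$ is analytic into $L\big(H_0(\Curlop,\Omega),H_0(\Curlop,\Omega)^*\big)$. Since the preceding steps show $A_{\BZ}$ is an isomorphism for every $\BZ\in(\bbC^+)^4$, Theorem \ref{FTh.analyinvop} (applied with $n=4$) immediately gives that $\BZ\mapsto A_{\BZ}^{-1}$ is analytic on $(\bbC^+)^4$.

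The main obstacle is the coercivity estimate. Because the curl--curl term enters $a_{\BZ}$ with the opposite sign to the mass term, neither the real part of $a_{\BZ}$ nor either piece of the form on its own is coercive, so standard energy arguments fail. What rescues the proof is that passivity forces the imaginary parts of $-c^2(\omega\mu)^{-1}$ and $\omega\ep$ to be \emph{simultaneously} positive, so it is $\operatorname{Im} a_{\BZ}$, not $\operatorname{Re} a_{\BZ}$, that dominates the $H(\Curlop,\Omega)$ norm; the rotation by $-i$ then reduces the problem to a textbook application of Lax--Milgram.
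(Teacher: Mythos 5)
Your proof is correct and follows the same overall skeleton as the paper's: passivity forces $\operatorname{Im}\big(a_{\BZ}(\BGf,\BGf)\big)\geq\alpha\|\BGf\|_{H(\Curlop,\Omega)}^2$, Lax--Milgram then gives the isomorphism (the paper uses the complex form of coercivity $|a_{\BZ}(\BGf,\BGf)|\geq\operatorname{Im}\big(a_{\BZ}(\BGf,\BGf)\big)\geq\alpha\|\BGf\|^2$ directly, which is equivalent to your rotation by $-i$), and Theorem \ref{FTh.analyinvop} converts analyticity of $A_{\BZ}$ into analyticity of $A_{\BZ}^{-1}$. The one place you genuinely diverge is in establishing that $\BZ\mapsto A_{\BZ}$ is analytic: the paper checks separate analyticity of the scalar functions $\BZ\mapsto a_{\BZ}(\BGf,\overline{\BGy})$, upgrades this to operator-norm analyticity in each variable via Theorem \ref{Th.opanalytic}, and then invokes Hartogs' Theorem \ref{FThm.Hartog} for joint analyticity; you instead write the explicit decomposition $A_{\BZ}=Z_1B_1+Z_2B_2-c^2Z_3^{-1}C_1-c^2Z_4^{-1}C_2$ with fixed operators $B_j,C_j$, so that joint analyticity is immediate from the analyticity of the four scalar coefficients on $(\bbC^{+})^4$. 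Your route is more elementary and bypasses both the weak-to-strong analyticity theorem and Hartogs for this step; it works precisely because the medium is piecewise constant with finitely many phases, so $A_{\BZ}$ is a finite sum of analytic scalars times constant operators. The paper's heavier machinery pays off later, in Subsection \ref{subsec.ext}, where the coefficients vary continuously with $\Bx$ and $\omega$ and no such finite decomposition exists, so the weak-analyticity argument is the one that generalizes.
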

\begin{proof}
Let $\BZ$ be in $(\bbC^{+})^{4}$. The invertibility of $A_{\BZ}$ is an immediate consequence of the Lax--Milgram Theorem.%
\index{Lax--Milgram theorem}
Indeed, the coercivity%
\index{coercivity property}
of the sesquilinear form $a_{\BZ}$ derives from the passivity hypothesis (\ref{Feq.pass}) of the material:
\begin{equation*}
|a_{\BZ}(\BGf,\BGf)| \geq  \operatorname{Im} \big( a_{\BZ}(\BGf,\BGf) \big) \geq   \alpha  \left\|\BGf \right\|_{H(\Curlop,\Omega)}^2,  \  \forall \BGf \in H_0(\Curlop,\Omega),
\end{equation*}
where $\alpha=\min\big(\operatorname{Im}( \omega \ep_1) , \operatorname{Im}( \omega \ep_2), - c^2 \operatorname{Im}( \,(\omega \mu_1 )^{-1}),- c^2 \operatorname{Im}(\omega \mu_2 )^{-1})\big)>0$.

Now the analyticity in $\BZ$ of the operator $A_{\BZ}^{-1}$ is proved as follows. First, one can verify easily that for all $\BGf,\Bpsi\in H_0(\Curlop,\Omega)$, the sesquilinear form  $a_{\BZ}(\BGf, \overline{ \Bpsi})$ depends analytically on each component of $\BZ$ when the others are fixed. It follows immediately from this and Theorem \ref{Th.opanalytic} that the operator $A_{\BZ}$ [defined by the relation (\ref{Feq.defopAZ})] is analytic in the operator norm of $L(H_0(\Curlop,\Omega),H_0(\Curlop,\Omega)^{*})$ and hence by Theorem \ref{FThm.Hartog} (Hartogs' Theorem)%
\index{Hartogs' theorem}
it is analytic in $\BZ$ in the open set $(\bbC^{+})^{4}$. Thus, since $A_{\BZ}$ is an isomorphism which depends analytically on $\BZ$ in the open set $(\bbC^{+})^{4}$, then by Theorem \ref{FTh.analyinvop} its inverse $A_{\BZ}^{-1}$ depends analytically on $\BZ$ in $(\bbC^{+})^{4}$.
\end{proof}
\noindent Using Theorem \ref{FThm.Hartog} again, one can easily check in the same way as for the operator $A_{\BZ}$ that the operator $L_{\BZ}$ defined by (\ref{Feq.lin}) is also analytic in $\BZ$ in $(\bbC^{+})^{4}$.
Hence, the variational formula $(\ref{Feq.var})$ admits a unique solution:
\begin{equation}\label{Feq.deftildeE}
\tilde{\BE}=A_{\BZ}^{-1}  L_{\BZ} \,\BE_0=A_{\BZ}^{-1}  L_{\BZ}\, R(\Bf)
\end{equation}
which depends continuously on the data $\Bf$ and analytically on $\BZ$ in $(\bbC^{+})^{4}$.
\begin{Cor}\label{FCor.TZ}
The linear operator $T_{\BZ}:H^{-\frac{1}{2}}(\Divop,\partial\Omega) \to H(\Curlop,\Omega)^2$ which maps the data $\Bf$ to the solution  $(\BE, \BH)\in H(\Curlop,\Omega)^2$ of $(\mathcal{P})$ is well-defined, continuous and depends analytically on $\BZ$ in $(\bbC^{+})^{4}$.
\end{Cor}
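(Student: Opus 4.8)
The plan is to read off $T_{\BZ}$ as an explicit composition of operators already constructed, and then to transfer well-posedness, continuity, and analyticity from its factors. By Proposition~\ref{FPro.eq} the problem $(\mathcal{P})$ is equivalent to the variational problem (\ref{Feq.var}), whose unique solution is $\tilde{\BE}=A_{\BZ}^{-1}L_{\BZ}R(\Bf)$ by (\ref{Feq.deftildeE}). I would therefore simply \emph{define}
\[
T_{\BZ}\Bf=(\BE,\BH),\qquad \BE=(R+A_{\BZ}^{-1}L_{\BZ}R)\Bf,\qquad \BH=c\,(i\mu\omega)^{-1}\Curlop\BE,
\]
and verify the three assertions in turn. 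The heavy analytic machinery is already in Proposition~\ref{Fprop.invAZ} (the isomorphism property of $A_{\BZ}$ and the analyticity of $A_{\BZ}^{-1}$) and in the remark that $L_{\BZ}$ is analytic in $\BZ$; the corollary is then mainly a matter of assembling these facts in the correct spaces.

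For well-definedness, fix $\BZ\in(\bbC^{+})^{4}$: Proposition~\ref{Fprop.invAZ} makes $A_{\BZ}$ an isomorphism, so $\tilde{\BE}$ exists and is unique, and Proposition~\ref{FPro.eq} turns it into the unique pair $(\BE,\BH)$ solving $(\mathcal{P})$. For continuity in $\Bf$, the operators $R$, $L_{\BZ}$, $A_{\BZ}^{-1}$ are all bounded, so $\Bf\mapsto\BE$ is bounded into $H(\Curlop,\Omega)$. To see $\BH\in H(\Curlop,\Omega)$ and bound it, note that $\|\BH\|_{L^{2}(\Omega)}\le C\|\Curlop\BE\|_{L^{2}(\Omega)}\le C\|\BE\|_{H(\Curlop,\Omega)}$, while the second Maxwell equation gives $\Curlop\BH=-i\omega\ep\,c^{-1}\BE\in L^{2}(\Omega)$ with $\|\Curlop\BH\|_{L^{2}(\Omega)}\le C\|\BE\|_{L^{2}(\Omega)}$; hence $\|\BH\|_{H(\Curlop,\Omega)}\le C\|\Bf\|$, so $T_{\BZ}$ is a continuous linear operator into $H(\Curlop,\Omega)^{2}$.

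For analyticity in $\BZ$, treat the two components separately. Since $R$ is $\BZ$-independent and $A_{\BZ}^{-1}$, $L_{\BZ}$ are analytic in the operator norm, and operator composition is continuous and bilinear, the product rule shows $\BZ\mapsto R+A_{\BZ}^{-1}L_{\BZ}R$ is analytic into $L(H^{-\frac{1}{2}}(\Divop,\partial\Omega),H(\Curlop,\Omega))$; this handles the $\BE$ component. For $\BH$, identify $H(\Curlop,\Omega)$ with the closed subspace $\{(\Bu,\Curlop\Bu)\}$ of $L^{2}(\Omega)\times L^{2}(\Omega)$, so that analyticity into $H(\Curlop,\Omega)$ is equivalent to analyticity of both $\BZ\mapsto\BH$ and $\BZ\mapsto\Curlop\BH$ into $L^{2}(\Omega)$. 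The first holds because multiplication by $c\,(i\mu\omega)^{-1}$ is analytic in $\BZ$ (composed with the fixed bounded map $\Curlop\colon H(\Curlop,\Omega)\to L^{2}(\Omega)$ and the analytic $\BE$-operator), and the second holds by writing $\Curlop\BH=-i\omega\ep\,c^{-1}\BE$, where multiplication by $\omega\ep$ is affine (hence analytic) in $\BZ$ and $\BE$ is analytic into $L^{2}(\Omega)$. Combining the two components yields analyticity of $T_{\BZ}$.

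The one genuinely non-bookkeeping point is the $\BH$ component. The naive formula $\BH=c\,(i\mu\omega)^{-1}\Curlop\BE$ only manifestly controls $\BH$ in $L^{2}(\Omega)$, whereas the target is $H(\Curlop,\Omega)^{2}$; the device that resolves this, both for continuity and for analyticity, is to use the Maxwell equation $\Curlop\BH=-i\omega\ep\,c^{-1}\BE$ to express the curl of $\BH$ directly in terms of $\BE$. A secondary point to state carefully is that $\BZ\mapsto(i\mu\omega)^{-1}$ is analytic: because $\mu$ is piecewise constant, this reduces to reciprocals of the coordinates $\omega\mu_{1},\omega\mu_{2}$, and passivity (\ref{Feq.pass}) keeps these in $\bbC^{+}$ and thus away from zero. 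Everything else reduces to the operator product rule (justified by Hartogs' Theorem~\ref{FThm.Hartog} together with the scalar case, or directly) and to Propositions~\ref{FPro.eq} and~\ref{Fprop.invAZ}, so no estimates beyond those already established are needed.
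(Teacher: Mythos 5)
Your proof is correct and follows essentially the same route as the paper's: define $T_{\BZ}$ by the explicit formula $T_{\BZ}(\Bf)=\big(R(\Bf)+\tilde{\BE},\,c\,(i\mu\omega)^{-1}\Curlop(\tilde{\BE}+R(\Bf))\big)$ with $\tilde{\BE}=A_{\BZ}^{-1}L_{\BZ}R(\Bf)$ and transfer well-definedness, continuity, and analyticity from Propositions~\ref{FPro.eq} and~\ref{Fprop.invAZ} and the operators $R$, $L_{\BZ}$, $A_{\BZ}^{-1}$. The only difference is that you spell out the point the paper dismisses as immediate --- namely that the $\BH$-component lands in, and depends analytically into, $H(\Curlop,\Omega)$ rather than merely $L^{2}(\Omega)$, via the second Maxwell equation $\Curlop\BH=-i\omega\ep\,c^{-1}\BE$ --- which is a worthwhile clarification but not a different argument.
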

\begin{proof}
This result is just a consequence of Propositions \ref{FPro.eq} and \ref{Fprop.invAZ}
which prove that the time-harmonic Maxwell's equations $(\mathcal{P})$ admits a unique solution $(\BE, \BH)=T_{\BZ}(\Bf) \in H(\Curlop,\Omega)^2$ for data $\Bf \in H^{-\frac{1}{2}}(\Divop,\partial\Omega)$ where the linear operator $T_{\BZ}$ is defined by the following relation:
\begin{equation}\label{Feq.defTZ}
T_{\BZ}(\Bf)=\big(R(\Bf)+\tilde{\BE},c\, (i\mu \omega)^{-1}\Curlop (\tilde{\BE} + R(\Bf) \big), \ \forall \Bf \in H^{-\frac{1}{2}}(\Divop,\partial\Omega),
\end{equation}
where $\tilde{\BE}=A_{\BZ}^{-1}  L_{\BZ} \, R (\Bf)$ by the relation (\ref{Feq.deftildeE}) and $R$ stands for the lifting operator defined in \eqref{eq.oplift}. With the relation (\ref{Feq.defTZ}), the continuity of $T_{\BZ}$ with respect to $\Bf$ and its analyticity with respect to $\BZ$ follow immediately from the corresponding properties of the operator $A_{\BZ}^{-1}$, $L_{\BZ}$ and $R$.
\end{proof}

We now introduce the tangential component trace operator $\gamma_T: H(\Curlop,\Omega) \to  H^{-\frac{1}{2}}(\Curlop,\partial\Omega) $ defined by:
\begin{equation}\label{Fdef.optrace}
\gamma_T(\BH)=\Bn \times ( \BH \times \Bn)_{\partial \Omega}, \ \forall \BH\in H(\Curlop,\Omega),
\end{equation}
which is continuous (see Theorem 5.24 of \citeAY{Kirsch:2015:MTT}) and the continuous linear operator $P:H(\Curlop,\Omega)^2\to H(\Curlop,\Omega) $ defined by:
\begin{equation}\label{Fdef.opp}
P(\BE,\BH)=\BH, \ \forall \BE, \BH\in H(\Curlop,\Omega).
\end{equation}

This gives us the following operator representation of the electromagnetic DtN map defined in (\ref{Feq.DtN}).
\begin{Pro}(Electromagnetic Dirichlet-to-Neumann map)%
\index{Dirichlet-to-Neumann map!electromagnetism}
The electromagnetic DtN map $\Lambda_{\BZ}:H^{-\frac{1}{2}}(\Divop,\partial\Omega) \to  H^{-\frac{1}{2}}(\Curlop,\partial\Omega)$ is the continuous linear operator defined by the composition of the continuous linear operators $\gamma_T$ in (\ref{Fdef.optrace}), $P$ in (\ref{Fdef.opp}) and $T_{\BZ}$ in (\ref{Feq.defTZ}) by
\begin{equation}\label{Feq.DtNbis}
\Lambda_{\BZ}(\Bf)= i \, \gamma_T \,P \,T_{\BZ}(\Bf), \ \forall \Bf \in H^{-\frac{1}{2}}(\Divop,\partial\Omega).
\end{equation}
\end{Pro}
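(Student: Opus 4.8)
The plan is to recognize that this proposition is essentially a matter of unwinding definitions. Its content is twofold: first, that the explicit composition $i\,\gamma_T\,P\,T_{\BZ}$ reproduces the DtN map $\Lambda_{\BZ}$ as originally introduced in (\ref{Feq.DtN}); and second, that this composition is a continuous linear operator from $H^{-\frac{1}{2}}(\Divop,\partial\Omega)$ into $H^{-\frac{1}{2}}(\Curlop,\partial\Omega)$. Since each of the three factors has already been established to be continuous and linear, there is no analytic difficulty to overcome; the work reduces to a short verification of the operator identity (\ref{Feq.DtNbis}).

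I would first dispose of continuity and linearity, which are immediate. By Corollary \ref{FCor.TZ} the solution operator $T_{\BZ}:H^{-\frac{1}{2}}(\Divop,\partial\Omega)\to H(\Curlop,\Omega)^2$ is a well-defined continuous linear operator; the projection $P:H(\Curlop,\Omega)^2\to H(\Curlop,\Omega)$ of (\ref{Fdef.opp}) is manifestly continuous and linear; and the tangential-component trace $\gamma_T:H(\Curlop,\Omega)\to H^{-\frac{1}{2}}(\Curlop,\partial\Omega)$ of (\ref{Fdef.optrace}) is continuous by Theorem 5.24 of \citeAPY{Kirsch:2015:MTT}. The domains and codomains compose correctly along
$$
H^{-\frac{1}{2}}(\Divop,\partial\Omega)\xrightarrow{T_{\BZ}}H(\Curlop,\Omega)^2\xrightarrow{P}H(\Curlop,\Omega)\xrightarrow{\gamma_T}H^{-\frac{1}{2}}(\Curlop,\partial\Omega),
$$
and a composition of continuous linear operators is again continuous and linear, so $i\,\gamma_T\,P\,T_{\BZ}$ is a continuous linear operator into $H^{-\frac{1}{2}}(\Curlop,\partial\Omega)$, as asserted.

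Next I would verify the identity (\ref{Feq.DtNbis}) by evaluating the composition on an arbitrary datum $\Bf$. By the defining relation (\ref{Feq.defTZ}), $T_{\BZ}(\Bf)=(\BE,\BH)$ is precisely the unique solution pair of problem $(\mathcal{P})$ with boundary data $\Bf$, whose magnetic component is $\BH=c\,(i\mu\omega)^{-1}\Curlop(\tilde{\BE}+R(\Bf))$. Applying $P$ selects this field, $P\,T_{\BZ}(\Bf)=\BH$; applying $\gamma_T$ then takes its tangential component, $\gamma_T\,P\,T_{\BZ}(\Bf)=\Bn\times(\BH\times\Bn)_{\partial\Omega}$; and multiplying by $i$ yields $i\,\Bn\times(\BH\times\Bn)_{\partial\Omega}$, which is exactly $\Lambda_{\BZ}(\Bf)$ according to the original definition (\ref{Feq.DtN}). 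This establishes (\ref{Feq.DtNbis}).

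I do not anticipate a genuine obstacle. The only point that warrants a moment's care is confirming that $P\,T_{\BZ}(\Bf)$ is the physical magnetic field $\BH$ of the solution and not some auxiliary quantity; this is immediate once one recalls that the second slot of $T_{\BZ}$ in (\ref{Feq.defTZ}) was constructed, via the first Maxwell equation of $(\mathcal{P})$, precisely so that $\BH=c\,(i\omega\mu)^{-1}\Curlop\BE$ with $\BE=R(\Bf)+\tilde{\BE}$. Everything else follows from the definitions together with the previously established continuity of the three factors.
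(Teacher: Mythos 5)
Your proof is correct and follows essentially the same route as the paper's: evaluate the composition on an arbitrary $\Bf$, use the fact that $T_{\BZ}(\Bf)=(\BE,\BH)$ solves $(\mathcal{P})$ so that $P\,T_{\BZ}(\Bf)=\BH$ and $i\,\gamma_T\,P\,T_{\BZ}(\Bf)=i\,\Bn\times(\BH\times\Bn)_{\partial\Omega}$ matches (\ref{Feq.DtN}), then deduce continuity and linearity from this representation as a composition of continuous linear operators. The only cosmetic difference is that you verify continuity first and the identity second, while the paper does the reverse; the substance is identical.
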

\begin{proof}
Let $\Bf\in H^{-\frac{1}{2}}(\Divop,\partial\Omega)$. Then $(\BE,\BH)=T_{\BZ}(\Bf)$ is the solution of problem ($\mathcal{P}$). Thus, by definition of $\gamma_T$ and $P$ we have $PT_{\BZ}(\Bf)=\BH$ and hence $i\gamma_TPT_{\BZ}(\Bf)=i\Bn\times(\BH\times\Bn)_{\partial\Omega}$. Therefore, by the definition (\ref{Feq.DtN}) of the DtN map we have $\Lambda_{\BZ}(\Bf)=i\gamma_TPT_{\BZ}(\Bf)$. The fact that $\Lambda_{\BZ}$ is a continuous linear operator follows immediately from this representation. This completes the proof.
\end{proof}

We can now derive the regularity properties of the DtN map $\Lambda_{\BZ}$  by expressing this operator in terms of the
operator $T_{\BZ}$.
\index{Dirichlet-to-Neumann map!well-posedness and regularity}
The analyticity of the DtN map $\Lambda_{\BZ}$ with respect to $\BZ$ in $(\bbC^{+})^4$ is now an immediate consequence of the fact that $\Lambda_{\BZ}$ is the composition of two continuous linear operators $ i\, \gamma_T$ and $P$ independent of $\BZ$ with the continuous linear operator $T_{\BZ}$ which is analytic in $\BZ$ (see Corollary \ref{FCor.TZ}).

Finally, to prove the positivity of $\operatorname{Im}\left\langle\Lambda_{\BZ} \Bf, \overline{\Bf} \right\rangle $, we apply Green's identity%
\index{Green's identity}
(\ref{Feq.duality}) to the solution $(\BE, \BH)$ of the problem $(\mathcal{P})$ for any nonzero data $\Bf \in H^{-\frac{1}{2}}(\Divop,\partial\Omega)$. It yields
$$
i \int_{\Omega} \overline{\BE}\cdot \Curlop \BH - \BH \cdot \overline{\Curlop \BE }\, \dint \Omega=i \left\langle \Bn\times (\BH\times \Bn) , \overline{\BE }\times \Bn \right\rangle=\left\langle\Lambda_{\BZ}\, \Bf,\overline{ \Bf} \right\rangle.
$$
Since $(\BE, \BH)$ is a solution of the time-harmonic Maxwell equations $(\mathcal{P})$, we can rewrite this last relation as:
\begin{equation}\label{Feq.varDtN}
\int_{\Omega} \omega \ep \, c^{-1} \left|\BE\right|^2 -  \overline{ \omega \mu } \, c^{-1} \left|\BH \right|^2 \dint \Bx =\left\langle\Lambda_{\BZ}\, \Bf,\overline{ \Bf} \right\rangle.
\end{equation}
By taking the imaginary part of (\ref{Feq.varDtN}) and using the passivity hypothesis (\ref{Feq.pass}) of the materials which compose the medium $\Omega$, we get the positivity of $\operatorname{Im}\left\langle\Lambda_{\BZ} \Bf,  \overline{\Bf}  \right\rangle $ (\ref{Feq.pos}) (since $(\BE, \BH) \neq (0,0)$ for $\Bf\neq 0$) and it follows immediately that the function $h_{\Bf}$ defined by (\ref{Feq.herg}) is a Herglotz function of $\BZ$. %Finally, the one degree homogeneity of $h_f$ is a straightforward consequence of the relation (\ref{Feq.varDtN}).

%%%%%%%%%%%%%%%%%%%%%%%%%%%%%%%%%%%%%%%%%%%%%%%%%%%%%%%%%%%%%%%%%%%%%%%%%
\subsection{Extensions of Theorem \ref{FTh.DtN} to anisotropic and continuous media}\labsect{subsec.ext}
%%%%%%%%%%%%%%%%%%%%%%%%%%%%%%%%%%%%%%%%%%%%%%%%%%%%%%%%%%%%%%%%%%%%%%%%%
Here we first extend Theorem \ref{FTh.DtN} to the case of a medium $\Omega$ composed by $N$ anisotropic%
\index{anisotropic media}
homogeneous phases. Therefore, the dielectric permittivity $\BGve(\Bx)$ and magnetic permeability $\BGm(\Bx)$ 
are now  $3\times 3$ tensor-valued functions 
of $\Bx$, which take for $j=1,\cdots,N$ the constant values $\BGve_j$ and $\BGm_j$ in the $j$th material. Again, each material is supposed to be passive, in the sense that $\operatorname{Im}(\omega  \BGve_j)$ and $\operatorname{Im}(\omega  \BGm_j)$ have to be positive tensors for all $j=1,\cdots,N$ (see \citeAY{Milton:2002:TOC}, \citeAY{Welters:2014:SLL}, \citeAY{Bernland:2011:SRC}, \citeAY{Gustafsson:2010:SRP}).

First, we  want to emphasize that besides the fact that $\BGve$ and $\BGm$ are now tensor-valued, the time-harmonic Maxwell's equations $(\mathcal{P})$ in $\Omega$ and its associated functional spaces remain the same.
Moreover, as the vector space $M_3(\bbC)$ is isomorphic to $\bbC^{9}$, we prove exactly in the same way that
the DtN map $\Lambda_{\BZ}$ defined by (\ref{Feq.DtN}) is  well-defined, is linear continuous with respect to $\Bf$, and is an 
analytic function with respect to $\BZ$, where $\BZ$ is here the vector of the $18N$ coefficients which are the elements (in some basis) of 
the tensors $\omega \BGve_j$ and $\omega  \BGm_j$ for $j=1,\cdots,N$, in the open set $\mathcal{O}$ of $\bbC^{18N}$ characterized by the passivity relation (\ref{Feq.pass}).
As $\mathcal{O}$ is isomorphic to the open set $(M_3^+(\mathbb{C}))^{2N}$, this is equivalent to say (as it is explained in the last paragraph of the subsection \ref{subsection2.1}) that $\Lambda_{\BZ}$ is an analytic function of the vector $\BZ$, whose components are now those of the permittivity tensors $\omega \BGve_j$ and  permeability $\omega \BGm_j$ (for $j=1,\cdots,N$) in each phase.
Using the passivity assumption%
\index{passivity}
which is associated with the elements of $(M_3^+(\mathbb{C}))^{2N}$, one proves also identically the relation (\ref{Feq.pos}) on the DtN map for all $\BZ\in (M_3^+(\mathbb{C}))^{2N}$. %The one degree homogeneity of the function $h_f$ defined in (\ref{Feq.herg}) is shown in the same way too.

The problem is now to define the notion of a Herglotz function.%
\index{Herglotz functions}
Indeed, when the tensors $\BGve_j$ and $\BGm_j$ of each composite are not all diagonal, it is not
possible anymore to define the DtN map
as a multivariate Herglotz function $h_{\Bf}$ on some copy of the upper-half plane: $(\mathbb{C}^+)^n$. The major obstruction to this construction is based on the simple observation that off-diagonal elements of a matrix in $M_3^+(\mathbb{C})$ will not necessarily have a positive imaginary part.
Nevertheless, it is natural to define a Herglotz function which maps points $\BZ$ represented by a $2N$-tuple of matrices $\BL_1',\BL_2', \ldots \BL_{2N}'$ with positive definite imaginary parts, i.e.,
$\BZ=(\BL_1',\BL_2', \ldots \BL_{2N}')$, to the upper half-plane.

One way to preserve the Herglotz property is to use a trajectory method%
\index{trajectory method}
(see \citeAY{Bergman:1978:DCC} and
Section 18.6 of \citeAY{Milton:2002:TOC}), in other words, consider an analytic function $s\mapsto \BZ(s)$ from $\bbC^{+}$ into $(M_3^+(\mathbb{C}))^{2N}$, i.e., a trajectory in one complex dimension (a surface in two real directions). Then, along this trajectory, we obtain immediately that the function
\begin{equation}\label{eq.Hergfreq} h_{\Bf}(s)=   \,\left\langle  \Lambda_{\BZ(s)} \Bf,\overline{ \Bf} \right\rangle, \ \forall \Bf \in H^{-\frac{1}{2}}(\Divop,\partial\Omega),
\end{equation}
is a Herglotz function (see Definition \ref{FDef.Herg1}) of $s$ in $\bbC^{+}$: analyticity follows from the fact that analyticity
is preserved under composition of analytic functions, while, when $s$ has positive imaginary part, nonnegativity
of the imaginary part of $h_{\Bf}(s)$ follows from the fact that
$\BZ(s)$ lies in the domain where the imaginary part of the operator $\Lambda_{\BZ(s)}$ is positive semi-definite.

 A particularly interesting trajectory for electromagnetism, in an $N$-phase material, is the trajectory
\begin{equation*} s=\Go\to \BZ(\omega)=(\Go\BGve_1(\Go),\Go\BGve_2(\Go),\ldots,\Go\BGve_N(\Go),\Go\BGm_1(\Go),\Go\BGm_2(\Go),\ldots,\Go\BGm_N(\Go)),
\end{equation*}
where $\BGve_j(\Go)$ and $\BGm_j(\Go)$ are the physical electric permittivity tensor and physical magnetic permeability tensor of the actual material constituting phase $i$
as functions of the frequency $\Go$.
Due to the passive nature of these materials the trajectory maps $\Go$ in the upper half plane $\bbC^{+}$ into a trajectory in $(M_3^+(\mathbb{C}))^{2N}$ .
The physical interest about this
trajectory is that one can in principle measure $\Lambda_{\BZ(\omega)}$ along it, at least for real frequencies $\Go$.

In the case of the trajectory method, one can easily generalize Theorem \ref{FTh.DtN}  to continuous anisotropic composites where $\BGve$ and $\BGm$ are matrix-valued functions of both variables $(\Bx,\omega)\in \Omega \times \bbC^{+} $. In this case, we suppose that

\begin{itemize}
\item (H1) For all $\omega \in \bbC^{+}$,  $\BGve(\cdot,\omega)$ and  $\BGm(\cdot,\omega) $ are $L^{\infty}$ matrix-valued functions on $\Omega$ which are locally bounded in the variable $\omega$, in other words, we suppose that there exists $\delta>0$ such that the open ball of center $\omega$ and radius $\delta$: $B(\omega,\delta)$ is included in $\bbC^{+}$ and that
\begin{equation}\label{eq.dom}
\sup_{z \in B(\omega,\delta)} \Vert\BGve(\cdot,z)\Vert_{\infty}<\infty  \mbox{ and }  \sup_{z \in B(\omega,\delta)} \Vert\BGm(\cdot,z)\Vert_{\infty}<\infty
\end{equation}
\item (H2) The composite is passive which implies that for almost every $x\in \Omega$, the functions $\omega \mapsto \omega\BGve(\Bx,\omega)$ and  $\omega\mapsto \omega\BGm(\Bx,\omega)$ are analytic functions from  $\bbC^+ $ to $M_3^+(\mathbb{C})$  (see  Section 11.1 of \citeAY{Milton:2002:TOC}).
\item (H3) For all $ \omega \in \bbC^{+}$, there exists $C_{\omega}>0$ such that
\begin{equation}\label{eq.conscoerc}
\operatorname{ess}\inf \limits_{\Bx\in \Omega}\operatorname{Im}(\omega \BGve(\cdot,\omega)) \geq C_{\omega} \operatorname{Id} \  \mbox{ and } \ \operatorname{ess}\inf \limits_{\Bx \in \Omega}-(\operatorname{Im}(\omega \BGm(\cdot,\omega))^{-1} \geq C_{\omega} \operatorname{Id}.
\end{equation}
\end{itemize}
\begin{Rem}
  The hypotheses (H1) and (H3) may seem complicated but they are satisfied for instance when $\BGve$ and $\BGm$ are continuous functions of both variables $(\Bx,\omega) \in \operatorname{cl}\Omega\times  \mathbb{C}^+$, where $\operatorname{cl}\Omega$ denotes  the closure of $\Omega$. In that case, one can
  see immediately that $(H1)$ is satisfied.
Moreover, if we assume also that the passivity assumption (H2) holds on $\operatorname{cl}\Omega$ (instead of just $\Omega$), the hypothesis (H3) is also satisfied since
the functions $\operatorname{Im}(\omega \BGve(\cdot,\omega))$ and $-(\operatorname{Im}(\omega \BGm(\cdot,\omega)))^{-1}$ are continuous functions on a compact set and thus reach their minimum value which is a positive matrix.
\end{Rem}

Under these hypotheses, Theorem \ref{FTh.DtN} remains valid: the function $h_{\Bf}(\omega)$ given by (\ref{Feq.herg}) is a Herglotz function of the frequency (by interpreting each formula of Section  \ref{Sec.boundedmedia} with $\BZ=\omega\in  \mathbb{C}^+$ as a new analytic variable and $\BGve$ and $\BGm$ as matrix valued functions of the variables $(\Bx,\omega)$). Moreover, the proof is basically the same as the one in Subsection \ref{sec.proof}. We just make precise here
the justification of some technical points which appear when one reproduces this proof in this framework.

We first remark that the assumption (H1) on the tensors $\BGve$ and $\BGm$ implies that the tensors $\omega \BGve(\cdot,\omega)$ and $(\omega \BGm(\cdot,\omega))^{-1}$ are bounded functions of the space variable $\Bx$. Thus the bilinear form $a_{\omega}$ remains continuous and the operators $A_{\omega}$ and $L_{\omega}$ are still well-defined and continuous.
With the coercivity%
\index{coercivity property}
hypothesis (H3), one can easily check that $ \forall \BGf \in H_0(\Curlop,\Omega),$
$$
|a_{\omega}(\BGf,\BGf)|  \geq  C_{\omega}  \left\|\BGf \right\|_{H(\Curlop,\Omega)}^2,
$$
and apply again the Lax--Milgram theorem%
\index{Lax--Milgram theorem}
to show the invertibility of $A_{\omega}$.
Then, the analyticity of the operators $A_{\omega}$ and $L_{\omega}$ with respect to $\omega$ in $\bbC^{+}$ is still obtained (thanks to the relations  (\ref{Feq.defopAZ}) and (\ref{Feq.lin})) from their weak analyticity (see Theorem \ref{Th.opanalytic}).
This weak analyticity is proved by using Theorem \ref{Th.derivsum} to show the analyticity of the integrals which appear in the expression of  $\left\langle  A_{\omega} \BGf, \BGy \right\rangle_{ H_0(\Curlop,\Omega)}$ and  $\left\langle  L_{\omega} \BE_0,\BGy \right\rangle_{ H_0(\Curlop,\Omega)}$ for $\Bphi$, $\Bpsi\in H_0(\Curlop,\Omega) $ and $\BE_0\in H(\Curlop,\Omega) $ (since the assumptions (H1) and (H2) imply the hypotheses of Theorem  \ref{Th.derivsum}).
Then the analyticity of $A_{\omega}^{-1}$ is proved by using again Theorem  \ref{FTh.analyinvop} and the rest of the proof follows by the same arguments as in the isotropic case.

\section{Herglotz functions associated with anisotropic  media}\label{sec.anisopHerg}

A theory of Herglotz functions%
\index{Herglotz functions}
directly defined on tensors and not only on scalar variables is particularly useful
in the domain of bodies containing anisotropic materials such as, for instance, sea ice%
\index{sea ice}
(see \citeAY{Carsey:1992:MRS,Stogryn:1987:GTD,Golden:1995:BCP,Golden:2009:CCM,Gully:2015:BCP}) or in electromagnetism where it will even extend to complicated media such as gyrotropic materials%
\index{gyrotropic materials}
for which the dielectric tensors and magnetic tensors are anisotropic but not symmetric (as there is no reciprocity principle in such media, see \citeAY{Landau:1984:ECM}). The idea for Herglotz representations of the effective moduli of anisotropic materials  was first put forward in the appendix E of \citeAPY{Milton:1981:BTO}, and was studied in depth in Chapter 18 of  \citeAPY{Milton:2002:TOC}, see also \citeAPY{Barabash:1999:SRE}. In connection with sea ice,%
\index{bounds!sea ice}
one is particularly interested in bounds where the moduli
are complex: such bounds are an immediate corollary of appendix E of \citeAPY{Milton:1981:BTO} and series expansions of the effective
conductivity (\citeAY{Willemse:1979:ECP}; \citeAY{Avellaneda:1990:ECA}) that are contingent on assumptions about the polycrystalline geometry,
and more generally are obtained (even for viscoelasticity with anisotropic phases) in \citeAPY{Milton:1987:MCEb} (to make the connection,
see the discussion in Section 15 of the companion paper \cite{Milton:1987:MCEa}), and also see the bounds (16.45) in \citeAPY{Milton:1990:CSP}.
Explicit calculations were made by \citeAPY{Gully:2015:BCP} and are in good agreement with sea ice measurements.

The trajectory method%
\index{trajectory method}
provides the desired representation, as shown in Section 18.8 of \citeAPY{Milton:2002:TOC}. We slightly
modify that argument here. Given any tensors $\BL_j=\Go\BGve_j$ and $\BL_{j+N}=\Go\BGm_j$, for $j=1,2,\ldots N$, which we assume to have strictly positive definite imaginary parts, and given a reference tensor $\BL_0$ which is real, but not necessarily positive definite, define the real matrices
\beq \BA_j  =  \Real[(\BL_0-\BL_j)^{-1}], \quad \BB_j=\Imag[(\BL_0-\BL_j)^{-1}],\quad j=1,2,\ldots,2N,
\eeq{F.001}
where, according to our assumption, $\BB_j$ is positive definite for each $j$. Then consider the trajectory
\beq \BZ(s)=(\BL_1'(s),\BL_2'(s), \ldots \BL_{2N}'(s)), \quad {\rm where}\,\,\BL_j'(s)= \BL_0-(\BA_j+s\BB_j)^{-1}. \eeq{F.002}
Each of the matrices $\BL_j'(s)$ have positive definite imaginary parts when $s$ is in  $\mathbb{C}^+$, and so $ \BZ(s)$ maps $\mathbb{C}^+$
to $(M_3^+(\mathbb{C}))^{2N}$. Furthermore, by construction our trajectory passes through the desired point at $s=i$:
\beq \BZ(i)=(\Go\BGve_1, \Go\BGve_2, \ldots, \Go\BGve_N,  \Go\BGm_1, \Go\BGm_2, \ldots \Go\BGm_N).
\eeq{F.003}
Now $\Lambda_{\BZ(s)}$ is an operator valued Herglotz function of $s$, and so has an integral representation involving a positive semi-definite operator-valued measure deriving
from the values  that $\BZ(s)$ takes when $s$ is just above the positive real axis.  That measure in turn is linearly dependent on the measure
derived from the values that
 $\Lambda_{\BZ(\BL_1',\BL_2', \ldots \BL_{2N}')}$ takes as imaginary parts of $\BL_j'$ become vanishingly small. Thus $\BZ(i)$ can be expressed directly in terms of this latter measure, involving an integral kernel $\BK_{\BL_0}(\BL_1,\BL_2, \ldots \BL_{2N},\BL_1',\BL_2', \ldots \BL_{2N}')$ that is singular with support that is concentrated on the trajectory which is traced by
$(\BL_1'(s),\BL_2'(s), \ldots \BL_{2N}'(s))$ as $s$ is varied along the real axis.  The
formula for $\Lambda_{\BZ(i)}$ obtained from the above prescription can be rewritten (informally) as
\beq \Lambda_{\BZ(\BL_1,\BL_2, \ldots \BL_{2N})}
=\int \BK_{\BL_0}(\BL_1,\BL_2, \ldots \BL_{2N},\BL_1',\BL_2', \ldots \BL_{2N}')\,d\Bm(\BL_1',\BL_2', \ldots \BL_{2N}').
\eeq{F.003a}
An explicit formula could be obtained for the kernel
$\BK_{\BL_0}(\BL_1,\BL_2, \ldots \BL_{2N},\BL_1',\BL_2', \ldots \BL_{2N}')$, which is non-zero except on the
path traced out by $(\BL_1'(s),\BL_2'(s), \ldots \BL_{2N}'(s))$ as $s$ varies over the reals,
and this path depends on $\BL_0$ and the moduli of
$\BL_1,\BL_2, \ldots \BL_{2N}$.
The measure $d\Bm(\BL_1',\BL_2', \ldots \BL_{2N}')$ is derived from the values that
 $\Lambda_{\BZ(\BL_1',\BL_2', \ldots \BL_{2N}')}$ takes as imaginary parts of $\BL_j'$ become vanishingly small.

The trajectory
method has been unjustly criticised for failing to separate the dependence of the function (in this case
$\Lambda_{\BZ(\BL_1,\BL_2, \ldots \BL_{2N})}$) on the moduli (in this case the tensors $\BL_1,\BL_2, \ldots \BL_{2N}$)
from the dependence on the geometry, which is contained in the measure (in this case
derived from the values that $\Lambda_{\BZ(\BL_1',\BL_2', \ldots \BL_{2N}')}$ takes as imaginary parts of $\BL_j'$ become
vanishingly small.) But we see that \eq{F.003a} makes such a separation.

Now there are differences between this representation and standard
representation formulas for multivariate Herglotz functions, but the main difference is that the kernel
$\BK_{\BL_0}(\BL_1,\BL_2, \ldots \BL_{2N},\BL_1',\BL_2', \ldots \BL_{2N}')$, unlike the Szeg{\H{o}} kernel%
\index{Szeg{\H{o}} kernel}
which enters the
polydisk representation%
\index{polydisk representation}
of \citeAPY{Koranyi:1963:HFP}, is singular, being concentrated on this trajectory. However, for each choice of $\BL_0$ there is a representation, each involving a kernel with support on a different trajectory
and so one can average the representations over the matrices $\BL_0$ with
any desired smooth nonnegative weighting, to obtain a family of representations with less singular kernels that are the average
over $\BL_0$ of $\BK_{\BL_0}(\BL_1,\BL_2, \ldots \BL_{2N},\BL_1',\BL_2', \ldots \BL_{2N}')$. This nonuniqueness in the choice of kernel reflects
the fact that the measure satisfies certain Fourier constraints on the polydisk (see \citeAPY{Rudin:1969:FTP}).

Another approach to considering of the notion of Herglotz functions in anisotropic multicomponent media
is as follows. It will consist of proving that $(M_3^+(\mathbb{C}))^{2N}$ is isometrically isomorphic to a tubular domain
(defined below) of $\mathbb{C}^{18N}$ and use it to extend the definition of Herglotz functions via the theory of holomorphic functions on tubular domains with nonnegative imaginary part from \citeAY{Vladimirov:2002:MTG}.

As we have already mentioned in the introduction to this chapter and at the end of Subsection \sect{subsec.ext}, this extended definition is significant because these multivariate functions provide a deep connection to the theory of multivariate passive linear systems%
\index{multivariate passive linear systems}
as described in Section 20 of \citeAY{Vladimirov:2002:MTG}, for instance, and in the study of anisotropic composites (e.g., sea ice or gyrotropic media).  In addition to this, such an extension may allow for a more general approach of the efforts of \citeAPY{Golden:1985:BEP} and \citeAPY{Milton:1990:RCF} to derive integral representations of multivariate Herglotz functions in $(\bbC^{+})^N$, beyond that provided in Section 18.8 of  \citeAPY{Milton:2002:TOC}, or for deriving bounds in the theory of composites using the analytic continuation method (see Chapter 27 in \citeAY{Milton:2002:TOC}).

Let us first introduce the definition of a tubular domain%
\index{tubular domain}
from Chapter 2, Section 9 of \citeNP{Vladimirov:2002:MTG}.
\begin{Def}\label{eq.defTub}
Let $\Gamma$ be a closed convex acute cone in $\bbR^N$ with vertex at ${\bf 0}$. We denote by $C=\operatorname{int}(\Gamma^{*})$, where $\Gamma^{*}$ stands for the dual of $C$ (in the sense of cones' duality) and $\operatorname{int}(\Gamma^{*})$ denotes the (topological) interior of $\Gamma^{*}$. Thus, $C$ is  an open, convex, nonempty cone. Then, a tubular domain in $\mathbb{C}^N$ with base $C$ is defined as:
$$
\mathcal{T}=\bbR^N+iC=\left\{ \Bz=\Bx+i\By | \Bx \in \bbR^N  \mbox{ and } \By \in C\right\}.
$$
\end{Def}

We will now show that $M_N^{+}(\bbC)$ is isometrically isomorphic to a tubular domain $\mathcal{T}^{C}$ of $\bbC^{N^2}$ [see Proposition \ref{prop.euclidanspace}, Proposition \ref{prop.cone}, and Theorem \ref{ThmTubularDomain} below (which we state without proof as they are easily verified)]. Toward this purpose, we first  use the decomposition
$$
\BM=\frac{\BM+\BM^{*}}{2}+i \, \frac{\BM-\BM^{*}}{2i}, \, \forall \BM\in M_N(\bbC),
$$
to parameterize the space $M_N^{+}(\bbC)$ as
$$
M_N^{+}(\bbC)=\left\{\BM_1+i\, \BM_2| \BM_1\in H_N(\bbC) \mbox{ and } \BM_2\in H_N^{+}(\bbC) \right\},
$$
where $H_N(\bbC)$ and $H_N^{+}(\bbC) $ denote the sets of Hermitian and positive definite Hermitian matrices, respectively.

Then, we recall with the two following propositions that $H_N(\bbC)$ is a Euclidean space and $H_N^{+}(\bbC)$ is a cone in $H_N(\bbC)$ with some remarkable properties that we will use to construct the basis $C$ of our tubular region. First, denote the standard orthonormal basis vectors of $\mathbb{R}^N$ by $\Be_k$, $k=1,\ldots, N$. With respect to this basis, let $\BE_{kl}$, $1\leq k,l\leq N$ denote the matrices in $M_N(\mathbb{C})$ such that as operators on $\mathbb{C}^N$ are equal to \begin{equation}
\BE_{kl}=\Be_k\Be_l^{T} \mbox{ for } k,l\in \{1,\ldots,N\},
\end{equation}
i.e., $\BE_{kl}$ is the $N\times N$ matrix with $1$ in the $k$th row, $l$th column and zeros everywhere else.
\begin{Pro}\label{prop.euclidanspace}
The Hermitian matrices $H_N(\bbC)$ endowed with the inner product:
\begin{equation}
(\BA,\BB)_{H_N(\bbC)}=\operatorname{Tr}(\BA\BB),\quad \forall \BA,\BB \in H_N(\bbC)
\end{equation}
is a Euclidean space of dimension $N^2$. Furthermore, an orthonormal basis of this space is given by the matrices:
\begin{gather}\label{eq.orthbasis}
\BE_{kk} ~\mbox{ for }~ k\in \{1,2,3,...,N\},\\
\frac{1}{\sqrt 2}(\BE_{kl}+\BE_{lk}),\, \frac{i}{\sqrt 2}(\BE_{kl}-\BE_{lk}) ~\mbox{ for } l,k\in \{1,2,3,...,N\} \mbox{ such that } l <k.
\end{gather}
Moreover, if we denote by $I_N=\{1,\cdots, N\}$, then the linear map $\phi :H_N(\bbC) \mapsto \bbR^{N^2}$ given by
\begin{equation}\label{eq.isom}
\phi(\BA)=\big((A_{kk})_{k\in I_N},(\sqrt 2\operatorname{Re}A_{kl})_{(k,l)\in I_N^2 \mid k<l},(\sqrt 2\operatorname{Im}A_{kl})_{(k,l)\in I_N^2\mid k<l }  \big)\in \bbR^{N^2}.
\end{equation}
which represents the coordinates of $\BA$ in the orthonormal basis (\ref{eq.orthbasis})
defines an isometry in the sense that
\begin{equation}
(\BA, \BB)_{H_N(\bbC)}= \phi(\BA)\cdot \phi(\BB), \ \forall \BA,\BB\in H_{N}(\bbC),
\end{equation}
for $\cdot$ the standard dot product of $\bbR^{N^2}$.
\end{Pro}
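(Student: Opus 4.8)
The plan is to establish the three assertions in turn: that $\operatorname{Tr}(\BA\BB)$ defines a genuine real inner product on $H_N(\bbC)$, that the listed matrices form an orthonormal basis for it, and that $\phi$ records the coordinates of $\BA$ in this basis, so that the isometry becomes Parseval's identity.

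First I would check that $(\cdot,\cdot)_{H_N(\bbC)}$ is a real inner product on the real vector space $H_N(\bbC)$. Real-valuedness follows from $\operatorname{Tr}(\BM^{*})=\overline{\operatorname{Tr}(\BM)}$ together with cyclicity of the trace: for Hermitian $\BA,\BB$ one has $\overline{\operatorname{Tr}(\BA\BB)}=\operatorname{Tr}((\BA\BB)^{*})=\operatorname{Tr}(\BB\BA)=\operatorname{Tr}(\BA\BB)$. Symmetry is again cyclicity, and $\bbR$-bilinearity is immediate. Positive-definiteness uses that $\BA$ Hermitian gives $\BA^{2}=\BA^{*}\BA$, so $\operatorname{Tr}(\BA^{2})=\sum_{i,j}|A_{ij}|^{2}\ge 0$ with equality only for $\BA=\mathbf{0}$. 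A parameter count ($N$ real diagonal entries, plus $N(N-1)$ real parameters from the off-diagonal entries constrained by $A_{kl}=\overline{A_{lk}}$) gives $\dim_{\bbR}H_N(\bbC)=N^{2}$.

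The heart of the computation is orthonormality. The single identity I would rely on is $\operatorname{Tr}(\BE_{ab}\BE_{cd})=\delta_{bc}\delta_{ad}$, which follows from $\BE_{ab}\BE_{cd}=\delta_{bc}\BE_{ad}$. The listed matrices are all Hermitian and there are exactly $N+2\binom{N}{2}=N^{2}$ of them, so it suffices to verify pairwise orthonormality, which then forces them to be a basis. Expanding the bilinear products and applying the identity handles each case: diagonal--diagonal gives $\delta_{km}$; the product of a diagonal $\BE_{kk}$ with a symmetric or antisymmetric generator for a pair $l\neq k$ vanishes because the surviving Kronecker deltas cannot be satisfied simultaneously; and for two generators attached to pairs $(k,l)$, $(m,n)$ with $l<k$, $n<m$, the cross terms combine so that the symmetric--symmetric and antisymmetric--antisymmetric products each equal $\delta_{(k,l),(m,n)}$ (the factors $\tfrac{1}{2}$ and $i^{2}=-1$ are precisely what normalize these), while the symmetric--antisymmetric product vanishes identically. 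I expect this index bookkeeping---keeping the $\tfrac{1}{\sqrt2}$ and $i$ factors straight and checking which deltas survive under $l<k$, $n<m$---to be the only place demanding genuine care.

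Finally I would identify $\phi(\BA)$ with the coordinate vector of $\BA$ in this orthonormal basis. Using $\operatorname{Tr}(\BA\BE_{kl})=A_{lk}$ one computes $(\BA,\BE_{kk})_{H_N(\bbC)}=A_{kk}$, then $(\BA,\tfrac{1}{\sqrt2}(\BE_{kl}+\BE_{lk}))_{H_N(\bbC)}=\sqrt2\,\Real(A_{lk})$ and $(\BA,\tfrac{i}{\sqrt2}(\BE_{kl}-\BE_{lk}))_{H_N(\bbC)}=\mp\sqrt2\,\Imag(A_{lk})$, so that, up to the harmless relabeling of the unordered pair and a consistent sign on the imaginary coordinates (negating a basis vector leaves it orthonormal and leaves the dot product unchanged), these coordinates are exactly the entries of $\phi(\BA)$ in \eqref{eq.isom}. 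The isometry $(\BA,\BB)_{H_N(\bbC)}=\phi(\BA)\cdot\phi(\BB)$ is then Parseval's identity: in any orthonormal basis the inner product equals the Euclidean dot product of the coordinate vectors. This completes the plan.
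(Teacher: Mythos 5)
Your proof is correct. Note that the paper itself offers no proof of this proposition: it is stated (together with Proposition \ref{prop.cone} and Theorem \ref{ThmTubularDomain}) "without proof as they are easily verified," so there is no argument in the text to compare against; your write-up simply supplies the routine verification the authors omitted, and it does so along the only natural route (checking the trace form is a real inner product, reducing orthonormality to $\operatorname{Tr}(\BE_{ab}\BE_{cd})=\delta_{bc}\delta_{ad}$ plus a dimension count, and invoking Parseval). One point worth keeping explicit in a final write-up is the sign issue you flagged: for the basis element $\tfrac{i}{\sqrt2}(\BE_{kl}-\BE_{lk})$ with $l<k$ the coordinate of $\BA$ is $\sqrt2\operatorname{Im}A_{kl}=-\sqrt2\operatorname{Im}A_{lk}$, which is the negative of the corresponding entry of $\phi(\BA)$ as written in \eqref{eq.isom}; as you say, replacing that basis vector by its negative (equivalently, reading the third family as $\tfrac{i}{\sqrt2}(\BE_{lk}-\BE_{kl})$) fixes the mismatch and leaves both orthonormality and the identity $(\BA,\BB)_{H_N(\bbC)}=\phi(\BA)\cdot\phi(\BB)$ intact, so the isometry claim -- the only part of the proposition used later -- is unaffected.
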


\begin{Pro}\label{prop.cone}
In the Euclidean space $H_N(\mathbb{C})$, denote the closure of $H_N^+(\mathbb{C})$ by $\operatorname{cl}H_N^+(\mathbb{C})$ and its (topological) interior by $\operatorname{int}\left( \operatorname{cl}H_N^+(\mathbb{C})\right)$. Then
\begin{equation}
\operatorname{cl}H_N^+(\mathbb{C})=\{\BM\in M_N(\mathbb{C})|\operatorname{Im}\BM\geq 0\},
\end{equation}
i.e., the set all positive semidefinite (Hermitian) matrices in $M_N(\mathbb{C})$. Furthermore, it is a closed, convex, acute cone with vertex at ${\bf 0}$ and is self-dual (in sense of cones' duality). Moreover, $\operatorname{int}\left(\operatorname{cl}H_N^+(\mathbb{C})\right)=H_N^+(\mathbb{C})$ and it is an open, convex, nonempty cone (in the sense of the definition in Sec.\ 4.4 of \citeAY{Vladimirov:2002:MTG}).
\end{Pro}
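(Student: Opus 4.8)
The plan is to work entirely inside the real Euclidean space $E=H_N(\bbC)$ equipped with the inner product $(\BA,\BB)_{H_N(\bbC)}=\operatorname{Tr}(\BA\BB)$ supplied by Proposition \ref{prop.euclidanspace}, and to exploit throughout the elementary characterization that a Hermitian $\BM$ is positive semidefinite iff $\Bx^{*}\BM\Bx\ge 0$ for every $\Bx\in\bbC^{N}$, and positive definite iff the inequality is strict for $\Bx\ne 0$. Writing $K=H_N^{+}(\bbC)$ for the positive definite Hermitian matrices, I would first identify $\operatorname{cl}K$ with the set of positive semidefinite Hermitian matrices. One inclusion is by taking limits: if $\BM_n\to\BM$ with each $\BM_n\in K$, then $\Bx^{*}\BM\Bx=\lim_n\Bx^{*}\BM_n\Bx\ge 0$ and Hermiticity passes to the limit, so $\BM\ge 0$; the reverse inclusion uses the approximation $\BM+\varepsilon\BI\to\BM$ as $\varepsilon\to 0^{+}$, each $\BM+\varepsilon\BI$ being positive definite whenever $\BM\ge 0$.

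With $\operatorname{cl}K$ so identified, the facts that it is a closed convex cone with vertex at $\mathbf{0}$ are immediate from the pointwise inequalities $\Bx^{*}\BM\Bx\ge 0$: closedness because it is a closure (equivalently an intersection of the closed half-spaces indexed by $\Bx$), and the cone and convexity properties because $\Bx^{*}(\lambda\BM+\nu\BN)\Bx=\lambda\,\Bx^{*}\BM\Bx+\nu\,\Bx^{*}\BN\Bx\ge 0$ for $\lambda,\nu\ge 0$. Acuteness I would establish in the form $\operatorname{cl}K\cap(-\operatorname{cl}K)=\{\mathbf{0}\}$: a matrix that is simultaneously positive and negative semidefinite has all eigenvalues zero, hence equals $\mathbf{0}$, so the cone contains no line.

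The self-duality is the step I expect to carry the most content. The dual cone is $(\operatorname{cl}K)^{*}=\{\BA\in H_N(\bbC)\mid\operatorname{Tr}(\BA\BB)\ge 0\ \forall\,\BB\in\operatorname{cl}K\}$. For $\operatorname{cl}K\subseteq(\operatorname{cl}K)^{*}$ I would invoke the standard fact that the trace of a product of two positive semidefinite matrices is nonnegative, via $\operatorname{Tr}(\BA\BB)=\operatorname{Tr}(\BA^{1/2}\BB\BA^{1/2})\ge 0$, the argument being a congruence of $\BB$ and hence positive semidefinite. For the reverse inclusion I would test against the rank-one matrices $\BB=\Bx\Bx^{*}\in\operatorname{cl}K$: if $\operatorname{Tr}(\BA\BB)\ge 0$ for all $\BB\in\operatorname{cl}K$, then in particular $0\le\operatorname{Tr}(\BA\,\Bx\Bx^{*})=\Bx^{*}\BA\Bx$ for every $\Bx$, whence $\BA\ge 0$ and $\BA\in\operatorname{cl}K$. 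The only care needed is to confirm that the inner product really is the trace form of Proposition \ref{prop.euclidanspace} and that $\operatorname{Tr}(\BA\BB)$ is genuinely real on $H_N(\bbC)$, which follows from $(\BA\BB)^{*}=\BB\BA$ together with $\operatorname{Tr}(\BM^{*})=\overline{\operatorname{Tr}\BM}$.

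Finally, for the interior I would prove $\operatorname{int}(\operatorname{cl}K)=K$ by a two-sided argument. That $K$ is open (hence $K\subseteq\operatorname{int}\operatorname{cl}K$) follows from the perturbation bound $\lambda_{\min}(\BM+\BE)\ge\lambda_{\min}(\BM)-\|\BE\|$, so a positive definite $\BM$ survives small Hermitian perturbations; $K$ is nonempty ($\BI\in K$), and is convex and a cone by the same pointwise inequalities as above. For the opposite inclusion, a positive semidefinite but not positive definite $\BM$ has a null eigenvector $\Bv$, and then $\BM-\varepsilon\Bv\Bv^{*}$ fails to be positive semidefinite for every $\varepsilon>0$ since $\Bv^{*}(\BM-\varepsilon\Bv\Bv^{*})\Bv=-\varepsilon\|\Bv\|^{4}<0$, so $\BM$ is not interior to $\operatorname{cl}K$. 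Assembling these gives $\operatorname{int}(\operatorname{cl}K)=H_N^{+}(\bbC)$, which supplies exactly the hypotheses needed to take $C=\operatorname{int}(\Gamma^{*})$ with $\Gamma=\operatorname{cl}H_N^{+}(\bbC)$ in Definition \ref{eq.defTub}.
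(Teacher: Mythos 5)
Your proof is correct. The paper actually states Proposition \ref{prop.cone} (together with Proposition \ref{prop.euclidanspace} and Theorem \ref{ThmTubularDomain}) explicitly \emph{without proof}, calling these facts easily verified, and your argument supplies exactly the standard verification the authors have in mind: identifying $\operatorname{cl}H_N^{+}(\bbC)$ with the positive semidefinite Hermitian matrices by the $\BM+\varepsilon\BI$ approximation, checking the convex--cone and acuteness properties through the quadratic form, proving self-duality via $\operatorname{Tr}(\BA\BB)=\operatorname{Tr}(\BA^{1/2}\BB\BA^{1/2})\ge 0$ and rank-one test matrices $\Bx\Bx^{*}$, and pinning down the interior with the eigenvalue perturbation bound and the null-vector construction $\BM-\varepsilon\Bv\Bv^{*}$.
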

In particular, it follows immediately from  the Propositions \ref{prop.euclidanspace} and  \ref{prop.cone} that:
\begin{Thm}\label{ThmTubularDomain}
The set $M_N^{+}(\bbC)=H_N(\mathbb{C})+iH_N^+(\mathbb{C})$ is isometrically isomorphic to a tubular domain $\mathcal{T}^C=\mathbb{R}^{N^2}+i \,C$ in $\mathbb{C}^{N^2}$ where $\mathbb{R}^{N^2}$ and $C$ are  respectively defined by the relations $\mathbb{R}^{N^2}=\phi(H_N(\mathbb{C}))$ and $C=\phi(H_N^+(\mathbb{C}))$ for $\phi$ the isometry defined in (\ref{eq.isom}).
\end{Thm}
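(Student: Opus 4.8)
The plan is to produce a single complex-linear isometric isomorphism of the ambient spaces $M_N(\bbC)$ and $\bbC^{N^2}$ whose restriction carries $M_N^+(\bbC)$ bijectively onto $\mathcal{T}^C$, and then to check that $\mathcal{T}^C$ meets Definition \ref{eq.defTub}. First I would complexify the real isometry $\phi$ of Proposition \ref{prop.euclidanspace}. Writing any $\BM\in M_N(\bbC)$ uniquely as $\BM=\BM_1+i\BM_2$ with $\BM_1=\operatorname{Re}\BM=(\BM+\BM^*)/2\in H_N(\bbC)$ and $\BM_2=\operatorname{Im}\BM=(\BM-\BM^*)/(2i)\in H_N(\bbC)$, I would define $\Phi:M_N(\bbC)\to\bbC^{N^2}$ by $\Phi(\BM)=\phi(\BM_1)+i\,\phi(\BM_2)$. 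Because $M_N(\bbC)=H_N(\bbC)\oplus iH_N(\bbC)$ is the complexification of the real vector space $H_N(\bbC)$, and $\phi$ is a real-linear bijection of $H_N(\bbC)$ onto $\bbR^{N^2}$, the map $\Phi$ is a complex-linear isomorphism onto $\bbC^{N^2}$.

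Next I would verify that $\Phi$ is an isometry for the Frobenius inner product $\langle \BA,\BB\rangle=\operatorname{Tr}(\BA\BB^*)$ on $M_N(\bbC)$ and the standard Hermitian inner product on $\bbC^{N^2}$. Expanding $\operatorname{Tr}(\BM\BN^*)$ with $\BN^*=\BN_1-i\BN_2$ and using that $\operatorname{Tr}(\BA\BB)$ is real for Hermitian $\BA,\BB$ gives real and imaginary parts $\operatorname{Tr}(\BM_1\BN_1)+\operatorname{Tr}(\BM_2\BN_2)$ and $\operatorname{Tr}(\BM_2\BN_1)-\operatorname{Tr}(\BM_1\BN_2)$; the identical expansion of $\langle\Phi(\BM),\Phi(\BN)\rangle$ together with the isometry relation $\phi(\BA)\cdot\phi(\BB)=(\BA,\BB)_{H_N(\bbC)}=\operatorname{Tr}(\BA\BB)$ from Proposition \ref{prop.euclidanspace} reproduces these two expressions, so the two inner products agree. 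This is a short routine computation.

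Then I would read off the image. By the parameterization $M_N^+(\bbC)=\{\BM_1+i\BM_2\mid \BM_1\in H_N(\bbC),\ \BM_2\in H_N^+(\bbC)\}$, applying $\Phi$ gives $\Phi(M_N^+(\bbC))=\phi(H_N(\bbC))+i\,\phi(H_N^+(\bbC))=\bbR^{N^2}+iC$ with $C=\phi(H_N^+(\bbC))$, which is exactly $\mathcal{T}^C$. It remains to show $\mathcal{T}^C$ is a tubular domain in the sense of Definition \ref{eq.defTub}, i.e.\ that $C=\operatorname{int}(\Gamma^*)$ for some closed convex acute cone $\Gamma$ with vertex at $\mathbf 0$. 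I would set $\Gamma=\phi(\operatorname{cl}H_N^+(\bbC))$. Since $\phi$ is a linear isometric isomorphism it is a homeomorphism preserving the inner product, hence it carries the closed convex acute cone $\operatorname{cl}H_N^+(\bbC)$ of Proposition \ref{prop.cone} to a cone $\Gamma$ with the same properties, and it commutes with taking interiors, so $\operatorname{int}\Gamma=\phi(\operatorname{int}(\operatorname{cl}H_N^+(\bbC)))=\phi(H_N^+(\bbC))=C$.

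The one point needing genuine (though brief) care is that $\phi$ intertwines cone duality, which is where the self-duality of $\operatorname{cl}H_N^+(\bbC)$ from Proposition \ref{prop.cone} enters. Because $\phi$ preserves inner products, for any cone $K$ one has $(\phi(K))^*=\phi(K^*)$: indeed $\phi(v)$ lies in $(\phi(K))^*$ iff $(\phi(v),\phi(w))\geq 0$ for all $w\in K$, i.e.\ iff $(v,w)_{H_N(\bbC)}\geq 0$ for all $w\in K$, i.e.\ iff $v\in K^*$. Taking $K=\operatorname{cl}H_N^+(\bbC)$ and using self-duality $K^*=K$ yields $\Gamma^*=\phi(K^*)=\phi(K)=\Gamma$, whence $\operatorname{int}(\Gamma^*)=\operatorname{int}\Gamma=C$. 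Thus $\mathcal{T}^C=\bbR^{N^2}+iC$ satisfies Definition \ref{eq.defTub} with base cone $\Gamma$, and $\Phi$ restricts to the required isometric isomorphism of $M_N^+(\bbC)$ onto $\mathcal{T}^C$. The main (modest) obstacle is therefore purely the duality and interior bookkeeping under $\phi$; everything else is immediate from Propositions \ref{prop.euclidanspace} and \ref{prop.cone}.
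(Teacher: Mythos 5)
Your proof is correct and follows exactly the route the paper intends: the paper states Theorem \ref{ThmTubularDomain} without proof, asserting it "follows immediately" from Propositions \ref{prop.euclidanspace} and \ref{prop.cone}, and your argument is precisely the careful filling-in of that deduction (complexifying $\phi$, checking the Frobenius isometry, and using self-duality of $\operatorname{cl}H_N^+(\mathbb{C})$ to verify the base cone condition of Definition \ref{eq.defTub}). No gaps.
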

As the Cartesian product of tubular domains is also a tubular domain, we obtain immediately that the space of tensors $(M_3^+(\mathbb{C}))^{2N}$ associated to a medium $\Omega$ composed of $N$ anisotropic passive composites is isometrically isomorphic to the tubular region $(\mathcal{T}^{C})^{2N}=\mathcal{T}^{C^{2N}}$ (where $\mathcal{T}^{C}$ is the tubular domain defined in the Theorem \ref{ThmTubularDomain}).
Hence, identifying $(M_3^+(\mathbb{C}))^{2N}$ with $\mathcal{T}^{C^{2N}}$ via this isometry allows us to define in Theorem \ref{FTh.DtN} the function
\begin{equation*} h_{\Bf}(\BZ)=   \left\langle  \Lambda_{\BZ} \, \Bf, \overline{\Bf} \right\rangle, \ \forall \Bf \in H^{-\frac{1}{2}}(\Divop,\partial\Omega),  \end{equation*}
on $(M_3^+(\mathbb{C}))^{2N}$ as an Herglotz function of $\BZ$ in the sense that it is an holomorphic function on a tubular domain with a nonnegative imaginary part and it justifies Definition \ref{FDef.Herg1} given in the introduction.

\section*{Acknowledgments}
Graeme Milton is grateful to the National Science Foundation for support through the Research Grant DMS-1211359. Aaron Welters is grateful for the support from the U.S.\ Air Force Office of Scientific Research (AFOSR) through the Air Force's Young Investigator Research Program (YIP) under the grant FA9550-15-1-0086.

\bibliographystyle{mod-xchicago}
%\bibliographystyle{siam}
%\bibliographystyle{/u/ma/milton/tex/mod-xchicago}

%Remember to add this percentage when you use newref, tcbook ref instead.
%\bibliography{/u/ma/milton/tcbook,/u/ma/milton/newref}

%\bibliography{/home/milton/tcbook,/home/milton/newref}

%Remember to remove this when giving back to Milton the draft.
\bibliography{newref,tcbook}

\end{document}